\documentclass[12pt]{article}
\usepackage[backend=bibtex,maxbibnames=99,sorting=nyt,style=numeric,firstinits=true, url=false,doi=false,isbn=false]{biblatex} 
\renewbibmacro{in:}{
  \ifboolexpr{%
     test {\ifentrytype{article}}%
     }{}
  {\printtext{{I}n\intitlepunct}}%
}  

\renewbibmacro*{publisher+location+date}{
  \printlist{publisher}%
    {\setunit*{\addcomma\space}}
    {\setunit*{\addcomma\space}}%
  \iflistundef{location}
    {\setunit*{\addcomma\space}}
    {\setunit*{\addcomma\space}}%
  \printlist{location}%
  \setunit*{\addcomma\space}%
  \usebibmacro{date}%
  \newunit}

\addbibresource{Preprint_Marohn_Tammer_2021a_Risk_Measure_Properties.bib}
\DeclareFieldFormat[book]{volume}{vol. #1 \addcomma \space} 
\DeclareFieldFormat[article]{volume}{#1 \addcomma \space} 
\DeclareFieldFormat{journaltitle}{#1 \space} 
\DeclareFieldFormat{booktitle}{#1 \addcomma \space} 
\DeclareDelimFormat{finalnamedelim}{\addcomma \space}  
\DeclareFieldFormat[article,book,inbook,incollection,inproceedings,thesis,misc]{title}{\textit{#1} \addcomma \space} 
\DeclareDelimFormat{nametitledelim}{\addcomma\space}
\DeclareFieldFormat{edition}{#1 \addcomma \space} 
\DeclareFieldFormat[article,incollection]{pages}{#1} 

\linespread{1.05}
\usepackage[colorlinks=true,citecolor=blue]{hyperref}
\usepackage{mathptmx, amsthm, mathptmx, enumerate, color}
\setlength{\textheight}{21cm}
\setlength{\textwidth}{16cm}
\setlength{\oddsidemargin}{0cm}
\setlength{\evensidemargin}{0cm}
\setlength{\topmargin}{0cm}

\usepackage{epstopdf}

\usepackage{graphicx}
\usepackage{mathtools,amsmath,amssymb,amsfonts,bbm, bm, dsfont, mathrsfs}
\usepackage[all]{xy}
\usepackage{centernot}

\usepackage{enumitem} 
\renewcommand\labelenumi{(\roman{enumi})}
\renewcommand\theenumi\labelenumi


\usepackage{caption,tikz,pst-all,pst-solides3d,tikz,wrapfig}
\usetikzlibrary{hobby}

\usepackage{graphicx}
\usepackage{pgfplots}
\usepgfplotslibrary{fillbetween}

\pgfplotsset{
  flaeche/.style={draw=none,fill=gray,fill opacity=0.2}
}

\newtheorem{theorem}{Theorem}[section]
\newtheorem{corollary}[theorem]{Corollary}
\newtheorem{lemma}[theorem]{Lemma}

\theoremstyle{definition}
\newtheorem{definition}[theorem]{Definition}
\newtheorem{example}[theorem]{Example}
\newtheorem{remark}[theorem]{Remark}

\newtheorem{assumption}{Assumption}

\numberwithin{equation}{section}

\let\emptyset\varnothing

\newcommand{\D}{\mathcal{D}}
\newcommand{\R}{\mathbb{R}}
\newcommand{\C}{\mathcal{C}}
\newcommand{\N}{\mathbb{N}}

\newcommand{\A}{\mathcal{A}}

\newcommand{\B}{\mathcal{B}}

\newcommand{\F}{\mathcal{F}}
\newcommand{\X}{\mathcal{X}}

\newcommand{\U}{\mathcal{U}}

\newcommand{\M}{\mathcal{M}}
\renewcommand{\L}{\mathcal{L}}
\renewcommand{\P}{\mathcal{P}}
\renewcommand{\S}{\mathcal{S}}

\renewcommand{\1}{\mathds{1}}

\newcommand{\abs}[1]{\left| #1\right|}
\newcommand{\norm}[1]{\left\| #1\right\|}


\newcommand{\Span}{\mathrm{span}\hspace{0.06cm}} 
\newcommand{\Int}{\mathrm{int}\hspace{0.06cm}}
\newcommand{\dom}{\mathrm{dom}\hspace{0.06cm}}
\newcommand{\lev}{\mathrm{lev}\hspace{0.06cm}}
\newcommand{\epi}{\mathrm{epi}\hspace{0.06cm}}
\newcommand{\bd}{\mathrm{bd}\hspace{0.06cm}}
\newcommand{\cl}{\mathrm{cl}\hspace{0.06cm}}
\newcommand{\rec}{\mathrm{rec}\hspace{0.06cm}}

\newcommand{\prob}{\mathbb{P}}
\newcommand{\var}{\mathrm{VaR}\hspace{0.06cm}}

\newcommand{\avar}{\mathrm{AVaR}\hspace{0.06cm}}

\usepackage[hang]{footmisc}


\title{A new view on risk measures associated with acceptance sets}

\author{Marcel Marohn\footnote{Martin-Luther-University Halle-Wittenberg, Faculty of Natural Sciences II, Institute of Mathematics, 06099 Halle (Saale), Germany (E-mail: \href{mailto:marcel.marohn@b-tu.de}{marcel.marohn@b-tu.de})} 
   \and Christiane Tammer\footnote{Martin-Luther-University Halle-Wittenberg, Faculty of Natural Sciences II, Institute of Mathematics, 06099 Halle (Saale), Germany (E-mail:  \href{mailto:christiane.tammer@mathematik.uni-halle.de}{christiane.tammer@mathematik.uni-halle.de}
)}
}




\begin{document}
\fontfamily{cmss}\selectfont
%

%
%
%

\maketitle

\noindent {\bf Keywords.}
Acceptance sets; Regulation; Financial positions; Financial Mathematics; Investments; Cost minimization; Scalarization Methods; Translation invariance; Nonlinear functionals; Monetary risk measures; Coherent Risk Measures; Ordering cones.

\vskip 8mm {\noindent {\bf Abstract.}
In this paper, we study properties of certain risk measures associated with acceptance sets. These sets describe regulatory preconditions that have to be fulfilled by financial institutions to pass a given acceptance test. If the financial position of an institution is not acceptable, the decision maker has to raise new capital and invest it into a basket of so called eligible assets to change the current position such that the resulting one corresponds with an element of the acceptance set. Risk measures have been widely studied, see e.g. \cite{Fischer.2018} for an overview. The risk measure that is considered here determines the minimal costs of making a financial position acceptable. In the literature, monetary risk measures are often defined as translation invariant functions and, thus, there is an equivalent formulation as Gerstewitz-Functional (see Artzner et al. \cite{Artzner.1999} and Jaschke, Küchler \cite{Jaschke.2001}). The Gerstewitz-Functional is an useful tool for separation and scalarization in multiobjective optimization in the non-convex case. In our paper, we study properties of the sublevel sets, strict sublevel sets and level lines of a risk measure defined on a linear space. Furthermore, we discuss the finiteness of the risk measure and relax the closedness assumptions. \\

\section{Introduction}

Scalarization and separation of sets are important topics for many fields of research in mathematics and mathematical economics like multiobjective  optimization, risk theory, optimization under uncertainty and financial mathematics. For a given real vector space $\X$ over $\R$, nonlinear translation invariant functionals $\varphi_{\A,K} : \X \rightarrow \mathbb{R} \cup \{-\infty\} \cup \{+ \infty \}$ are considered with 
\begin{equation}\label{funcak00}
\varphi_{\A,K} (X):= \inf \{t\in
{\mathbb{R}} \mid X\in tK + \A\}
\end{equation}
where $\emptyset\neq \A\subseteq \X$ and $K\in\X\backslash\{0\}$ such that $\A-\R_+ K \subseteq \A$. Functionals given by \eqref{funcak00} are used by Gerstewitz in \cite{Gerstewitz.1984} for deriving separation theorems for not necessarily convex sets as separating functional and as scalarization functional of vector optimization problems. Formulations of $\varphi_{\A,K}$ convenient in risk theory are subject of this paper. Relationships between coherent risk measures and the functional \eqref{funcak00} are studied by Artzner et al. \cite{Artzner.1999} and Jaschke, K\"uchler \cite{Jaschke.2001} (see also \cite{Heyde.2006}, \cite{Hamel.2015}, \cite{Scandolo.2004}, \cite{Frittelli.2006} and references therein). For a given acceptance set $\A$ in the space of financial positions $\X$, we consider the functional
\begin{align}\label{rho_baes}
\X\ni X \mapsto \rho_{\A,\M,\pi}(X):=\inf \{\pi(Z)\mid Z\in \M, X+Z\in \A\}
\end{align} 
following Farkas et al. \cite{Farkas.2015} and Baes et al. \cite{Baes.2020}, where $\pi\colon \M \rightarrow \R$ is a pricing functional defined on a set of allowed movements $\M \subseteq \X$ by which an investor can change the current financial position $X$. The functional \eqref{rho_baes} is a risk measure describing the costs for satisfying regulatory preconditions. It is a generalization of \eqref{funcak00} through the simultaneous allowance of a set of directions instead of one fixed direction $K\in \X$ and the valuation of the movements by a general functional $\pi$. However, in the so called Reduction Lemma in \cite{Farkas.2015}, Farkas et al. showed that $\rho_{\A,\M,\pi}$ can be reduced to a functional from type \eqref{funcak00}. It is shown that this class of functionals from \eqref{funcak00} coincides with the class of translation invariant functions and, thus, such functionals are employed as coherent risk measures in risk theory (see \cite{Artzner.1999}), since translation invariance is a basic property of risk measures.  

\bigskip

Considering risk measures with respect to acceptance sets is from special interest in financial mathematics. Since the financial crisis revealed many deficits in risk taking and management of financial institutions, regulation acquired intensively greater importance in recent years, leading to restrictions for business activities like minimum capital requirements for credit risk of banks, see e.g. Basel III preconditions in \cite{BCBS.2010} and \cite{BCBS.2011}.
That leads to partly massive restrictions of possible investment stategies and to reduced gains by a risk-return trade-off, see e.g. \cite{Bali.2006}. Thus, finding optimal portfolios under given regulatory acceptance conditions is essential for the survive of an institution. In \cite{Marohn.2020}, we followed Baes et al. in \cite{Baes.2020} and considered a portfolio optimization problem with the aim of fulfill regulatory preconditions (we also say ``reaching acceptability'' because they are described by an acceptance set $\A$) under minimal costs for closed acceptance sets.

\bigskip

The paper is organized as follows: After a short overview about the needed mathematical background, we describe the basic financial model, which we consider in Sections 3 and 4. This includes the vector space of financial positions $\X$, the set of eligible payoffs $\M\subseteq \X$ and the linear pricing functional $\pi\colon \M \rightarrow \R$. Here, we give a definition for acceptance sets $\A\subseteq \X$ by Baes et al. in \cite{Baes.2020}, but we do not additionally assume closedness. Moreover, we state some very important examples from a practical point of view. Afterwards, we study properties of the risk measure $\rho_{\A,\M,\pi}$ in Section 5. 


\section{Preliminaries}
In this article, certain standard notions and terminology are used. Let $\X$ be a \textit{vector space over $\R$}. The extended set of real numbers is denoted by $\overline{\R} :=\R\cup\{-\infty,+\infty\}$, the set of non-negative real numbers by $\R_+$ , the set of positive real numbers by $\R_>$ and the set of non-positive real numbers by $\R_-$. Consider subsets $\A, B \subseteq \X$. Then,
\begin{align*}
\A+\B:=\{X+Y\mid X\in \A, Y\in \B\}
\end{align*}
is the \textit{Minkowksi sum} of $\A$ and $\B$. For simplicity, we replace $\{X\}+\A$ by $X+\A$ for the sum of a set $\A$ and a single element $X\in \X$. We set
\begin{align*}
\lambda \A = \{\lambda X \mid X\in \A\} \text{ with } \lambda \in \R,
\end{align*}
which leads for the Minkowski sum of $\A$ and $-\B$ to
\begin{align*}
\A-\B=\{X-Y\mid X\in \A, Y\in \B\}.
\end{align*} 
The cardinality of $\A$ is denoted by $\abs{\A}$. $\A$ is \textit{star-shaped (around $0$)} if 
\begin{align*}
\forall X\in \A, \forall \lambda \in [0,1]:\quad \lambda X \in \A.
\end{align*}
and $\A$ is  \textit{convex} if 
\begin{align*}
\forall X,Y\in \A, \forall \lambda \in [0,1]:\quad \lambda X + (1-\lambda)Y \in \A.
\end{align*}
It is obvious that $\A$ is star-shaped if it is convex and $0\in \A$. A nonempty set $\A$ is called a \textit{cone} if
\begin{align*}
\forall \ X\in \A, \forall \lambda \geq 0:\ \lambda X \in \A.
\end{align*}
Let $\C\subseteq \X$ be a cone. Then, $\C$ is star-shaped. Furthermore, $\C$ is said to be \textit{proper} or \textit{nontrivial} if $\{0\}\subsetneq \C \neq \X$. We call $\C$ \textit{pointed} if $\C\cap (-\C)=\{0\}$ and \textit{reproducing} if $\C-\C=\X$. For each cone $\C$ holds 
\begin{align*}
\C \text{ convex } \quad \Longleftrightarrow \quad \C + \C \subseteq \C.
\end{align*}
A cone from special interest is the \textit{recession cone} of a subset $\A\subset \X$, i.e.,
\begin{align*}
\rec \A := \{X\in \X\mid Y+\lambda X \in \A\ \text{ for all } Y\in \A, \lambda \in \R_+\}. 
\end{align*}

\begin{definition}[see Guti{\'e}rrez et al. \cite{Gutierrez.2016}]\label{d-direct_cl}
Let $\X$ be a vector space over $\R$, $\A\subseteq \X$ and $K\in \X\backslash\{0\}$. The \textit{$K$-directional closure of $\A$} is given by
\begin{align*}
\cl_K (\A):=\{X\in \X\mid \forall \lambda \in \R_>\ \exists t\in \R_+\text{ with }t<\lambda \text{ and }X-tK\in \A\}.
\end{align*}
$\A$ is said to be \textit{$K$-directionally closed} if $\A=\cl_K (\A)$. The \textit{$K$-directional interior of $\A$} is given by
\begin{align*}
\Int_K(\A):= \{X\in \X \mid \exists \lambda \in \R_> \ \forall t\in [0,\lambda]: X+tK\in \A\} 
\end{align*}
and the \textit{$K$-directional boundary of $\A$} is 
\begin{align*}
\bd_K(\A):=\cl_K(\A)\backslash \Int_K(\A).  
\end{align*}
\end{definition}

 The following properties will be useful and are collected from \cite{Tammer.2020}: 

\begin{lemma}[see Tammer and Weidner {\cite[Lemma 2.3.24]{Tammer.2020}}]\label{lem:k-directionally_cl_sequences}
Let $\X$ be a vector space over $\R$, $\A\subseteq \X$ and $K\in \X\backslash \{0\}$. Then,
\begin{align*}
\cl_K (\A)=\{X\in \X\mid \exists (t_n)_{n\in \N}\subseteq \R_+: t_n \downarrow 0\ \forall n\in \N: X-t_nK \in \A\}.
\end{align*}
\end{lemma}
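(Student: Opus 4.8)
The plan is to establish both inclusions of the set equality
\[
\cl_K(\A)=\{X\in\X\mid \exists (t_n)_{n\in\N}\subseteq\R_+:\ t_n\downarrow 0,\ X-t_nK\in\A\ \forall n\in\N\},
\]
directly from Definition \ref{d-direct_cl}. Denote the right-hand set by $S$. The key observation is that the defining condition of $\cl_K(\A)$ — ``for every $\lambda\in\R_>$ there exists $t\in\R_+$ with $t<\lambda$ and $X-tK\in\A$'' — is precisely a statement that $\A$ meets the segment $\{X-tK\mid t\in[0,\lambda)\}$ for arbitrarily small thresholds $\lambda$, which is exactly the kind of ``arbitrarily close along direction $K$'' condition that one packages into a null sequence.

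For the inclusion $S\subseteq\cl_K(\A)$, I would take $X\in S$ with witnessing sequence $(t_n)$, $t_n\downarrow 0$, and $X-t_nK\in\A$. Given any $\lambda\in\R_>$, since $t_n\to 0$ there is an index $n$ with $t_n<\lambda$; as $t_n\in\R_+$ and $X-t_nK\in\A$, the element $t:=t_n$ witnesses the membership condition for $\cl_K(\A)$ at that $\lambda$. Since $\lambda$ was arbitrary, $X\in\cl_K(\A)$. One small point to handle cleanly: the definition of $\cl_K$ allows $t=0$, i.e. $X\in\A$; this causes no trouble here since $t_n<\lambda$ is all that is required, and if the sequence were eventually $0$ that only makes the condition easier.

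For the converse inclusion $\cl_K(\A)\subseteq S$, let $X\in\cl_K(\A)$. Apply the defining property with the explicit choice $\lambda=\tfrac1n$ for each $n\in\N$: this yields some $s_n\in\R_+$ with $s_n<\tfrac1n$ and $X-s_nK\in\A$. Then $0\le s_n<\tfrac1n$ forces $s_n\to 0$. The only subtlety is that $(s_n)$ need not be monotonically decreasing, whereas $S$ asks for $t_n\downarrow 0$; I would repair this by passing to $t_n:=\min\{s_1,\dots,s_n\}$, which is nonincreasing, still lies in $\R_+$, still tends to $0$ (being squeezed between $0$ and $s_n<\tfrac1n$), and satisfies $X-t_nK\in\A$ because $t_n=s_m$ for some $m\le n$ and $X-s_mK\in\A$. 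Hence $X\in S$.

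I do not expect a genuine obstacle here — the statement is essentially a reformulation lemma. The only place requiring a touch of care is the monotonization argument in the second inclusion (producing a $\downarrow$ sequence from a merely convergent one) and the book-keeping around whether $0$ is permitted as a value of $t$ or $t_n$; both are handled by the $\min$-truncation trick and by noting that the strict inequality $t<\lambda$ (resp. $t_n<\lambda$) is the operative requirement. Since this is quoted from \cite[Lemma 2.3.24]{Tammer.2020}, I would keep the write-up short and self-contained along exactly these lines.
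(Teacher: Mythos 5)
Your argument is correct and is exactly the standard reformulation one expects: the paper itself gives no proof of this lemma (it is quoted from Tammer and Weidner \cite[Lemma 2.3.24]{Tammer.2020}), so there is nothing to diverge from. Both inclusions are sound, and your $\min$-truncation $t_n:=\min\{s_1,\dots,s_n\}$ correctly repairs the only real subtlety, namely turning the merely null sequence obtained from $\lambda=\tfrac1n$ into a nonincreasing one.
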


\begin{lemma}[see Tammer and Weidner {\cite[Lemma 2.3.26 and 2.3.42, Prop. 2.3.48 and 2.3.49]{Tammer.2020}}]\label{lem:k-directionally_properties}
Let $\X$ be a vector space over $\R$, $\A\subseteq \X$ and $K\in \X\backslash \{0\}$. Then, the following holds:
\begin{enumerate}[font = \normalfont]
\item $\A\subseteq \cl_K (\A) \subseteq \A+\R_+ K$,
\item $\cl_K(\cl_K(\A))=\cl_K(\A)$,
\item $\cl_K (\A) - \R_+ K = \cl_K(\A -\R_+ K)=\cl_K(\A-\R_> K)$,
\item $\A-\R_> K = \cl_K(\A)-\R_> K = \Int_K(\A - \R_> K)=\Int_K(\A-\R_+ K )$.
\end{enumerate}
\end{lemma}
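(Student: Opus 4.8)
I would reduce everything to unwinding Definition~\ref{d-direct_cl}, bringing in the sequential description of $\cl_K$ from Lemma~\ref{lem:k-directionally_cl_sequences} only where an actual limit is needed. The device that makes (iii) and (iv) transparent is the following reformulation, which I would establish first: for an arbitrary $\A\subseteq\X$, both $\cl_K(\A-\R_+K)$ and $\cl_K(\A-\R_>K)$ equal the single set
\begin{align*}
\D\ :=\ \{X\in\X\mid \forall\,\lambda\in\R_>\ \exists\,u\in\R\ \text{with}\ u<\lambda\ \text{and}\ X-uK\in\A\}.
\end{align*}
Indeed, $X\in\cl_K(\A-\R_+K)$ says that for every $\lambda>0$ there are $t\in[0,\lambda)$ and $s\in\R_+$ with $X-(t-s)K\in\A$, and as $(t,s)$ runs over $[0,\lambda)\times\R_+$ the value $t-s$ runs over all of $(-\infty,\lambda)$; this is precisely the condition defining $\D$, and the identical computation with $s\in\R_>$ handles $\cl_K(\A-\R_>K)$.

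Parts (i) and (ii) are short. For (i): if $X\in\A$ then $t=0$ in Definition~\ref{d-direct_cl} gives $X\in\cl_K(\A)$; if $X\in\cl_K(\A)$, taking $\lambda=1$ yields $t<1$ with $X-tK\in\A$, so $X=(X-tK)+tK\in\A+\R_+K$. For (ii), $\cl_K(\A)\subseteq\cl_K(\cl_K(\A))$ is (i) applied to $\cl_K(\A)$; conversely, for $X\in\cl_K(\cl_K(\A))$ and $\lambda>0$ I would pick $s<\lambda/2$ with $X-sK\in\cl_K(\A)$, then $t<\lambda/2$ with $X-sK-tK\in\A$, and conclude $X\in\cl_K(\A)$ since $s+t<\lambda$.

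For (iii), by the reformulation it remains to show $\D=\cl_K(\A)-\R_+K$. The inclusion ``$\supseteq$'' is immediate: if $X=W-rK$ with $W\in\cl_K(\A)$, $r\ge 0$, then for any $\lambda>0$ there is $t\in[0,\lambda)$ with $W-tK\in\A$, and $X-(t-r)K=W-tK\in\A$ with $t-r<\lambda$, so $X\in\D$. The reverse inclusion is the one genuinely delicate point, since $\A$ carries no stability property and the shift $r$ witnessing $X\in\cl_K(\A)-\R_+K$ must be produced. Given $X\in\D$, put $D:=\{u\in\R\mid X-uK\in\A\}$; then $D\ne\emptyset$ and $\inf D\le 0$. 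If some $u_0\in D$ has $u_0\le 0$, then $X=(X-u_0K)-(-u_0)K\in\A-\R_+K\subseteq\cl_K(\A)-\R_+K$. Otherwise $D\subseteq\R_>$, hence $\inf D=0$, so there is a sequence $u_n\in D$ with $u_n\downarrow 0$, and Lemma~\ref{lem:k-directionally_cl_sequences} gives $X\in\cl_K(\A)\subseteq\cl_K(\A)-\R_+K$. I expect this boundary case, with its appeal to the sequential characterisation, to be the main obstacle.

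For (iv) I would chain four elementary inclusions. First, $\A-\R_>K\subseteq\cl_K(\A)-\R_>K$ follows from (i), and if $X=W-rK$ with $W\in\cl_K(\A)$, $r>0$, then Definition~\ref{d-direct_cl} with $\lambda=r$ gives $t\in[0,r)$ with $W-tK\in\A$, so $X=(W-tK)-(r-t)K\in\A-\R_>K$; hence $\A-\R_>K=\cl_K(\A)-\R_>K$. Second, $\Int_K(\B)\subseteq\B$ for every $\B$ (set $t=0$ in the definition of $\Int_K$), so $\Int_K(\A-\R_>K)\subseteq\A-\R_>K$; conversely, if $X=a-rK$ with $a\in\A$, $r>0$, then $X+tK=a-(r-t)K\in\A-\R_>K$ for all $t\in[0,r/2]$, so $X\in\Int_K(\A-\R_>K)$; hence $\A-\R_>K=\Int_K(\A-\R_>K)$. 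Third, $\Int_K$ is monotone, so $\Int_K(\A-\R_>K)\subseteq\Int_K(\A-\R_+K)$; and if $X\in\Int_K(\A-\R_+K)$, choosing $\lambda>0$ with $X+\lambda K\in\A-\R_+K$ and writing $X+\lambda K=a-sK$ with $a\in\A$, $s\ge 0$, gives $X=a-(s+\lambda)K\in\A-\R_>K=\Int_K(\A-\R_>K)$ by what was just shown, closing the loop.
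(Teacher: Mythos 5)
The paper does not prove this lemma itself; it imports all four parts from Tammer and Weidner \cite[Lemma 2.3.26 and 2.3.42, Prop.\ 2.3.48 and 2.3.49]{Tammer.2020}, so there is no in-paper argument to compare against. Your proof is correct and self-contained. I checked each step: the reformulation of $\cl_K(\A-\R_+K)$ and $\cl_K(\A-\R_>K)$ as the common set $\D$ is valid (the parameter $t-s$ does sweep out all of $(-\infty,\lambda)$ in both the $s\ge 0$ and $s>0$ cases), the case split on $D=\{u\mid X-uK\in\A\}$ in (iii) is exhaustive, and the four inclusions chained in (iv) all go through. Two cosmetic remarks: in the boundary case of (iii) where $D\subseteq\R_>$, the appeal to Lemma~\ref{lem:k-directionally_cl_sequences} is not actually needed --- once you know that for every $\lambda>0$ there is $u\in D$ with $0<u<\lambda$, the membership $X\in\cl_K(\A)$ follows verbatim from Definition~\ref{d-direct_cl}; and in (i) it is worth saying explicitly that $t=0$ is admissible because the definition requires $t\in\R_+$ with $t<\lambda$, not $t>0$. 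Neither affects correctness.
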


\begin{lemma}[see Tammer and Weidner {\cite[Prop. 2.3.29 and 2.3.53]{Tammer.2020}}]\label{lem:k-directionally_properties_K_in_minusrec}
Let $\X$ be a vector space over $\R$, $\A\subseteq \X$ and $K\in \X\backslash \{0\}$. Suppose $K\in -\rec(\A)$. Then, the following holds: 
\begin{enumerate}[font = \normalfont]
\item $\cl_K(\A)=\{X\in \X \mid X-\R_> K \subseteq \A\}$,
\item $\Int_K(\A)=\{X\in \A\mid \exists t\in \R_>: X+tK\in \A\}=\A-\R_>K$, 
\item $\bd_K(\A)=\{X\in \X\mid \forall t\in \R_>: X+tK\notin \A \text{ and }X-tK\in \A\}$. 
\end{enumerate}
\end{lemma}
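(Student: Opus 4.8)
The plan is to squeeze everything out of the single hypothesis $K\in-\rec(\A)$, which by definition of the recession cone says $\A-\R_+K\subseteq\A$; since $0\in\R_+$ this is the same as $\A-\R_+K=\A$, i.e.\ $\A$ is invariant under moving in the direction $-K$. Apart from this one-directional monotonicity, the only inputs are Lemma~\ref{lem:k-directionally_cl_sequences} and Lemma~\ref{lem:k-directionally_properties}, and each of the three identities will follow by unwinding the corresponding definition from Definition~\ref{d-direct_cl} and invoking $\A-\R_+K=\A$.

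For (i) I would show both inclusions. ``$\subseteq$'': let $X\in\cl_K(\A)$ and fix $s\in\R_>$; the definition of $\cl_K(\A)$ applied with $\lambda:=s$ produces $t\in\R_+$ with $t<s$ and $X-tK\in\A$, and then $X-sK=(X-tK)-(s-t)K\in\A$ because $s-t>0$ and $-K\in\rec(\A)$; as $s\in\R_>$ was arbitrary, $X-\R_>K\subseteq\A$. ``$\supseteq$'': if $X-\R_>K\subseteq\A$, choose any sequence $t_n\downarrow0$ in $\R_>$; then $X-t_nK\in\A$ for all $n$, so $X\in\cl_K(\A)$ by Lemma~\ref{lem:k-directionally_cl_sequences}.

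For (ii) I would first identify the middle set with $\Int_K(\A)$. The inclusion $\Int_K(\A)\subseteq\{X\in\A\mid\exists t\in\R_>: X+tK\in\A\}$ is immediate from the definition of $\Int_K(\A)$ by taking the parameters $t=0$ and $t=\lambda$. Conversely, if $X\in\A$ and $X+t_0K\in\A$ for some $t_0\in\R_>$, then for every $t\in[0,t_0]$ one has $X+tK=(X+t_0K)-(t_0-t)K\in\A$ because $t_0-t\geq0$ and $-K\in\rec(\A)$, whence $X\in\Int_K(\A)$ with witness $\lambda=t_0$. The equality of the middle set with $\A-\R_>K$ is then routine: ``$\subseteq$'' writes $X=(X+tK)-tK$, while ``$\supseteq$'' uses $-K\in\rec(\A)$ to get $Y-tK\in\A$ for $Y\in\A$, $t\in\R_>$, together with $(Y-tK)+tK=Y\in\A$. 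Alternatively, since $\A-\R_+K=\A$, Lemma~\ref{lem:k-directionally_properties}(iv) gives $\A-\R_>K=\Int_K(\A-\R_+K)=\Int_K(\A)$ directly.

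For (iii) I would combine (i), (ii) and the definition $\bd_K(\A)=\cl_K(\A)\backslash\Int_K(\A)$. By (i), $X\in\cl_K(\A)$ means $X-tK\in\A$ for all $t\in\R_>$; by (ii), $X\notin\Int_K(\A)$ means $X\notin\A$ \emph{or} $X+tK\notin\A$ for all $t\in\R_>$. For ``$\subseteq$'' the point is to discharge the alternative $X\notin\A$: if nonetheless $X+t_0K\in\A$ for some $t_0\in\R_>$, then $(X+t_0K)-t_0K=X$ would lie in $\A$ by $-K\in\rec(\A)$, a contradiction; hence in every case $X+tK\notin\A$ for all $t\in\R_>$, and together with $X-tK\in\A$ this places $X$ in the claimed set. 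Conversely ``$\supseteq$'' is read off immediately: $X-tK\in\A$ for all $t\in\R_>$ gives $X\in\cl_K(\A)$ by (i), and $X+tK\notin\A$ for all $t\in\R_>$ gives $X\notin\Int_K(\A)$ by (ii). The one genuinely delicate spot is exactly this last discharge: one should not expect $X\in\cl_K(\A)$ to force $X\in\A$ (it fails already for $\A=\R_>\subseteq\R$, $K=-1$), so the possibility $X\notin\A$ must be ruled out by hand, and it is precisely the recession hypothesis that does it; everything else is bookkeeping with the definitions.
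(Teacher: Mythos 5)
Your proof is correct. Note that the paper does not prove this lemma at all -- it is imported verbatim from Tammer and Weidner \cite[Prop.~2.3.29 and 2.3.53]{Tammer.2020} -- so there is no in-paper argument to compare against; what you have supplied is a complete, self-contained derivation from Definition~\ref{d-direct_cl}, Lemma~\ref{lem:k-directionally_cl_sequences} and the single hypothesis $\A-\R_+K=\A$. All three parts check out: the upgrade from ``some $t<s$'' to ``all $s>0$'' in (i), the equivalence of the three descriptions in (ii) (including the shortcut via Lemma~\ref{lem:k-directionally_properties}(iv)), and in particular the one genuinely delicate step in (iii), where you correctly rule out the alternative $X\notin\A$ by pulling $X=(X+t_0K)-t_0K$ back into $\A$ via the recession hypothesis rather than by appealing to $X\in\cl_K(\A)$, which (as your example $\A=\R_>$, $K=-1$ shows) would not suffice.
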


\bigskip

Now, we suppose that $(\X,\tau)$ is a \textit{topological vector space over $\R$}. Note that we just write $\X$ for $(\X,\tau)$ if $\tau$ is obvious or not from interest. Elements of $\tau$ are called \textit{open} sets and elements of $\X\backslash \tau$ are called \textit{closed}. If $\{0\}$ is closed, $\X$ is called \textit{Hausdorff}. We say that $\X$ is \textit{locally convex} if it exists $\{\U_i\}_{i\in I}\subseteq \tau$ of $0$ such that the sets $\U_i$ are convex. For $\A\subseteq \X$, $\cl (\A)$ denotes  the \textit{closure}, $\bd (\A)$ the \textit{boundary} and $\Int (\A)$ the \textit{interior} of $\A$. The following lemma gives a connection between topological and directional properties of $\A$:

\begin{lemma}[see Tammer and Weidner {\cite[Prop. 2.3.54 and 2.3.55]{Tammer.2020}}]\label{lem:k-directionally_topol_properties}
Let $\X$ be a topological vector space over $\R$, $\A\subseteq \X$ and $K\in \X\backslash \{0\}$. Then, the following holds:
\begin{enumerate}[font = \normalfont]
\item $\cl_K(\A)\subseteq \cl(\A)$,
\item $\Int(\A)\subseteq \Int_K(\A)\subseteq \A$,
\item $\bd_K(\A)\subseteq \bd (\A)$.
\end{enumerate}
\end{lemma}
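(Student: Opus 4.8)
The plan is to derive all three inclusions from a single elementary fact about topological vector spaces: for fixed $X\in\X$ and $K\in\X$ the map $\gamma\colon\R\to\X$, $\gamma(t):=X+tK$, is continuous, being a composition of the (jointly continuous) scalar multiplication and vector addition. In particular $\gamma$ is continuous at $t=0$ with $\gamma(0)=X$. Part (iii) will then follow purely formally from (i) and (ii), since by Definition~\ref{d-direct_cl} we have $\bd_K(\A)=\cl_K(\A)\setminus\Int_K(\A)$ and $\bd(\A)=\cl(\A)\setminus\Int(\A)$.

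For (i), let $X\in\cl_K(\A)$ and let $U$ be an arbitrary open neighbourhood of $X$. By continuity of $\gamma$ at $0$ there is $\lambda\in\R_>$ with $\gamma(-t)=X-tK\in U$ for all $t\in[0,\lambda)$. Applying the definition of $\cl_K(\A)$ with this $\lambda$ yields some $t\in\R_+$, $t<\lambda$, with $X-tK\in\A$; hence $X-tK\in U\cap\A$, so $U\cap\A\neq\emptyset$. As $U$ was arbitrary, $X\in\cl(\A)$. Equivalently, one may invoke Lemma~\ref{lem:k-directionally_cl_sequences}: choose $t_n\downarrow 0$ with $X-t_nK\in\A$; then $X-t_nK=\gamma(-t_n)\to X$, and every neighbourhood of $X$ eventually contains these points of $\A$.

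For (ii), the inclusion $\Int_K(\A)\subseteq\A$ is immediate: if $X\in\Int_K(\A)$ then by Definition~\ref{d-direct_cl} there is $\lambda\in\R_>$ with $X+tK\in\A$ for all $t\in[0,\lambda]$, and the choice $t=0$ gives $X\in\A$. For $\Int(\A)\subseteq\Int_K(\A)$, let $X\in\Int(\A)$ and choose an open neighbourhood $U$ of $X$ with $U\subseteq\A$. Continuity of $\gamma$ at $0$ provides $\delta\in\R_>$ with $\gamma(t)=X+tK\in U\subseteq\A$ for all $t\in(-\delta,\delta)$; setting $\lambda:=\delta/2$ we obtain $X+tK\in\A$ for all $t\in[0,\lambda]$, i.e. $X\in\Int_K(\A)$.

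Finally, for (iii), take $X\in\bd_K(\A)$, so $X\in\cl_K(\A)$ and $X\notin\Int_K(\A)$. By (i), $X\in\cl(\A)$; and since $\Int(\A)\subseteq\Int_K(\A)$ by (ii), $X\notin\Int_K(\A)$ forces $X\notin\Int(\A)$. Hence $X\in\cl(\A)\setminus\Int(\A)=\bd(\A)$. The only point requiring any care is the argument for (i) when $\X$ is possibly non-metrizable: one must argue via the neighbourhood filter of $X$ (or, equivalently, continuity of $\gamma$) rather than with convergent sequences, which is precisely what the formulation of $\cl_K$ in Definition~\ref{d-direct_cl} is tailored for. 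No use of the hypothesis $K\neq 0$ is needed here; it is merely inherited from the setting of the cited results.
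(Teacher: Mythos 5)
Your proof is correct. Note that the paper does not prove this lemma at all: it is imported verbatim from Tammer and Weidner \cite[Prop.~2.3.54 and 2.3.55]{Tammer.2020}, so there is no in-paper argument to compare against. Your argument is the natural one — all three inclusions reduce to the continuity of $t\mapsto X+tK$ at $t=0$ in a topological vector space, with (iii) following formally from (i) and (ii) via $\bd_K(\A)=\cl_K(\A)\setminus\Int_K(\A)$ and $\bd(\A)=\cl(\A)\setminus\Int(\A)$ — and your care in arguing (i) through neighbourhoods rather than sequences (so that no metrizability or first-countability is needed) is exactly right. Your closing observation that $K\neq 0$ plays no role is also accurate.
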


We call $(X,\norm{ \cdot })$ a \textit{normed vector space} where $\X$ is a vector space (here assumed to be) over $\R$ and $\norm{ \cdot }$ denotes a \textit{norm} on $\X$, i.e., a map $\norm{ \cdot } \colon \X\rightarrow \R_+$ such that for every $X,Y\in \X$ and $\alpha\in \R$ the following holds: (i) $\norm{X}=0 \Rightarrow X = 0$, (ii) $\norm{\alpha X} =\abs{\alpha} \norm{X}$ and (iii) the triangle inequality $\norm{X+Y}\leq \norm{X} + \norm{Y}$. We write just $\X$ for $(\X,\norm{ \cdot })$ if the norm is obvious or not from interest. A sequence $(X_k)_{k\in \N}\subseteq \X$ is said to \textit{converge} in $\X$ to\ the \textit{limit} $X\in \X$ (and we write $X_k\rightarrow X$ for $k\rightarrow +\infty$) if $\lim\limits_{k\rightarrow +\infty} \norm{X_k-X}=0$. We say $(\X,\norm{ \cdot })$ is \textit{complete} if each Cauchy sequence has a limit in $\X$. A complete normed vector space is called \textit{Banach space}. The set 
\begin{align*}
\B_r(Z):=\left\{W\in \X\mid \norm{W-Z}\leq r\right\}
\end{align*}
denotes the closed ball with center $Z\in \X$ and radius $r>0$.

\begin{example}\label{expl:Lp space} 
There are many possible choices for $\X$ in the modeling process. Since we will deal with random variables describing payoffs of assets or portfolios and financial risk measures, there are some widely used and suitable spaces, see e.g. Liebrich and Svindland \cite{Liebrich.2017}. For measure spaces $\X=(\Omega,\F,\mu)$, we consider probability measures $\prob\colon \F\rightarrow [0,1]$ on a sigma-algebra $\F\subseteq \P(\Omega)$ with state space $\Omega$ instead of general measures $\mu\colon \F\rightarrow \overline{\R}$. One possibility for the choice of $\X$ is the \textit{space of bounded random variables} 
\begin{align*}
\B(\Omega,\F,\prob)=\{X\colon \Omega \rightarrow \R \mid \abs{X(\omega)}<+\infty \text{ for all }\omega \in \Omega\}. 
\end{align*}
If we consider the norm $\norm{X}_\B=\sup\limits_{\omega \in \Omega} \abs{X(\omega)}$, then ($\B(\Omega,\F,\prob),\norm{ \cdot }_\B$) is a Banach space. Moreover, one can consider $\L^p$-spaces: If we identify $X\equiv Y$ if and only if $\prob (X\neq Y) =0$, then the $\L^p$-space consists for $p\in[1,+\infty)$ of real-valued $p$-integrable functions on a measure space $(\Omega,\F,\mu)$. Hence, the \textit{space of $p$-integrable random variables} is denoted by $\L^p(\Omega,\F,\prob)$ for $1\leq p <+\infty$ and is given by 
\begin{align*}
\L^p(\Omega,\F,\prob) = \{X\colon \Omega \rightarrow \R| \norm{X}_{\L^p}< +\infty\}
\end{align*} 
with $\norm{X}_{\L^p}=(\mathbb{E}(\abs{X}^p))^{\frac{1}{p}}=\left(\int_\Omega \abs{X}^p \ d\prob \right)^{\frac{1}{p}}$. To extend the definition to the case $p=+\infty$, we set
\begin{align*}
\L^\infty(\Omega,\F,\prob)=\{X\colon \Omega \rightarrow \R|\norm{X}_{\L^\infty} < +\infty\}
\end{align*}
with $\norm{X}_{\L^\infty}=\inf\{C\geq 0| \abs{X(\omega)} \leq C\ \prob \text{-a.s.}\}$ and call $\L^\infty(\Omega,\F,\prob)$ the \textit{space of essential bounded random variables}. As before, if the parameters $\Omega$, $\F$ and $\prob$ do not matter or are clear, we just write $\L^p$. The spaces $(\L^p,\norm{ \cdot }_{\L^p})$ and $(\L^\infty,\norm{ \cdot }_{\L^\infty})$ are Banach spaces, since almost-sure identical random variables are identified. Furthermore, $\L^q\subseteq \L^p$ for $1\leq p \leq q\leq +\infty$ because of $\prob(\Omega)<+\infty$.
\end{example}

\bigskip
Consider a map $f \colon \X\rightarrow \overline{\R}$ where $\X$ is a vector space  over $\R$. Then,
\begin{align*}
\mathrm{dom} f:=\{X\in \X\mid f(X)< +\infty\}
\end{align*}
is the \textit{domain} of $f$ and 
\begin{align*}
\mathrm{epi} f:=\{(X,t)\in \X\times \R\mid  f(X)\leq t\}
\end{align*}
the \textit{epigraph} of $f$. We call $f\colon \X\rightarrow \overline{\R}$ \textit{proper} if $\mathrm{dom} f \neq \emptyset$ and $f(X)>-\infty$ for all $X\in \X$. If $\mathrm{epi} f$ is convex, then $f$ is said to be \textit{convex}. $f$ is said to be \textit{linear} if 
\begin{align*}
\forall X,Y\in \X, \forall \lambda, \mu \in \R: \quad f(\lambda X + \mu Y) = \lambda f(X) + \mu f(Y).
\end{align*}
The \textit{kernel} or \textit{null space} of a linear map $f$ is the subspace
\begin{align*}
\ker f := \{X\in \X\mid f(X)=0\}.
\end{align*}
We call
\begin{align*}
\lev_{f,=}(\alpha):=\{X\in \X \mid f(X)=\alpha\}
\end{align*}
\textit{level line},
\begin{align*}
\lev_{f,\leq}(\alpha):=\{X\in \X \mid f(X)\leq\alpha\}
\end{align*}
\textit{sublevel set} and 
\begin{align*}
\quad \lev_{f,<}(\alpha):=\{X\in \X \mid f(X)<\alpha\}
\end{align*}
\textit{strict sublevel set} of $f$ to the level $\alpha\in \R$. For short, we denote each of the sets $\lev_{f,=}(\alpha)$, $\lev_{f,\leq}(\alpha)$ and $\lev_{f,<}(\alpha)$ \textit{level set}. \\ 

Now, we suppose that $\X$ is a topological vector space  over $\R$. $f$ is called \textit{lower semicontinuous} if $\mathrm{epi} f$ is closed. The set
\begin{align*}
\X^* := \{\varphi\colon \X \rightarrow \R\mid  \varphi \text{ linear and continuous}\}
\end{align*}
denotes the \textit{topological dual space}. If $\X$ is a locally convex Hausdorff space with $\X\neq \{0\}$, there is some non-trivial linear continuous functional, i.e., $\X^*\neq \{0\}$.  Equivalently,
\begin{align*}
\forall X\in \X\backslash \{0\} \ \exists \varphi \in \X^*:\qquad \varphi(X)> 0.
\end{align*}
Consequently, for given $X,Y\in \X$ with $X\neq Y$, there exists $\varphi \in \X^*$ with $\varphi(X)\neq \varphi(Y)$.  \\

In vector optimization and other applications it is necessary to compare elements $X,Y\in \X$. Hence, let $\mathcal{R}\subseteq \X\times \X$ be a binary relation on $\X$ and we set $X\mathcal{R}Y$ equivalently for $(X,Y)\in \mathcal{R}$, compare \cite{Gopfert.2003} for standard terminology and examples. For our purposes we remember that $\mathcal{R}$ is a \textit{partial order} if it is reflexive, antisymmetric and transitive. $\X$ is said to be \textit{partially ordered by $\mathcal{R}$} if $\mathcal{R}$ is a partial order on $\X$. In the following, we write $\leq$ for $\mathcal{R}$. 

Cones in $\X$ are suitable for describing binary relations on $\X$ and, especially, partial orders. The following theorem specifies that:

\begin{theorem}[see \cite{Gopfert.2003}, Theorem 2.1.13]\label{thm:partial_order_by_cones}
Let $\X$ be a vector space  over $\R$, $\leq$ a reflexive binary relation on $\X$ that is compatible with the linear structure of $\X$ and $\C\subseteq \X$ a cone. 
\begin{enumerate}[font = \normalfont]
\item Consider 
\begin{align}\label{def:partial order relation R_C}
\leq_\C := \{(X,Y)\in \X\times \X\mid  Y-X\in \C\} .
\end{align}
Then, $\leq_\C$ is a binary relation on $\X$, too, and fulfills the following properties:
\begin{enumerate}[font = \normalfont]
\item $\leq_\C$ is reflexive and compatible with the linear structure of $\X$,
\item $\leq_\C$ is transitive $\Leftrightarrow$ $\C$ is convex,
\item $\leq_\C$ is antisymmetric $\Leftrightarrow$ $\C$ is pointed.
\end{enumerate}
\item Consider
\begin{align*}
\C_\leq:=\{X\in \X\mid 0 \leq X\}.
\end{align*}
Then, $\C_\leq$ is a cone such that for all $X,Y\in \X$: 
\begin{align*}
X \leq Y \quad \Longleftrightarrow \quad X \leq_{(\C_\leq)} Y.
\end{align*}
\end{enumerate}
\end{theorem}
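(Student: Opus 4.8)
The plan is to prove all assertions by translating each statement about the relations $\leq_\C$ and $\leq$ into a statement about membership in the respective cone, using only that a cone is nonempty and closed under multiplication by nonnegative scalars, together with the equivalence ``$\C$ convex $\Leftrightarrow \C + \C \subseteq \C$'' recorded in the Preliminaries. Note that part (i) uses nothing about $\leq$ at all --- it concerns only the relation $\leq_\C$ induced by the cone $\C$ --- whereas part (ii) uses exactly the two standing hypotheses on $\leq$, namely reflexivity and compatibility with the linear structure.

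For part (i), I would first observe that $\leq_\C$ is a binary relation by construction, being defined as a subset of $\X \times \X$. Reflexivity holds because $0 \in \C$ (a nonempty cone contains the origin, since $0 = 0 \cdot X$ for any $X \in \C$), so $X - X = 0 \in \C$ for every $X \in \X$; compatibility with the linear structure is immediate, since $(Y+Z)-(X+Z) = Y-X$ takes care of translations and $\lambda Y - \lambda X = \lambda(Y-X) \in \C$ (for $\lambda \ge 0$) takes care of scalings. For (i)(b), the implication ``$\C$ convex $\Rightarrow \leq_\C$ transitive'' is the one-line computation $Z-X = (Z-Y)+(Y-X) \in \C + \C \subseteq \C$; for the converse I would feed the two relations $0 \leq_\C c_1$ and $c_1 \leq_\C c_1 + c_2$, which hold for arbitrary $c_1, c_2 \in \C$, into transitivity, obtaining $c_1 + c_2 \in \C$, i.e.\ $\C + \C \subseteq \C$. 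Part (i)(c) runs in parallel: if $\C$ is pointed and $Y-X \in \C$, $X-Y \in \C$, then $Y-X \in \C \cap (-\C) = \{0\}$, so $X=Y$; conversely, given $c \in \C \cap (-\C)$ one has both $0 \leq_\C c$ and $c \leq_\C 0$, whence antisymmetry of $\leq_\C$ forces $c = 0$, so $\C$ is pointed.

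For part (ii), I would check that $\C_\leq := \{X \in \X \mid 0 \leq X\}$ is a cone using only the hypotheses on $\leq$: it contains $0$ by reflexivity, and for $\lambda \ge 0$ and $X \in \C_\leq$ compatibility with the linear structure gives $0 = \lambda \cdot 0 \leq \lambda X$ (for $\lambda = 0$ this is again just reflexivity), so $\lambda X \in \C_\leq$. The asserted equivalence then reduces to the chain $X \leq Y \Leftrightarrow 0 \leq Y - X \Leftrightarrow Y - X \in \C_\leq \Leftrightarrow X \leq_{(\C_\leq)} Y$, in which the first equivalence is just translation invariance of $\leq$ (add $-X$ to get ``$\Rightarrow$'', add $X$ to get ``$\Leftarrow$'') and the remaining two are the definitions of $\C_\leq$ and of $\leq_{(\C_\leq)}$.

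I do not expect a genuine obstacle. The only steps requiring a moment of thought are the ``only if'' directions in (i)(b) and (i)(c): there one must pick the right test pairs --- $(0, c_1)$ together with $(c_1, c_1+c_2)$ in the transitivity case and $(0,c)$ together with $(c,0)$ in the antisymmetry case --- so that the assumed property of the relation delivers precisely the desired closure property of the cone. Everything else is a routine unfolding of definitions.
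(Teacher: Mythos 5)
Your proof is correct and complete; all four directions of the equivalences in (i)(b) and (i)(c) are handled properly, and the choice of test pairs $(0,c_1)$, $(c_1,c_1+c_2)$ and $(0,c)$, $(c,0)$ is exactly what is needed. The paper itself states this result only as a citation to G\"opfert et al.\ (Theorem 2.1.13) without reproducing a proof, and your argument is the standard one found there, so there is nothing further to compare.
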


By Theorem \ref{thm:partial_order_by_cones}, $\leq_\C$ from \eqref{def:partial order relation R_C} is a partial order if and only if $\C\subseteq \X$ is a convex, pointed cone. Thus, we call a  convex, pointed cone $\C$ an \textit{ordering cone}. The corresponding partial order $\leq_\C$ is given by 
\begin{align}\label{def:partial order with ordering cone}
\forall X,Y\in \X:\ X\leq_\C Y :\Longleftrightarrow Y-X\in \C.
\end{align}
For example, if $\X$ is partially ordered by $\leq$, the natural ordering cone in $\X$ is the \textit{positive cone}
\begin{align}\label{eq:pos_cone_def}
\X_+=\{X\in \X\mid  0 \leq X\}
\end{align}
and the corresponding partial order is simplified denoted $\leq$ instead $\leq_{\X_+}$. Every element $X\in \X$ is called \textit{positive}. One example for $\leq$ is the component-wise ordering on $\X=\R^n$. 


\section{The financial market} \label{sec:finmodel} 
Our basic market model refers to Baes et al. \cite{Baes.2020} and Farkas, Koch-Medina, Munari \cite{Farkas.2015}. We consider an one-period-model of a financial market, where an investor choices his or her portfolio at the time $t=0$, which results in a capital position with some (in general) random payoff at the future time $t=1$. In this section, we present this model in more detail.

\bigskip

Throughout this paper, we consider a real vector space $\X$ called the \textit{space of capital positions}, usually a space of random variables. Furthermore, let $\Omega$ be the set of possible states in $t=1$, $\F$ be a $\sigma$-Algebra on $\Omega$ and $\mathbb P\colon \F \rightarrow [0,1]$ a probability measure. 

\bigskip

While Baes et al. \cite{Baes.2020} assume a locally convex Hausdorff topological vector space over $\R$ fulfilling the first axiom of countability and Farkas et al. \cite{Farkas.2015} assume a topological vector space  over $\R$, we suppose a real vector space $\X$, which we only extend to be equipped with a topology and further properties where necessary. Sometimes we speak of financial positions instead of capital positions $X\in \X$. Nevertheless, $X$ is the capital of an investor  in the future time $t=1$ and is given by the residuum of assets and liabilities, i.e., positive outcomes are gains and negative outcomes are losses. $\X$ is typically chosen as a space like $\L^p$ with $1\leq p < +\infty$, see Example \ref{expl:Lp space}. In some situations, we consider a finite set $\Omega=(\omega_1,\dots,\omega_n)$, i.e., vectors $X\in\R^n$ with $X_k=X(\omega_k)$ ($k=1,\dots,n$). For each event $\B\in \F$, we set
\begin{align*}
X\in \B \ \mathbb{P}-\text{a.s.} \quad := \quad \mathbb{P}(X\in \B)=\mathbb{P}(\{\omega \in \Omega \mid X(\omega)\in \B\})=1,
\end{align*}
where ''a.s.'' means ''almost surely''. If $X,Y\in \X$ are random variables, we write $X=Y$ and $X\neq Y$ if and only if $\mathbb{P}(X=Y)=1$ and $\mathbb{P}(X=Y)=0$, respectively. Note that $\X$ could be any normed vector space. Furthermore, we suppose $\X$ to be partially ordered by the pointed convex cone $\X_+$. The cone is represented by the order relation $\leq$ as given in \eqref{eq:pos_cone_def}.
If we consider a space of random variables, e.g. $\X=\L^p(\Omega,\F,\prob)$, we understand $0 \leq X$ in the sense of $\prob$-a.s. 

In the following the superscript $^T$ always denotes transposed vectors. As mentioned in the beginning, an investor can invest into a finite set $\S$ of  \textit{eligible assets} 
\begin{align}\label{def:eligible_assets}
S^i=(S_0^i,S_1^i)^T,\quad  i\in\{0,1,\dots,N\} 
\end{align}
with $N\in\N$ in time $t=0$. Here, $S_0^i\in \R$ denotes the price in $t=0$ for one unit of the liquid asset and $S_1^i\in \X$ denotes the (in general random) payoff in $t=1$ for each unit. We set 
\begin{align}\label{def:eligible_assets_secure}
S^0:=(1,(1+r)\1_\Omega)^T=(1,1+r)^T.
\end{align}
$S^0$ describes a secure investment opportunity with interest rate $r\in \R_+$. Secure means that the payoff is a constant. For every constant random variable $X=c\cdot \1_\Omega$, i.e., $\prob(X=c)=1$, with $c\in \R$ arbitrary, we just write $X=c$. For a collection of all prices or payoffs, respectively, we set
\begin{align}\label{def:S_j}
S_j:=(S_j^0,S_j^1,\dots,S_j^N)^T, \qquad j\in \{0,1\}.
\end{align}
\begin{remark}
The secure asset $S^0$ is not directly assumed by Baes et al. in \cite{Baes.2020} or Farkas et al. in \cite{Farkas.2015}, but makes sense for economical reasons. The existence of such an asset is typical for the most common models in modern financial economics like the Capital Asset Pricing Model (CAPM), which is one of the most known and used equilibrium asset pricing models in theory and practice, see Sharpe \cite{Sharpe.1964}, Lintner \cite{Lintner.1965} and Mossin \cite{Mossin.1966}. The CAPM outlines the relationship between risk and expected return of an asset. Like originally shown by Sharpe in \cite{Sharpe.1963}, the firm-specific risks can be diversified, but not the systematic risk (market risk), which is an assumption that seems to be verified regarding the recent Corona-crisis, as seen for example in March 2020 when all global stock markets fell around 30 \% - 40 \% in regard to the beginning of the year due to the pandemic. Consequently, only the systemic risk is rewarded with a risk premium, which the CAPM and generalizations of it also retain, see e.g. \cite{Bodie.2018}.

 The secure investment opportunity could be a central bank account or a U.S. treasury bond. We suppose for convenience no interest payments, i.e., $r=0$, which, of course, is a major simplification as well as that there is only one secure alternative which is especially independent from the time horizon. Moreover, it is well known that $r\geq 0$ is not always true in practice. For instance, European banks are penalized by $r=-0.4 \%$, called deposit facility, since March 2016 if they park money at the European Central Bank (ECB) instead of invest or lend it \cite{EuropeanCentralBank.2019}. Thus, the bank pays money for lending it the ECB. This phenomen does not only concern banks: it is also challenging for assurances. These have to invest money into theoretical secure assets like government bonds by law, but that bonds partly have negative effective interest rates, too, see for example the yields of German government bonds on the 2nd August 2019, which were negative for every maturity.
\end{remark}

There has been many research concerning the case of one eligible asset (also, how to choose it), see e.g. Farkas et al. for a defaultable bond $\X=\B(\Omega)$ in \cite{Farkas.2014} and \cite{Farkas.2014b}. Other spaces have been studied, too, see e.g. Kaina and Rüschendorf in \cite{Kaina.2009} for $\X=\L^p$ or Cheridito and Li in \cite{Cheridito.2009} for Orlicz spaces. The actions a decision maker can take into account are described by suitable portfolios of the assets $S^i$. The subspace
\begin{equation}\label{calM}
\M:=\Span(1,S_1^1,\dots,S_1^N)
\end{equation}
of $\X$ spanned by the secure payoff and $S_1^i$ with $i=1,\dots,N$ is called \textit{space of the eligible payoffs}. We assume linear independent eligible payoffs. Consequently,
\begin{align}\label{eq:dim_M}
1 < \dim (\M) <  +\infty.
\end{align}
Every $Z\in \M$ describes the random payoff related to a portfolio of those eligible assets. To ensure $S_1^0=1\in \M \subseteq \X$ for cases like $\X=\L^p(\Omega,\F,\mu)$, we have to assume a finite measure space, i.e., $\mu(\Omega)< +\infty$, which is guaranteed here by a probability measure $\mu=\prob$, i.e., $\prob(\Omega)=1$ for arbitrary $\Omega$. If $\X$ is a topological vector space, then $\M$ is equipped with the relative topology induced by $\X$ which is normable through $\dim \M <  +\infty$. In the following, the fixed norm in $\M$ for topological vector spaces $\X$ is given by $\norm{ \cdot }$.


As we consider a financial market, we make the following typical assumptions:

\begin{assumption}\label{Ass1}
For $\M$ being the subspace of $\X$ given by \eqref{calM}, the \textit{Law of One Price} holds, i.e., for all $Z\in \M$ there exists $c\in \R$ such that
\begin{align}
\forall x\in \R^{N+1} \text{ with }Z  = S_1^T x: \quad S_0^T x = c \label{LawOnePrice}
\end{align} 
with $S_j$ given by \eqref{def:S_j} for $j\in\{0,1\}$. Furthermore, the \textit{no-arbitrage principle} is fulfilled, i.e., there is no arbitrage opportunity (in the sense of Irle \cite[Def. 1.10]{Irle.2012}, see Remark \ref{rem:arbitrage}). Especially,  for all $x\in \R^{N+1}$ with payoff $Z=S_1^Tx$, it holds that
\begin{align}
(S_0^T x \leq 0\ \land \ Z\geq 0\ \prob - a.s.) \quad \Longrightarrow \quad S_0^Tx = 0 = \prob(Z > 0).
\end{align}
\end{assumption}

\begin{remark}\label{rem:arbitrage}
The no-arbitrage principle in Assumption \ref{Ass1} uses the arbitrage terminology from Irle \cite[Def. 1.10]{Irle.2012}, where an arbitrage opportunity is defined as $x\in \R^{N+1}$ with 
\begin{align*}
S^T_0 x \leq 0\ \land \ \prob(S_1^T x\geq 0)=1 \quad \text{ and it holds } \quad S_0^Tx < 0 \ \lor \  \prob(S_1^Tx>0)>0.
\end{align*}
Irle notes in \cite[Anmerkung 1.11]{Irle.2012} that an one-period-model is arbitrage-free if there is no arbitrage opportunity with $S_0^T x = 0$. In the definition of arbitrage by Föllmer and Schied in \cite[Def. 1.2]{Follmer.2016}, the case $S_0^Tx<0$ is not included. 

Nevertheless, we consider two types of arbitrage here, namely \textit{free lunch}, i.e., $S_0^Tx<0$, and \textit{free lottery} (or  \textit{money machine}), i.e., $\prob(S_1^T x >0)>0$ (see Bamberg \cite{Bamberg.2003}). An illustrative introduction to the arbitrage principle and mathematical studies for especially derivative assets can be found in \cite{Varian.1987} and a bride more general mathematical introduction in \cite{Delbaen.2008}. In \cite{Hull.2015b} and \cite{Bodie.2018}, the economical background is presented, especially the distinction between arbitrage, hedging and speculation, and its role in capital market theory. The arbitrage in the explicit example of Japan is studied in \cite{Miyazaki.2007}.
\end{remark}

Portfolios $x=(x_0, x_1,\dots, x_N)^T\in \R^{N+1}$ with the same payoff have the same initial price by Law of One Price, which, especially, leads to a unique price for every eligible payoff $Z\in \M$. Following Baes et al. \cite{Baes.2020}, we define a \textit{pricing functional} $\pi\colon \M\rightarrow \R$ (with $\M$ given by \eqref{calM}) as
\begin{align}
\pi(Z):=S_0^Tx  \ \text{ for all }x\in\R^{N+1}:\ Z=S_1^T x.\label{def:pi_Z}
\end{align}
Of course, $\pi$ is a linear operator. If $\X$ is a topological vector space, then $\pi$ is also continuous, since $\dim \M <+\infty$. An important property is the monotonicity of $\pi$. In contrast to \cite{Marohn.2020}, where the absence of good deals was required, we show that $\pi$ is always monotonically increasing, i.e., 
\begin{align}\label{eq:mon_increasing}
\forall Z_1,Z_2 \in \M:\quad Z_2-Z_1\in \X_+ \quad \Longrightarrow \quad \pi(Z_1)\leq \pi(Z_2).
\end{align}
 
\begin{lemma}\label{lem:monotonicity_of_pi}
Let Assumption \ref{Ass1} be fulfilled. Then, the pricing functional $\pi\colon \M\rightarrow \R$ from \eqref{def:pi_Z} is monotonically increasing on $\M$ (see \eqref{eq:mon_increasing}).
\end{lemma}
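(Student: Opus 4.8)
The plan is to reduce the monotonicity statement to a single positivity claim about $\pi$ and then invoke the no-arbitrage principle from Assumption \ref{Ass1}. First I would fix $Z_1,Z_2\in\M$ with $Z_2-Z_1\in\X_+$, set $Z:=Z_2-Z_1$, and note that $Z\in\M$ because $\M$ is a linear subspace of $\X$ by \eqref{calM}; moreover $Z\geq 0$ $\prob$-a.s.\ by the convention for the order on spaces of random variables. Since $\pi$ is linear (as remarked after \eqref{def:pi_Z}), $\pi(Z_2)-\pi(Z_1)=\pi(Z_2-Z_1)=\pi(Z)$, so it suffices to show $\pi(Z)\geq 0$ for every $Z\in\M$ with $Z\geq 0$ $\prob$-a.s.

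Next I would represent $Z$ through a portfolio: since $Z\in\M=\Span(1,S_1^1,\dots,S_1^N)$, there is $x\in\R^{N+1}$ with $Z=S_1^Tx$, and then $\pi(Z)=S_0^Tx$ by the definition \eqref{def:pi_Z}, which is legitimate because the Law of One Price \eqref{LawOnePrice} guarantees that $S_0^Tx$ does not depend on the chosen representative $x$.

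Then I would argue by contradiction: suppose $\pi(Z)<0$, i.e.\ $S_0^Tx<0$. In particular $S_0^Tx\leq 0$ and $S_1^Tx=Z\geq 0$ $\prob$-a.s., so the no-arbitrage implication in Assumption \ref{Ass1} yields $S_0^Tx=0$, contradicting $S_0^Tx<0$. Hence $\pi(Z)\geq 0$, and therefore $\pi(Z_1)\leq\pi(Z_2)$, which is exactly \eqref{eq:mon_increasing}.

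There is no genuine obstacle here; the argument is short. The only points that require a little care are the well-definedness of $\pi$, which is precisely what the Law of One Price provides, and translating the hypothesis $Z_2-Z_1\in\X_+$ into the $\prob$-a.s.\ inequality $S_1^Tx\geq 0$ needed to trigger the no-arbitrage condition — both are already guaranteed by the standing assumptions.
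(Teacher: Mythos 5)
Your proof is correct and follows essentially the same route as the paper: reduce monotonicity via linearity of $\pi$ to the claim that $\pi(Z)\geq 0$ for $Z\in\M\cap\X_+$, and rule out $\pi(Z)<0$ as a free-lunch arbitrage contradicting Assumption \ref{Ass1}. Your version is in fact slightly more explicit, since you invoke the displayed no-arbitrage implication directly through a portfolio representative $x$ with $Z=S_1^Tx$, whereas the paper appeals informally to the ``free lunch'' terminology of Remark \ref{rem:arbitrage}.
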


\begin{proof}
Let $Z_1,Z_2\in \M$ with $Z_2-Z_1\in \X_+$ and $Z_1\neq Z_2$ $\prob$-a.s., i.e., 
\begin{align*}
\prob(Z_2-Z_1>0)=1.
\end{align*} 
If $\pi(Z_2)<\pi(Z_1)$, i.e., $\pi(Z_2-Z_1)<0$ holds,
then $Z_2-Z_1\in \M$ is a free lunch - arbitrage as mentioned in Remark \ref{rem:arbitrage}, which contradicts the no-arbitrage-principle in Assumption \ref{Ass1}. Thus, $\pi(Z_2)\geq \pi(Z_1)$ must hold.
\end{proof}

\begin{remark}\label{rmk:Xplus_pi_pos}
The Law of One Price in Assumption 1 secures that $\pi$ is well-defined while the no-arbitrage principle implies monotonicity of $\pi$, as seen in the proof of Lemma \ref{lem:monotonicity_of_pi}. Especially, if there exist no arbitrage opportunities, then $\ker \pi \cap \X_+ =\{0\}$ and, thus, we have $\pi(Z)>0$ for all $Z\in (\M\cap\X_+)\backslash \{0\}$ by monotonicity of $\pi$. Conversely, monotonicity of $\pi$ does not imply that the no-arbitrage-principle is fulfilled, e.g. if $\X=\R^2=\M$ and $\pi(Z)=Z_2$, since $\ker \pi \cap \X_+ = \R_+\times \{0\}$.
\end{remark}

All eligible assets with the same price $m\in \R$ are summarized by
\begin{align}
\pi_m:=\{Z\in \M\mid  \pi(Z)=m\}. \label{def:pi_m}
\end{align} 
If there are $Z_1,Z_2\in \M$ with $Z_2-Z_1\in \X_+$ and $Z_1\neq Z_2$, then $\M \cap \X_+ \neq \{0\}$. Especially, we can consider $Z_1=0$. As in \cite{Baes.2020}, we assume the existence of $U\in \M\cap \X_+$ with strict positive price, i.e.,
\begin{align*}
\X_+ \cap \bigcup_{m>0} \pi_m \neq \emptyset.
\end{align*}
Since $\pi$ is a linear functional, we can simplified assume $\pi(U)=1$.

\begin{assumption}\label{ann:u_payoff}
For $\M$ being the subspace of $\X$ given by (\ref{calM}) and $\pi$ defined by \eqref{def:pi_Z}, there exists some positive payoff $U\in \M\cap \X_+$ with $\pi(U)=1$.
\end{assumption}

Note that the Law of One Price from Assumption \ref{Ass1} is automatically fulfilled by Assumption \ref{ann:u_payoff} because the existence of $\pi$ is required in Assumption \ref{ann:u_payoff}.

\begin{remark}\label{rem:constants_in_M}
Since $1\in \M$, we can set $U=S_1^0 =1$. In general, $m =m\cdot S^0_1\in \M$ for constant $m\in \R$ because $\Span S_1^0=\R\subseteq \M$. Indeed, since $S^0=(1,1)$, we have 
\begin{align*}
\pi(m)=m\cdot \pi(S_1^0)=m\cdot S_0^0 = m \cdot 1 =m.
\end{align*}
Note that we will work with arbitrary $U\in \M\cap \X_+$ in the following instead of fixing $U=1$ for application of our results to models in other research where $1\in \M$ is not assumed.
\end{remark}

As observed in \cite{Farkas.2015} for a topological vector space $\X$ and proved in \cite{Marohn.2020}, we can rewrite \eqref{def:pi_m} by Assumption \ref{ann:u_payoff} for $\X$ being a vector space, since no topological properties are required in the proof:

\begin{lemma}[see \cite{Farkas.2015}]\label{lem:pi_m_rewrite_U}
Let Assumption \ref{ann:u_payoff} be fulfilled. Accordingly to Assumption \ref{ann:u_payoff}, we consider an arbitrarily chosen element $U\in \M\cap \X_+$ with $\pi(U)=1$ and $m\in \R$. Then,
\begin{align*}
\pi_m= m\cdot U + \ker \pi
\end{align*}
for $\pi_m$ given by \eqref{def:pi_m}.
\end{lemma}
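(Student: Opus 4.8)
The plan is to verify the two set inclusions $\pi_m \subseteq m\cdot U + \ker\pi$ and $m\cdot U + \ker\pi \subseteq \pi_m$ separately, using only that $\pi\colon\M\to\R$ is a linear functional (as noted right after \eqref{def:pi_Z}) and that $\pi(U)=1$ by Assumption \ref{ann:u_payoff}. No topological structure on $\X$ enters the argument, which is exactly why the statement remains valid in the present vector-space setting and the proof carries over verbatim from \cite{Marohn.2020}.

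For the inclusion ``$\subseteq$'', I would take an arbitrary $Z\in\pi_m$, so that $\pi(Z)=m$, and consider the element $Z-m\cdot U\in\M$. By linearity of $\pi$ together with $\pi(U)=1$, we get $\pi(Z-m\cdot U)=\pi(Z)-m\,\pi(U)=m-m=0$, hence $Z-m\cdot U\in\ker\pi$. Writing $Z=m\cdot U+(Z-m\cdot U)$ then exhibits $Z$ as an element of $m\cdot U+\ker\pi$.

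For the reverse inclusion ``$\supseteq$'', I would take any $Z\in m\cdot U+\ker\pi$, i.e.\ $Z=m\cdot U+W$ with $W\in\ker\pi$. Again by linearity, $\pi(Z)=m\,\pi(U)+\pi(W)=m\cdot 1+0=m$, so $Z\in\pi_m$. Combining the two inclusions yields the claimed equality $\pi_m = m\cdot U + \ker\pi$.

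There is no substantial obstacle here: the statement is the elementary fact that the level set of a non-trivial linear functional at level $m$ is a coset of its kernel, and the base point $m\cdot U$ works precisely because of the normalization $\pi(U)=1$. The only point worth flagging is that the computation presupposes $\pi$ is a well-defined linear map; this is ensured by the Law of One Price, which — by the remark following Assumption \ref{ann:u_payoff} — is automatically implied by Assumption \ref{ann:u_payoff}, so the hypotheses of the lemma already guarantee everything needed.
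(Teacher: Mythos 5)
Your proof is correct and is exactly the standard argument intended here: the paper itself gives no proof but defers to \cite{Farkas.2015} and \cite{Marohn.2020}, where the same two-inclusion coset argument (using only linearity of $\pi$ and the normalization $\pi(U)=1$) is carried out. Nothing further is needed.
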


\begin{remark}
It can be shown for $\pi(U)\in \R\backslash\{0\}$ arbitrary that $\pi_m$ coincides with $\frac{m}{\pi(U)}\cdot U + \ker \pi $ for every $m\in \R$, see \cite{Farkas.2015}. 
\end{remark}

\begin{remark}\label{rem:ker_pi_neq_0}
The kernel of $\pi$ will be very important for our results. By Rank-Nullity Theorem,
\begin{align*}
\dim(\M)=\dim(\ker \pi) + \dim (\mathrm{Im}(\pi))
\end{align*} 
holds and, thus, $\dim(\ker \pi)=\dim (\M)- 1$, since $\mathrm{Im}(\pi)=\R$ for $\pi \not\equiv 0$ (this is excluded by existence of $U$ in Assumption \ref{ann:u_payoff}). Consequently, $\ker \pi \neq \{0\}$ because $\dim(\M)>1$ is supposed by \eqref{eq:dim_M}. 
\end{remark}

\section{Risk associated with regulatory constraints}

Let $\X$ be a vector space above $\R$, $\M\subseteq \X$ the space of eligible payoffs $S^i$ from \eqref{calM} fulfilling \eqref{eq:dim_M} and $\pi\colon \M\rightarrow \R$ a pricing functional given by \eqref{def:pi_Z}. The so called acceptance set specifies those positions which are allowed to be occupied. The decision maker has to decide which actions can be undertaken to modify the current financial position such that the new one is acceptable if the current one is not. On the other hand, if the current position is already acceptable, the decision maker could set money free while the position is remaining acceptable and the money could be used otherwise. A suitable set for describing all capital positions $X\in \X$ being acceptable capitalized with respect to regulatory constraints can be defined in the following way (see Baes et al. \cite{Baes.2020} and Artzner et al. \cite{Artzner.1999}):

\begin{definition}\label{def:acceptanceset}
Let $\X$ be a vector space over $\R$. A set $\A \subseteq \X$ is called \textit{acceptance set} if the following conditions hold:
\begin{enumerate}
\item $0\in \A$,
\item $\A \subsetneq \X$ (proper),
\item $\A+\X_+\subseteq \A$, i.e., for all $X\in \A$, $Y \in X + \X_+$ implies $Y\in \A$ (monotonicity).
\end{enumerate}
\end{definition}

Now, after we completed our financial setting, we consider the financial model (FM) in the space of capital positions $\X$ with the probability space ($\Omega, \F, \mathbb{P}$) (see Section \ref{sec:finmodel}) throughout the paper:\\ 

\fbox{\parbox{\linewidth}{
\begin{align*}
(\mathrm{FM}):\quad &S^i\in \R\times \X\ (i=0,1,\dots,N)\text{ is the }i\text{-th eligible asset from }\eqref{def:eligible_assets}\\
& \text{with }S^0 \text{ being the secure investment opportunity from \eqref{def:eligible_assets_secure}},\\
&\M \text{ is the subspace of }\X \text{ given by } \eqref{calM} \text{ fulfilling }\eqref{eq:dim_M},\\
&\pi\colon \M\rightarrow \R \text{ is the pricing functional given by }\eqref{def:pi_Z},\\
&\A\subseteq \X \text{ is an acceptance set according to Definition \ref{def:acceptanceset}}\\
&\text{and Assumption \ref{Ass1} is fulfilled.}
\end{align*}}}\\

\begin{remark}\label{rem:acceptance_sets}
Acceptance sets are nonempty by (i). Note that (i) and (iii) in Definition \ref{def:acceptanceset} provide $\X_+\subseteq \A$ and, especially, $m\cdot \1_\Omega \in \A$ for all $m\in \R_+$. For $U\in \M\cap \X_+$ as in Assumption \ref{ann:u_payoff}, we obtain by (iii)
\begin{align*}
\A + \R_+ U \subseteq \A,
\end{align*}
i.e., $U\in \rec \A$. More precisely, we have 
\begin{align}\label{eq:A_plus_mU_in_A}
\A + \R_+ U = \A.
\end{align}
Furthermore, we get $U\in \A$ by (i) and \eqref{eq:A_plus_mU_in_A}.
\end{remark}

The definition is motivated by Baes et al. \cite{Baes.2020}, but we do not assume $\A$ to be closed, in general. 

\begin{remark}
All properties of the risk measure that we are studying in Section \ref{sec:prop_risk_measure} can be shown without any assumption with respect to the closedness of $\A$. In \cite{Farkas.2015}, only $\emptyset\neq \A\subsetneq \X$ and $\A+\X_+\subseteq \A$ are required by an acceptance set (which is called a capital adequacy test there) and $(\A,\M,\pi)$ is called \textit{risk measurement regime}. Farkas et al. argue that their stated properties can be united with expectations from nontrivial capital adequacy tests. That means, especially, that positions are automatically acceptable if they dominate any acceptable position. Baes et al. observe in \cite{Baes.2020} that their requirements with additionally assumption $0\in \A$ and closedness of $\A$ are widely assumed in practice.  By $0\in \A$, it is acceptable if a financial position is constant zero. This property can be easily reached for other acceptance sets by translation. Note that convexity of $\A$ is often required in other frameworks, but some essential acceptance sets do not fulfill this, see the following example. Further terminology was introduced by several authors, like \textit{convex acceptance sets} by F\"ollmer and Schied in \cite{Follmer.2002} and Frittelli and Rosazza Gianin in \cite{Frittelli.2002} for $\A$ being convex, \textit{conic acceptance set} by Farkas et al. in \cite{Farkas.2015} for $\A$ being a cone and \textit{coherent acceptance sets} by Artzner et al. in \cite{Artzner.1999} for $\A$ being a convex cone.
\end{remark}

Acceptance sets are mostly given by risk measures in practice. An axiomatic approach to define (coherent) risk measures is introduced by Artzner et al. in \cite{Artzner.1999} and generalized to convex risk measures by Föllmer and Schied in \cite{Follmer.2002} and Frittelli and Rosazza Gianin in \cite{Frittelli.2002}: 

\begin{definition}[see \cite{Artzner.1999},\cite{Follmer.2002}, \cite{Frittelli.2002}]\label{def:risk_measure}
Let $\X$ be a real vector space. A functional $\rho\colon \X\rightarrow \overline{\R}$ is called \textit{(monetary) risk measure} if the following conditions hold:
\begin{enumerate}
\item $\forall X,Y\in \X:\ Y-X\in \X_+ \Rightarrow \rho(Y)\leq \rho(X)$ (monotonicity),
\item $\forall m\in \R, \forall X\in \X:\ \rho(X+m)=\rho(X)-m$ (translation invariance).  
\end{enumerate} 
A (monetary) risk measure $\rho$ is called \textit{convex risk measure} if it fulfills additionally
\begin{align}\label{eq:rho_convex}
\forall X,Y\in \X, \forall \lambda \in [0,1]:\ \rho(\lambda X + (1-\lambda)Y) \leq \lambda \rho(X) + (1-\lambda)\rho(Y).
\end{align}
$\rho$ is called \textit{coherent risk measure} if it is a convex risk measure fulfilling the following property of positive homogeneity:
\begin{align}\label{eq:rho_pos_hom}
\forall X \in \X, \forall \lambda \in \R_+:\quad \rho(\lambda X) = \lambda \rho(X).
\end{align}
\end{definition}

\begin{remark}\label{rem:def_riskmeasure}
In Definition \ref{def:risk_measure}, the property translation invariance is also known as \textit{cash invariance}. It is important to mention that we consider functionals from $\X$ into the space of extended real values $\overline \R = \R \cup \{+\infty\} \cup \{-\infty\}$ with the properties (i) and (ii) in Definition \ref{def:risk_measure}. Furthermore, there are different definitions of risk measures in the literature. While some authors define a risk measure as any map $\rho\colon \X\rightarrow \R$ (see Artzner et al. \cite[Def. 2.1]{Artzner.1999}), i.e., an arbitrary map with overall finiteness, other authors define risk measures as a map $\rho\colon \X\rightarrow \R$ with the properties in Definition \ref{def:risk_measure} (see Föllmer, Schied \cite[Def. 4.1]{Follmer.2016}) or as a map $\rho\colon \X\rightarrow \R\cup\{+\infty\}$ with the properties in Definition \ref{def:risk_measure}, but  assume $\rho(0)\in \R$ (see Föllmer, Schied \cite[Def. 2.1]{Follmer.2010}) additionally to the required properties in our more general definition. Note that $\rho(0)=0$ for positive homogeneous risk measures (see \eqref{eq:rho_pos_hom}), since
\begin{align*}
\forall \lambda \in \R_+:\quad \rho(0)=\rho(\lambda \cdot 0) = \lambda \rho(0)
\end{align*}
holds.
\end{remark}

Convex risk measures are important because only these take diversification into account. More exactly, diversification means that the decision maker invests a portion $\lambda \in [0,1]$ into a possible strategy or investment opportunity with output $X\in \X$ and the remaining part into another one with output $Y\in \X$. The convexity of the risk measure (see \eqref{eq:rho_convex}) implies that diversification should not increase the risk, which can be expressed by the (for monetary risk measures as given by Definition \ref{def:risk_measure}) equivalent condition of \textit{quasi-convexity} (see \cite{Farkas.2018}), i.e., 
\begin{align*}
\forall X,Y\in \X, \forall \lambda \in [0,1]:\quad \rho(\lambda X + (1-\lambda) Y) \leq \max\{\rho(X),\rho(Y)\}.
\end{align*}
As mentioned in Example \ref{expl:Lp space}, typical spaces of capital positions are $\L^p$-space, especially with $1\leq p \leq +\infty$, see e.g. \cite{Frittelli.2002} and \cite{Follmer.2002}. Biagini and Frittelli showed in \cite{Biagini.2009} that there are no finite convex risk measures for $\X=\L^p$ with $0\leq p < 1$ which are not constant. 

\begin{example}[see F\"ollmer, Schied \cite{Follmer.2016} and Baes et al. \cite{Baes.2020}]\label{expl:VaR_CVaR_acc_sets} Let $(\Omega,\F,\prob)$ be a probability space. Consider some confidence level $\alpha \in (0,1)$. Furthermore, let $\rho\colon \X\rightarrow \overline{\R}$ be a (monetary) risk measure with $\rho(0)\leq 0$. Then,
\begin{align*}
\A_\rho:=\{X\in \X\mid \rho(X)\leq 0\}
\end{align*}
is an acceptance set. $\A_\rho$ is closed if $\rho$ is a coherent risk measure because in that case $\rho$ is continuous and $\A_\rho$ is the pre-image of $\R_-$, see Artzner et al. \cite{Artzner.1999}, Prop. 2.2. Furthermore, $\rho$ is also continuous and, thus, $\A_\rho$ closed  if $(\X,\norm{ \cdot })$ is a Banach lattice, e.g. $\L^p$ with $\norm{ \cdot }_{\L^p}$ or $\L^\infty$ with $\norm{ \cdot }_{\L^\infty}$, and $\rho$ is finite, i.e., $\rho\colon \X \rightarrow \R$, see \cite{Liebrich.2017}. $\A_\rho$ is convex (a cone) if and only if $\rho$ is convex (positively homogeneous), see \cite{Follmer.2016}, Prop. 4.6. Thus, $\rho$ is a coherent risk measure if and only if $\A_\rho$ is a convex cone.

In the following, $X\in \X$ describes a random payoff with a return distribution. An example for a risk measure is the \textit{Value-at-Risk of $X\in \X$ at the level $\alpha$}, which is given by
\begin{align*}
\var_\alpha(X):=\inf\{m\in \R\mid  \prob(X+m<0)\leq \alpha\},
\end{align*}
see \cite[Def. 4.45]{Follmer.2016}. It can be interpreted as the smallest amount of capital that has to be added to the financial position $X$ to reach a probability of a loss that is not higher than $\alpha$. The corresponding acceptance set $\A_{\var_\alpha}$ is a cone, since $\var_\alpha$ is a positively homogeneous (monetary) risk measure. Therefore, $\A_{\var_\alpha}$ is a conic acceptance set (see \cite{Follmer.2016}), which does not have to be convex, since $\var_\alpha$ is not convex.

Another example for $\rho$ is the \textit{Average-Value-at-Risk of $X\in \X$ at the level $\alpha$} (also known as \textit{Conditional-Value-at-Risk} or \textit{Expected Shortfall}, see \cite{Follmer.2016}), which is given by
\begin{align}\label{def:cvar}
\avar_\alpha(X):=\frac{1}{\alpha}\int_0^\alpha \var_s(X)\ ds,
\end{align}
see \cite[Def. 4.48]{Follmer.2016} and \cite{Pflug.2000}. The corresponding acceptance set $\A_{\avar_\alpha}$ is a closed, convex cone and, therefore,  a coherent acceptance set, since $\avar_\alpha$ is a positively homogeneous, quasi-convex (monetary) risk measure (see \cite[Theorem 4.52]{Follmer.2016}).  Note that, as mentioned before Example \ref{expl:VaR_CVaR_acc_sets}, quasi-convex risk measures are also convex. Consequently, quasi-convex risk measures are coherent risk measures if they are positive homogeneous.
 
 In general, institution do not have to fulfill just one regulatory precondition. If the single preconditions are described by acceptance sets $\A_j$, $j=1,\dots, m$, then, obviously, 
 \begin{align*}
\A_{reg}:=\bigcap_{j=1}^m \A_j
\end{align*} 
is an acceptance set, too. 
\end{example}

A typical assumption in financial mathematics is that it can not be found a \textit{good deal}:

\begin{assumption}[absence of good deals] \label{ann:absence_good_deals}
Consider (FM). It holds $\pi(Z)>0$ for all payoffs ${Z \in (\A\cap \M)\backslash \{0\}}$.
\end{assumption}

The following lemma gives a characterization for the absence of good deals. Baes et al. derived the following result in \cite[Prop. 2.6 (iii)]{Baes.2020} for closed acceptance sets in a locally convex Hausdorff topological vector space above $\R$ fulfilling the first axiom of countability. However, it is possible to show a corresponding result for our setting, as well.

\begin{lemma}\label{lem:gooddeal}
Consider \emph{(FM)}. Let Assumption \ref{ann:u_payoff} be fulfilled and
\begin{align}\label{eq:span_U_no_good_deals}
\A\cap (-\R_> U) = \emptyset
\end{align}
with $U\in \M\cap \X_+$ being the payoff according to Assumption \ref{ann:u_payoff}. Then, the following conditions are equivalent:
\begin{enumerate}[font = \normalfont]
\item $\nexists Z \in (\A\cap \M)\backslash \{0\}: \  \pi(Z)\leq 0$,
\item $\A\cap \ker \pi = \{0\}$.
\end{enumerate}
\end{lemma}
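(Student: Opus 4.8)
The plan is to prove the two implications separately. For (i) $\Rightarrow$ (ii): suppose (i) holds. Since $0 \in \A$ (acceptance set) and $0 \in \ker\pi$ (linear functional), we always have $\{0\} \subseteq \A \cap \ker\pi$, so only the reverse inclusion needs proof. Assume for contradiction that there is some $Z \in (\A \cap \ker\pi) \setminus \{0\}$. Then $Z \in \A \cap \M$ (since $\ker\pi \subseteq \M$), $Z \neq 0$, and $\pi(Z) = 0 \leq 0$. This directly contradicts (i), which asserts no such element exists. Hence $\A \cap \ker\pi = \{0\}$.

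For (ii) $\Rightarrow$ (i): assume (ii) holds, and suppose for contradiction that there exists $Z \in (\A \cap \M) \setminus \{0\}$ with $\pi(Z) \leq 0$. Set $m := \pi(Z) \leq 0$. I would split into two cases according to whether $m = 0$ or $m < 0$. If $m = 0$, then $Z \in \ker\pi$ and $Z \in \A$, so $Z \in \A \cap \ker\pi$ with $Z \neq 0$, contradicting (ii). If $m < 0$, the idea is to use the structure of $\pi_m$ from Lemma~\ref{lem:pi_m_rewrite_U} to produce an element of $\A \cap \ker\pi$ other than $0$, or else violate the extra hypothesis \eqref{eq:span_U_no_good_deals}. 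By Lemma~\ref{lem:pi_m_rewrite_U}, $Z \in \pi_m = mU + \ker\pi$, so $Z = mU + W$ for some $W \in \ker\pi$. Then $W = Z - mU = Z + |m|U$. Since $Z \in \A$ and $|m| U \in \R_+ U$, Remark~\ref{rem:acceptance_sets} (specifically $\A + \R_+ U = \A$) gives $W \in \A$. Thus $W \in \A \cap \ker\pi$, so by (ii) we must have $W = 0$, i.e. $Z = mU = -|m|U \in -\R_> U$. But then $Z \in \A \cap (-\R_> U)$, contradicting the standing hypothesis \eqref{eq:span_U_no_good_deals}. This completes the case $m < 0$ and hence the proof.

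The main obstacle is the case $m < 0$ in the second implication: one must see that the hypothesis \eqref{eq:span_U_no_good_deals} is exactly what rules out the degenerate possibility that $Z$ is a negative multiple of $U$ (which would lie in $\A$ by monotonicity yet have negative price without forcing anything in $\ker\pi$ to be nonzero). The translation $W = Z + |m|U$ combined with the recession property $\A + \R_+ U = \A$ is the key manipulation, and recognizing that $W = 0$ forces $Z \in -\R_> U$ is the crux. Everything else is routine bookkeeping with the definitions of $\A$, $\ker\pi$, and $\pi_m$.
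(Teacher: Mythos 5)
Your proof is correct. The paper itself gives no proof of this lemma (it only cites Baes et al., Prop.\ 2.6(iii), noting the argument carries over without closedness), and your argument is exactly the intended one: the implication (i) $\Rightarrow$ (ii) is immediate from $\ker\pi\subseteq\M$, while for (ii) $\Rightarrow$ (i) the decomposition $Z=mU+W$ with $W=Z+|m|U\in\A\cap\ker\pi$ via $\A+\R_+U=\A$, followed by the observation that $W=0$ forces $Z\in\A\cap(-\R_>U)$, is precisely where the hypothesis \eqref{eq:span_U_no_good_deals} is needed.
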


An example that the equivalence in Lemma \ref{lem:gooddeal} does not hold if \eqref{eq:span_U_no_good_deals} is not fulfilled can be found in \cite[Example 2.9]{Baes.2020}. 

\begin{remark}
In Lemma \ref{lem:gooddeal}, (i) $\Rightarrow$ (ii) holds even if \eqref{eq:span_U_no_good_deals} is not fulfilled. Sufficient conditions for $\eqref{eq:span_U_no_good_deals}$  are also observed by Baes et al. in \cite{Baes.2020}, Prop. 2.6, namely $\A\cap(-\A)=\{0\}$ and $\A\cap (-\X_+)=\{0\}$. Note that, although $\A$ is assumed to be closed in \cite{Baes.2020}, the closedness is not used in the proof there. Consequently, the result holds in our setting, as well.
\end{remark}

To pass an acceptability test given by $\A$, the decision maker is obviously interested in the minimal capital amount that has to be raised and invested into the assets $S^i$, $i=0,1,\dots,N$ from \eqref{def:eligible_assets} to reach a new capital position $X^0\in \A$. Hence, following \cite{Farkas.2015} and \cite{Baes.2020}, we consider the nonlinear functional $\rho_{\A,\M,\pi}\colon \X \rightarrow \overline{\R}$ with arbitrary subset $\A\subseteq \X$ and subspace $\M\subseteq \X$ given by
\begin{align}
\rho_{\A,\M,\pi}(X):=\inf \{\pi(Z)\mid Z\in \M, X+Z\in \A\}.\label{def:rho}
\end{align}
Functionals from type \eqref{def:rho} were studied and called \textit{capital requirement} in \cite{Scandolo.2004} and \cite{Frittelli.2006}. In a similiar financial setting as here with $\A$ being an acceptance set and $\M$ defined as in \eqref{calM}, the functional \eqref{def:rho} was studied by Farkas et al. in \cite{Farkas.2015} and by Baes et al. in \cite{Baes.2020}, where $\rho_{\A,\M,\pi}(X)$ describes the capital requirement for the situation we mentioned at the beginning. We call $\rho_{\A,\M,\pi}$ the \textit{risk measure on $\X$ associated to $\A$ and $\M$}. Note that there is no probability measure necessary for defining $\rho_{\A,\M,\pi}$. In the case of $\rho_{\A,\M,\pi}(X)<0$, the current capital position $X$ can be changed under setting money free to reach acceptability which does not mean that $X$ is already acceptable (even if $\rho_{\A,\M,\pi}(X)=0$), i.e., 
\begin{align*}
\rho_{\A,\M,\pi}(X)\leq 0 \quad \centernot\Longrightarrow \quad X\in \A,
\end{align*}
see \cite[Remark 4.4]{Marohn.2020}.
On the other hand,
\begin{align*}
X\in \A \quad \Longrightarrow \quad \rho_{\A,\M,\pi}(X)\leq 0
\end{align*}
because $0\in \M$ and $\pi(0)=0$.

\section{Properties of the risk measure $\rho_{\A,\M,\pi}$} \label{sec:prop_risk_measure}

In this section, we study $\rho_{\A,\M,\pi}$  given by \eqref{def:rho} with respect to (FM) in more detail. 
 First, we recall a result by Farkas et al. \cite{Farkas.2015} (a proof can be found in \cite{Marohn.2020}) that $\rho_{\A,\M,\pi}$ is monotone and translation invariant, i.e., a risk measure, indeed (see Definition \ref{def:risk_measure}). Although Farkas et al. \cite{Farkas.2015} worked in a topological vector space, the result also works for vector spaces, since no topological properties are used in the proof.
 
\begin{lemma}[see Farkas et al. {\cite[Lemma 2.8]{Farkas.2015}}]\label{lem:rho_riskmeasure}
Consider \emph{(FM)}. Let Assumption \ref{ann:u_payoff} be fulfilled and $\rho_{\A,\M,\pi}\colon \X \rightarrow \overline{\R}$ be the functional given by \eqref{def:rho}. Then, the following holds:
\begin{enumerate}[font = \normalfont]
\item $\rho_{\A,\M,\pi}(X)\geq \rho_{\A,\M,\pi}(Y)$ for all $X,Y\in \X$ with $Y\in X + \X_+$,
\item $\rho_{\A,\M,\pi}(X+Z)=\rho_{\A,\M,\pi}(X)-\pi(Z)$ for all $X\in \X, Z\in \M$,
\end{enumerate}
\end{lemma}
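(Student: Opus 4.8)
The plan is to read both assertions directly off the definition \eqref{def:rho} by analysing, for each $X\in\X$, the feasible set $F(X):=\{Z\in\M\mid X+Z\in\A\}$ over which the infimum is taken, using the convention $\inf\emptyset=+\infty$ in $\overline{\R}$. For (i) I would establish the set inclusion $F(X)\subseteq F(Y)$ whenever $Y\in X+\X_+$ and then conclude by monotonicity of the infimum under set inclusion; for (ii) I would exhibit a bijection between $F(X+Z_0)$ and $F(X)$ given by translation inside the subspace $\M$ and exploit the linearity of $\pi$.

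For (i): given $Z\in F(X)$, i.e.\ $X+Z\in\A$, write $Y+Z=(X+Z)+(Y-X)$; since $Y-X\in\X_+$, property (iii) of an acceptance set (Definition \ref{def:acceptanceset}), namely $\A+\X_+\subseteq\A$, yields $Y+Z\in\A$, so $Z\in F(Y)$. Hence $F(X)\subseteq F(Y)$ and
\[
\rho_{\A,\M,\pi}(Y)=\inf\{\pi(Z)\mid Z\in F(Y)\}\leq\inf\{\pi(Z)\mid Z\in F(X)\}=\rho_{\A,\M,\pi}(X);
\]
the inequality holds in $\overline{\R}$, in particular also when $F(X)=\emptyset$ (so the right-hand side is $+\infty$) or when some infimum equals $-\infty$.

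For (ii): fix $X\in\X$ and $Z_0\in\M$. Because $\M$ is a linear subspace (see \eqref{calM}), the map $Z\mapsto Z+Z_0$ sends $\M$ bijectively onto $\M$, and it restricts to a bijection from $F(X+Z_0)$ onto $F(X)$, since $X+Z_0+Z\in\A\Leftrightarrow X+(Z+Z_0)\in\A$. Linearity of $\pi$ gives $\pi(Z)=\pi(Z+Z_0)-\pi(Z_0)$ for all $Z\in\M$, so
\[
\rho_{\A,\M,\pi}(X+Z_0)=\inf_{Z\in F(X+Z_0)}\pi(Z)=\inf_{W\in F(X)}\big(\pi(W)-\pi(Z_0)\big)=\Big(\inf_{W\in F(X)}\pi(W)\Big)-\pi(Z_0)=\rho_{\A,\M,\pi}(X)-\pi(Z_0),
\]
where pulling out the finite constant $-\pi(Z_0)$ (finite because $\pi\colon\M\to\R$) is legitimate in $\overline{\R}$, and the case $F(X)=\emptyset$ again makes both sides $+\infty$.

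No topology on $\X$ is used anywhere, which is precisely why the statement carries over from the topological-vector-space setting of \cite{Farkas.2015} to a bare real vector space, and Assumption \ref{ann:u_payoff} is not actually invoked in either computation beyond being part of the standing hypotheses. I therefore do not expect a genuine obstacle here; the only points requiring a little care are the bookkeeping with the convention $\inf\emptyset=+\infty$ and with $\pm\infty$-valued infima, and the observation that the change of variables in (ii) is a bijection exactly because $\M$ is closed under addition.
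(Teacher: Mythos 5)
Your proof is correct and is exactly the standard argument that the paper defers to (it cites Farkas et al.\ and \cite{Marohn.2020} rather than reproving the lemma): monotonicity of the feasible set under Definition \ref{def:acceptanceset}(iii) for (i), and the translation bijection on the subspace $\M$ together with linearity of $\pi$ for (ii). Your side remarks — the $\inf\emptyset=+\infty$ bookkeeping and the observation that no topology is used, which is precisely the paper's justification for stating the lemma in a bare vector space — are also consistent with the paper's discussion.
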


\begin{remark}\label{rem:rho_normalized}
Property (i) means that $\rho_{\A,\M,\pi}$ is monotone while (ii) is also known as \textit{cash invariance or translation invariance} and has important consequences. Let $X\in \X$ arbitrary and $U\in \M\cap\X_+$ given as in Assumption \ref{ann:u_payoff}. By (ii), we have especially
\begin{align*}
\rho_{\A,\M,\pi}(X+mU)=\rho_{\A,\M,\pi}(X)-m
\end{align*}
for all $m\in \R$ and, thus,
\begin{align*}
\rho_{\A,\M,\pi}(X+\rho_{\A,\M,\pi}(X)U)=0.
\end{align*}
If $\A$ fulfills \eqref{eq:span_U_no_good_deals}, see Lemma \ref{lem:gooddeal}, and Assumption \ref{ann:absence_good_deals} is fulfilled, then $\rho_{\A,\M,\pi}$ is \textit{normalized}, i.e., $\rho_{\A,\M,\pi}(0)=0$, which implies $0\in \bd_{-U} (\A)$ and, thus, $0\in \bd (\A)$ for topological vector spaces $\X$. Consequently, 
\begin{align*}
\forall Z \in \M:\ \rho_{\A,\M,\pi}(Z)=\rho_{\A,\M,\pi}(0)-\pi(Z) = -\pi(Z).
\end{align*}
Thus, (ii) is equivalent to \textit{cash additivity}, i.e.,
\begin{align}\label{eq:rho_cashadd}
\forall X\in \X\ \forall Z\in \M: \quad \rho_{\A,\M,\pi}(X+Z)=\rho_{\A,\M,\pi}(X)+\rho_{\A,\M,\pi}(Z).
\end{align} 
\end{remark}

By Lemma \ref{lem:rho_riskmeasure}, the functional $\rho_{\A,\M,\pi}$ from \eqref{def:rho} is a (monetary) risk measure, see Definition \ref{def:risk_measure}. Note that we do not assume $\rho(0)\in \R$ in contrast to some other definitions for risk measures in the literature, see Remark \ref{rem:def_riskmeasure}. Especially, $\rho_{\A,\M,\pi}$ is no coherent risk measure (see Definition \ref{def:risk_measure}) in general, since $\rho_{\A,\M,\pi}$ is not always normalized, i.e., $\rho_{\A,\M,\pi}(0)\neq 0$, and, thus, $\rho_{\A,\M,\pi}$ is not positive homogeneous (see Remark \ref{rem:def_riskmeasure}). Moreover, $\rho_{\A,\M,\pi}$ is not always convex. Nevertheless, $\rho_{\A,\M,\pi}$ is a coherent risk measure if $\A$ is a convex cone (see Lemma \ref{lem:further_properties_of_rho}).

$\rho_{\A,\M,\pi}$ is a translation invariant functional by Lemma \ref{lem:rho_riskmeasure}(ii). As mentioned in the introduction, $\rho_{\A,\M,\pi}$ can be seen as a generalization of the scalarizing functional (\ref{funcak00}), which has monotonicity and translation invariance properties, too.  More exactly, $\rho_{\A,\M,\pi}$ can be reduced to a functional of that type, which is given by the following important relation between $\rho_{\A,\M,\pi}$ and the payoff $U\in\M\cap \X_+$ according to Assumption \ref{ann:u_payoff}, the so called \textit{Reduction Lemma}:

\begin{lemma}[see Farkas et al. \cite{Farkas.2015}, Reduction Lemma 2.10]\label{lem:reductionlemma}
Consider \emph{(FM)}. Let Assumption \ref{ann:u_payoff} be fulfilled by $U\in \M\cap \X_+$ and $\rho_{\A,\M,\pi}\colon \X \rightarrow \overline{\R}$ be the functional given by \eqref{def:rho}. Then,
\begin{align*}
\forall X\in \X: \quad \rho_{\A,\M,\pi}(X)=\inf\{m\in \R\mid X+mU \in \A+\ker\pi\}.
\end{align*}
\end{lemma}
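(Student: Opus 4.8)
The plan is to unfold the definition of $\rho_{\A,\M,\pi}$ from \eqref{def:rho} and use the structure of $\M$ relative to $\ker\pi$ provided by Lemma \ref{lem:pi_m_rewrite_U}. First I would write
\[
\rho_{\A,\M,\pi}(X)=\inf\{\pi(Z)\mid Z\in\M,\ X+Z\in\A\}
=\inf_{m\in\R}\,\inf\{\pi(Z)\mid Z\in\M,\ \pi(Z)=m,\ X+Z\in\A\},
\]
i.e.\ I partition the feasible set $\{Z\in\M:X+Z\in\A\}$ according to the value $m=\pi(Z)$. On each slice the value of the objective is constant, equal to $m$, so the inner infimum is $m$ whenever the slice is nonempty and $+\infty$ otherwise. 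Hence
\[
\rho_{\A,\M,\pi}(X)=\inf\{m\in\R\mid \exists Z\in\M\ \text{with}\ \pi(Z)=m\ \text{and}\ X+Z\in\A\}.
\]

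Next I would rewrite the slice condition. By Lemma \ref{lem:pi_m_rewrite_U}, $\{Z\in\M:\pi(Z)=m\}=\pi_m=mU+\ker\pi$, where $U\in\M\cap\X_+$ is the payoff from Assumption \ref{ann:u_payoff} and $\ker\pi\subseteq\M$. Therefore the existence of a feasible $Z$ on the $m$-slice with $X+Z\in\A$ is equivalent to: there exists $W\in\ker\pi$ with $X+mU+W\in\A$, i.e.\ $X+mU\in\A-\ker\pi$. Since $\ker\pi$ is a linear subspace, $\A-\ker\pi=\A+\ker\pi$, so the slice is nonempty exactly when $X+mU\in\A+\ker\pi$. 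Substituting this back yields
\[
\rho_{\A,\M,\pi}(X)=\inf\{m\in\R\mid X+mU\in\A+\ker\pi\},
\]
which is the claimed identity. I would also note that both infima range over the same index set and adopt the usual convention $\inf\emptyset=+\infty$, so the equality holds including the case where $\rho_{\A,\M,\pi}(X)=+\infty$; the case $-\infty$ causes no trouble since the right-hand side is then also unbounded below.

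The only genuinely delicate point is the interchange/decomposition of the infimum — making sure that restricting $Z$ to lie in the affine slice $mU+\ker\pi$ captures \emph{all} $Z\in\M$ with $\pi(Z)=m$ and no others, which is exactly the content of Lemma \ref{lem:pi_m_rewrite_U} (valid for $\X$ merely a vector space, as remarked there). Everything else is a direct set-theoretic manipulation using linearity of $\pi$ and the subspace property of $\ker\pi$; no topological or closedness assumptions on $\A$ enter, consistent with the paper's stated aim.
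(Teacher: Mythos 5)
Your proposal is correct and follows essentially the same route the paper indicates: it first establishes the representation $\rho_{\A,\M,\pi}(X)=\inf\{m\in \R\mid (X+\pi_m)\cap \A \neq \emptyset\}$ by slicing the feasible set according to the price level, and then applies Lemma \ref{lem:pi_m_rewrite_U} to rewrite $\pi_m$ as $mU+\ker\pi$, using only the subspace property of $\ker\pi$ and no topology. The treatment of the empty and unbounded cases via $\inf\emptyset=+\infty$ is also handled properly.
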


The proof of Lemma \ref{lem:reductionlemma} in \cite{Farkas.2015} does not require any topological properties: it only uses the representation 
\begin{align*}
\rho_{\A,\M,\pi}(X)=\inf\{m\in \R\mid (X + \pi_m)\cap \A \neq \emptyset \}
\end{align*}
and Lemma \ref{lem:pi_m_rewrite_U}. Hence, we formulated Lemma \ref{lem:reductionlemma} for vector spaces instead of topological vector spaces as in \cite{Farkas.2015}.

\begin{remark}\label{bem:rho_finite_optz_empty}
Note that $\rho_{\A,\M,\pi}(X)\in \R$ does not necessary lead to the existence of a movement $Z\in \M$ such that $X+Z\in \A$ with $\pi(Z)=\rho_{\A,\M,\pi}(X)$, see e.g. \cite{Marohn.2020}, Example 4.15. By Lemma \ref{lem:reductionlemma}, minimal costs for reaching acceptability can be determined by just considering movements $mU\in \M$ with $m\in \R$ and $U\in \M\cap \X_+$ the payoff according to Assumption \ref{ann:u_payoff} instead of all movements $Z\in \M$. Therefore, $\rho_{\A,\M,\pi}(X)$ is given by the minimal $m\in \R$ such that the movement through $mU$ results in a position in $\A+\ker \pi$. Consequently, $\rho_{\A,\M,\pi}$ can be reduced to a functional from the type \eqref{funcak00} through
\begin{align*}
\rho_{\A,\M,\pi}(X)=\inf\{m\in \R\mid X\in \A+\ker\pi-mU\}=\varphi_{\A+\ker \pi,-U}(X).
\end{align*} 
\end{remark}

The set $\A+\ker \pi$ will be important for our studies and can be naturally interpreted as the set of those capital positions that can be made acceptable by zero costs. $\A+\ker \pi$ is nonempty, since $0\in \A+\ker \pi$. Properties of $\A+\ker \pi$ with respect to the direction $U$ will be from special interest. The following lemma states that $\A+\ker \pi$ fulfilles the monotonicity property for acceptance sets:

\begin{lemma}\label{lem:monotonicity_of_A_ker_pi}
Consider \emph{(FM)}. Let $X\in \A+\ker \pi$. Then, $Y\in \A+\ker \pi$ for every $Y\in \X$ with $Y-X\in \X_+$.
\end{lemma}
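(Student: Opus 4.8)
The plan is to unfold the definition of $\A + \ker\pi$ and reduce the monotonicity claim to the monotonicity property (iii) of the acceptance set $\A$ from Definition \ref{def:acceptanceset}. First I would write $X \in \A + \ker\pi$ as $X = A + K$ for some $A \in \A$ and $K \in \ker\pi$. Given $Y \in \X$ with $Y - X \in \X_+$, I want to exhibit a decomposition $Y = A' + K'$ with $A' \in \A$ and $K' \in \ker\pi$.

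\medskip

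The natural choice is to keep the kernel component fixed, i.e. put $K' := K$ and $A' := Y - K$. Then $A' = Y - K = (Y - X) + (X - K) = (Y - X) + A$. Since $Y - X \in \X_+$ and $A \in \A$, the monotonicity property $\A + \X_+ \subseteq \A$ immediately gives $A' = A + (Y-X) \in \A$. Together with $K' = K \in \ker\pi$ this yields $Y = A' + K' \in \A + \ker\pi$, which is exactly what is claimed.

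\medskip

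There is essentially no obstacle here: the only structural facts used are that $\ker\pi$ is a fixed set (so we may reuse the same element $K$) and that $\A$ is closed under adding positive elements. The argument is a one-line manipulation once the right decomposition is spotted; the ``hard part'' is merely recognizing that one should not touch the kernel component and should absorb the increment $Y - X$ into the acceptance-set component. I would present it as: take $A \in \A$, $K \in \ker\pi$ with $X = A + K$; then $Y = \bigl(A + (Y - X)\bigr) + K$, and $A + (Y-X) \in \A$ by Definition \ref{def:acceptanceset}(iii) since $Y - X \in \X_+$, while $K \in \ker\pi$, so $Y \in \A + \ker\pi$.
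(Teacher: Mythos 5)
Your proof is correct and follows essentially the same route as the paper: both decompose $X = X^0 + Z^0$ with $X^0\in\A$, $Z^0\in\ker\pi$, keep the kernel component fixed, and absorb the increment $Y-X$ into the acceptance-set component via Definition \ref{def:acceptanceset}(iii). No differences worth noting.
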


\begin{proof}
Let $X,Y\in \X$ with $X\in \A+\ker \pi$ and $Y-X\in \X_+$. Then, there exist $X^0\in \A$ and $Z^0\in \ker \pi$ with $X^0=X-Z^0$. Consequently, $Y-Z^0\in \A$ by monotonicity of $\A$, see Definition \ref{def:acceptanceset}(iii), since 
\begin{align*}
(Y-Z^0)-X^0 =  (Y-Z^0)-(X-Z^0) = Y-X \in \X_+.
\end{align*}
Thus, $Y=(Y-Z^0)+Z^0\in \A+\ker \pi$ holds.
\end{proof}

\begin{remark}
Note that the proof of Lemma \ref{lem:monotonicity_of_A_ker_pi} does not trivially follow by monotonicity of $\A$ in Definition \ref{def:acceptanceset}(iii), since the given position $X\in \A+\ker \pi$ does not have to be an element of the acceptance set $\A$.  
\end{remark}

For later proofs, the following corollary applies Lemma \ref{lem:monotonicity_of_A_ker_pi} for elements that can be reached from a given position in $\A+\ker \pi$ in direction of an element $U\in \M\cap \X_+$.

\begin{corollary}\label{cor:monotonicity_of_A_ker_pi_for_mU}
Consider \emph{(FM)}. Let $X\in \A+\ker \pi$. Then, 
\begin{align*}
\forall m\in \R_+, \forall U\in \M\cap \X_+:\quad X+mU\in \A+\ker \pi.
\end{align*}
More precisely,
\begin{align*}
\A+\ker \pi + \R_+U = \A + \ker \pi.
\end{align*} 
\end{corollary}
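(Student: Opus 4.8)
The plan is to derive Corollary~\ref{cor:monotonicity_of_A_ker_pi_for_mU} directly from Lemma~\ref{lem:monotonicity_of_A_ker_pi}, using the fact that $U \in \X_+$ forces $mU \in \X_+$ for every $m \in \R_+$, together with the elementary observation that $\X_+$ is a cone (so it is closed under non-negative scaling). First I would fix $X \in \A + \ker \pi$, $m \in \R_+$ and $U \in \M \cap \X_+$, and set $Y := X + mU$. Since $U \in \X_+$ and $\X_+$ is a cone, we have $mU \in \X_+$, hence $Y - X = mU \in \X_+$. Lemma~\ref{lem:monotonicity_of_A_ker_pi} then immediately yields $Y = X + mU \in \A + \ker \pi$, which is the first claim.

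For the sharper equality $\A + \ker \pi + \R_+ U = \A + \ker \pi$, I would argue by mutual inclusion. The inclusion ``$\subseteq$'' is exactly the statement just proved: any element of $\A + \ker \pi + \R_+ U$ has the form $X + mU$ with $X \in \A + \ker \pi$ and $m \in \R_+$, and this lies in $\A + \ker \pi$. The reverse inclusion ``$\supseteq$'' is trivial: for $X \in \A + \ker \pi$, write $X = X + 0 \cdot U$ with $0 \in \R_+$, so $X \in \A + \ker \pi + \R_+ U$. Combining both inclusions gives the asserted equality.

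I do not expect any genuine obstacle here; the corollary is a routine consequence of the preceding lemma once one notes that $mU \in \X_+$ whenever $m \geq 0$ and $U \in \X_+$. The only point worth stating explicitly is that $U$ appears through the quantifier ``for all $U \in \M \cap \X_+$'', so the membership $U \in \X_+$ is part of the hypothesis and no further appeal to Assumption~\ref{ann:u_payoff} (which fixes one particular $U$ with $\pi(U)=1$) is needed — the statement holds for every positive eligible payoff.

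\begin{proof}
Let $X\in \A+\ker \pi$, $m\in \R_+$ and $U\in \M\cap \X_+$. Since $\X_+$ is a cone and $U\in \X_+$, we have $mU\in \X_+$, hence for $Y:=X+mU$ it holds that $Y-X=mU\in \X_+$. By Lemma~\ref{lem:monotonicity_of_A_ker_pi}, we conclude $X+mU=Y\in \A+\ker \pi$. This shows
\begin{align*}
\A+\ker \pi + \R_+U \subseteq \A+\ker \pi.
\end{align*}
Conversely, for any $X\in \A+\ker \pi$ we have $X=X+0\cdot U\in \A+\ker \pi + \R_+U$, so that
\begin{align*}
\A+\ker \pi \subseteq \A+\ker \pi + \R_+U.
\end{align*}
Combining both inclusions yields $\A+\ker \pi + \R_+U = \A+\ker \pi$.
\end{proof}
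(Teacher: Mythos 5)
Your proof is correct and follows essentially the same route as the paper: both reduce the first claim to Lemma~\ref{lem:monotonicity_of_A_ker_pi} via the observation that $(X+mU)-X = mU \in \X_+$, and both obtain the equality $\A+\ker\pi+\R_+U = \A+\ker\pi$ from this together with the trivial reverse inclusion (the paper cites \eqref{eq:A_plus_mU_in_A}, you take $m=0$ directly — the same content). Your explicit remark that only $U \in \X_+$ is needed, not the normalization $\pi(U)=1$ from Assumption~\ref{ann:u_payoff}, is accurate and consistent with how the corollary is stated.
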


\begin{proof}
Let $U\in \M\cap \X_+$ and $m\in \R_+$. Then, the assertions follow directly from Lemma \ref{lem:monotonicity_of_A_ker_pi} by $(X+mU)-X = mU\in \X_+$ and $\A+\R_+U = \A$ by \eqref{eq:A_plus_mU_in_A}.
\end{proof}

\begin{remark}\label{rem:A_plus_ker_pi_acceptance_set}
Since $0\in \A+\ker \pi$ and Corollary \ref{cor:monotonicity_of_A_ker_pi_for_mU}, $\A+\ker \pi$ is an acceptance set itself if it is proper.  
\end{remark}

%

In the following, it will be crucial to consider the recession cone of $\A$ or $\A+\ker \pi$, respectively. 

\begin{lemma}\label{lem:rec_A}
Consider \emph{(FM)}. Then, for every $V\in \X$, it holds:
\begin{align*}
V \in \rec (\A) \qquad \Longrightarrow \qquad V\in \rec(\A+\ker \pi).
\end{align*}
\end{lemma}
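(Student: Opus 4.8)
The plan is to unpack the definition of the recession cone directly. Recall that $\rec(\A + \ker\pi) = \{V \in \X \mid W + \lambda V \in \A + \ker\pi \text{ for all } W \in \A + \ker\pi, \ \lambda \in \R_+\}$. So I would start by fixing an arbitrary $V \in \rec(\A)$, an arbitrary $W \in \A + \ker\pi$, and an arbitrary $\lambda \in \R_+$, and then show $W + \lambda V \in \A + \ker\pi$.

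Since $W \in \A + \ker\pi$, I would write $W = X^0 + Z^0$ with $X^0 \in \A$ and $Z^0 \in \ker\pi$. Now $W + \lambda V = (X^0 + \lambda V) + Z^0$. Because $V \in \rec(\A)$ and $X^0 \in \A$, the definition of $\rec(\A)$ gives $X^0 + \lambda V \in \A$ (this is exactly the defining property, applied with the point $X^0$ and scalar $\lambda$). Hence $W + \lambda V$ is the sum of an element of $\A$ and the element $Z^0 \in \ker\pi$, so $W + \lambda V \in \A + \ker\pi$. Since $W$ and $\lambda$ were arbitrary, $V \in \rec(\A + \ker\pi)$, which is the claim.

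There is essentially no obstacle here — the statement is a routine set-theoretic manipulation that follows immediately from the definition of the recession cone together with the fact that $\ker\pi$ is unaffected by adding a recession direction of $\A$. The only point requiring a tiny bit of care is making sure the quantifiers in the definition of $\rec$ are handled correctly: one must verify $X^0 + \lambda V \in \A$ for \emph{all} $\lambda \in \R_+$ starting from \emph{every} point of $\A$, but this is precisely what $V \in \rec(\A)$ provides, so the decomposition $W + \lambda V = (X^0 + \lambda V) + Z^0$ closes the argument. (One could also note this is consistent with Corollary~\ref{cor:monotonicity_of_A_ker_pi_for_mU}, which handles the special case $V = U \in \M \cap \X_+$, but the general proof does not need it.)
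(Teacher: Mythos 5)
Your proof is correct and follows essentially the same route as the paper: decompose an arbitrary element of $\A+\ker\pi$ as $X^0+Z^0$ with $X^0\in\A$, $Z^0\in\ker\pi$, and apply $V\in\rec(\A)$ to the $\A$-component. (If anything, your write-up is slightly cleaner — the paper's proof begins ``Take $X\in\A$ arbitrary'' where it plainly means $X\in\A+\ker\pi$.)
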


\begin{proof}
Let $V\in \rec(\A)$. Take $X\in \A$ arbitrary. Then, $X=X^0+Z^0$ for some $X^0\in \A$ and $Z^0\in \ker \pi$. As a result,
\begin{align*}
X+\lambda V = (\underbrace{X^0+\lambda V}_{\in \A}) + Z^0 \in \A + \ker \pi
\end{align*}
for all $\lambda \in \R_+$ because of $V\in \rec(\A)$. Thus, $V\in \rec(\A+\ker \pi)$.
\end{proof}

As noted in Remark \ref{rem:acceptance_sets}, $U\in \M\cap\X_+$ fulfilling Assumption \ref{ann:u_payoff} satisfies $U\in \rec (\A)$. Some properties will depend on whether $-U$ belongs to the corresponding recession cone of $\A$ or $\A+\ker \pi$, respectively. We collect the previous results that are important for the proof of our main results in Theorem \ref{thm:rho_levelsets} in the following corollary: 

\begin{corollary}\label{cor:U_rec_A}
Consider \emph{(FM)}. Then, for every $U\in \M\cap \X_+$, it holds 
\begin{align*}
U\in \rec (\A) \quad \text{ and } \quad U\in \rec (\A+\ker \pi).
\end{align*}
Furthermore,
\begin{align*}
-U\in \rec (\A) \qquad \Longrightarrow \qquad -U\in \rec(\A+\ker \pi).
\end{align*}
\end{corollary}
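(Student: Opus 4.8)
The plan is to prove Corollary \ref{cor:U_rec_A} by simply assembling pieces that are already available in the excerpt, so the argument is short and modular. The statement has three parts: (a) $U\in\rec(\A)$ for every $U\in\M\cap\X_+$; (b) $U\in\rec(\A+\ker\pi)$ for every such $U$; and (c) $-U\in\rec(\A)\Rightarrow -U\in\rec(\A+\ker\pi)$.

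For part (a) I would invoke Remark \ref{rem:acceptance_sets}: by Definition \ref{def:acceptanceset}(iii) (monotonicity of $\A$) together with $U\in\X_+$, one has $\A+\R_+U\subseteq\A$, and in fact $\A+\R_+U=\A$ (equation \eqref{eq:A_plus_mU_in_A}); this says precisely that $X+\lambda U\in\A$ for all $X\in\A$ and all $\lambda\in\R_+$, which is the definition of $U\in\rec(\A)$. For part (b), I would use Corollary \ref{cor:monotonicity_of_A_ker_pi_for_mU}, which already states $\A+\ker\pi+\R_+U=\A+\ker\pi$, i.e. $X+mU\in\A+\ker\pi$ for every $X\in\A+\ker\pi$ and every $m\in\R_+$; this is exactly $U\in\rec(\A+\ker\pi)$. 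Alternatively, (b) follows from (a) combined with Lemma \ref{lem:rec_A} applied to $V=U$. For part (c), I would apply Lemma \ref{lem:rec_A} directly with $V=-U$: the hypothesis $-U\in\rec(\A)$ yields $-U\in\rec(\A+\ker\pi)$.

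There is essentially no obstacle here — the corollary is a bookkeeping statement that collects exactly the conclusions of Remark \ref{rem:acceptance_sets}, Corollary \ref{cor:monotonicity_of_A_ker_pi_for_mU}, and Lemma \ref{lem:rec_A} in a single place for convenient reference in the proof of Theorem \ref{thm:rho_levelsets}. The only thing to be careful about is citing the right prior result for each of the three claims and noting that (FM) guarantees $\A$ is an acceptance set (so Definition \ref{def:acceptanceset}(iii) applies) and that Assumption \ref{ann:u_payoff} is not even needed: the statement is ``for every $U\in\M\cap\X_+$'', and monotonicity of $\A$ plus $U\in\X_+$ already gives $U\in\rec(\A)$ regardless of the value of $\pi(U)$.
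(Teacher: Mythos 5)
Your proposal is correct and follows essentially the same route as the paper: the paper also obtains $U\in\rec(\A)$ directly from Definition \ref{def:acceptanceset}(iii) (the content of Remark \ref{rem:acceptance_sets}) and then derives the statements about $\A+\ker\pi$ from Lemma \ref{lem:rec_A}. Your additional observations — that Corollary \ref{cor:monotonicity_of_A_ker_pi_for_mU} gives an alternative for part (b) and that the normalization $\pi(U)=1$ is never used — are accurate but do not change the argument.
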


\begin{proof}
Let $U\in \M\cap \X_+$. Take $X\in \A$ arbitrary. Then, $X+\lambda U \in \A$ for all $\lambda \in \R_+$, since $\lambda U \in \X_+$ and $\A+\X_+ \subseteq \A$ by Definition \ref{def:acceptanceset}(iii). Consequently, $U\in \rec(\A)$.  The rest follows from Lemma \ref{lem:rec_A}.
\end{proof}

In Corollary \ref{cor:U_rec_A}, the converse direction does not hold in general, although $U$ and $-U$ do not belong to $\ker \pi$, see the following example.

\begin{example}\label{expl:minus_U_in_rec_A_plus_ker_pi_not_A}
Let $\X=\M=\R^2$, $\pi\colon \M\rightarrow \R$ with 
\begin{align*}
\pi(Z)=\pi(Z_1,Z_2)=\frac{1}{2}(Z_1+Z_2),
\end{align*}
i.e., $\ker \pi = \{Z\in \R^2 \mid Z_2=-Z_1\}$. Furthermore, let $U=(1,1)^T$ and $\A\subseteq \R^2$ be an acceptance set with 
\begin{align*}
\A=\{X\in\R^2\mid X_1\geq 0\}.  
\end{align*}
Then, $\A+\ker \pi = \R^2$ and, thus, $-U\in \rec(\A+\ker \pi)$. Obviously, we have $-U\notin \rec(\A)$, since $-U=0-U\notin \A$ although $0\in \A$.
\end{example}

Next, we give some information about the domain and level sets of $\rho_{\A,\M,\pi}$, which generalize results by  Baes et al. in \cite{Baes.2020}, Lemma 2.12 (compare also Farkas et al. \cite{Farkas.2015}) where $\A$ is supposed to be closed.

\begin{theorem} \label{thm:rho_levelsets}
Consider \emph{(FM)}. Let Assumption \ref{ann:u_payoff} be fulfilled by the payoff $U\in \M\cap \X_+$. Consider the functional $\rho_{\A,\M,\pi}\colon \X\rightarrow \overline{\R}$ introduced in \eqref{def:rho} and let $m\in \R$ be arbitrary. Then, the following conditions hold:
\begin{align*}
\mathrm{(i)}&\ \lev_{\rho_{\A,\M,\pi},<}(m)=\Int_{-U}(\A+\ker \pi)-mU=\A+\ker \pi+\R_>U-mU\\ 
&\hspace{2.65 cm}\subseteq \A+\ker \pi - mU,\\
\mathrm{(ii)}&\ \lev_{\rho_{\A,\M,\pi},\leq}(m)=\cl_{-U}(\A+\ker \pi)-mU \\
 & \hspace{2.65 cm} =\cl_{-U}(\A+\ker \pi)+\R_+U - mU,\\
\mathrm{(iii)}&\ \lev_{\rho_{\A,\M,\pi},=}(m)=\bd_{-U}(\A+\ker \pi)-mU.
\end{align*}
Furthermore,
\begin{align*}
\epi \rho_{\A,\M,\pi} = \{(X,m)\in \X \times \R \mid X \in \cl_{-U}(\A+\ker \pi)-mU\}.
\end{align*}
\end{theorem}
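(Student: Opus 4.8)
The plan is to leverage the Reduction Lemma (Lemma~\ref{lem:reductionlemma}), which identifies $\rho_{\A,\M,\pi}$ with the Gerstewitz-type functional $\varphi_{\A+\ker\pi,-U}$, i.e., $\rho_{\A,\M,\pi}(X)=\inf\{m\in\R\mid X+mU\in\A+\ker\pi\}$. Write $\B:=\A+\ker\pi$ and $K:=-U$; then $U\in\rec(\B)$ by Corollary~\ref{cor:U_rec_A}, which says exactly $K\in-\rec(\B)$, so Lemma~\ref{lem:k-directionally_properties_K_in_minusrec} applies to $\B$ with the direction $K=-U$. The strategy throughout is translation invariance (Lemma~\ref{lem:rho_riskmeasure}(ii)): since $\rho_{\A,\M,\pi}(X+mU)=\rho_{\A,\M,\pi}(X)-m$, it suffices to prove (i)--(iii) for $m=0$ and then shift by $-mU$; this collapses each claim to a statement about the level sets of $\varphi_{\B,-U}$ at level $0$, which are governed by the directional-closure machinery of Lemmas~\ref{lem:k-directionally_properties} and~\ref{lem:k-directionally_properties_K_in_minusrec}.

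For (i): I would show $\rho_{\A,\M,\pi}(X)<0$ iff there exists $m<0$ with $X+mU\in\B$, i.e., iff $X-\R_>U$ meets $\B$ at some point, equivalently (using $U\in\rec\B$, so that once $X+mU\in\B$ for some $m<0$ all larger shifts stay in $\B$) iff $X\in\B+\R_>U$. Lemma~\ref{lem:k-directionally_properties}(iv) identifies $\B-\R_>K=\B+\R_>U$ with $\Int_{K}(\B-\R_+K)=\Int_{-U}(\B)$ (the last equality since $\B-\R_+U=\B$ by Corollary~\ref{cor:monotonicity_of_A_ker_pi_for_mU}), giving the three displayed descriptions; the inclusion in $\B-mU$ follows from $\B+\R_>U\subseteq\B+\R_+U=\B$. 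For (ii): $\rho_{\A,\M,\pi}(X)\le 0$ iff $\inf\{m\mid X+mU\in\B\}\le 0$ iff for every $\lambda>0$ there is $t\in[0,\lambda)$ with $X-tU\in\B$ (here one uses $U\in\rec\B$ to push any witness with small positive $m$ down, and monotonicity of the infimum), which is precisely $X\in\cl_{-U}(\B)$ by Definition~\ref{d-direct_cl} / Lemma~\ref{lem:k-directionally_cl_sequences}. The second equality in (ii) is $\cl_{-U}(\B)+\R_+U=\cl_{-U}(\B)$, which holds because $\cl_{-U}(\B)-\R_+(-U)=\cl_{-U}(\B-\R_+(-U))$ by Lemma~\ref{lem:k-directionally_properties}(iii) and $\B-\R_+(-U)=\B+\R_+U=\B$. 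Then (iii) is immediate: $\lev_{=}(0)=\lev_{\le}(0)\setminus\lev_{<}(0)=\cl_{-U}(\B)\setminus\Int_{-U}(\B)=\bd_{-U}(\B)$ by the definition of the directional boundary, and shifting by $-mU$ finishes it.

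For the epigraph statement: $(X,m)\in\epi\rho_{\A,\M,\pi}$ means $\rho_{\A,\M,\pi}(X)\le m$, which by translation invariance is $\rho_{\A,\M,\pi}(X+mU)\le 0$, i.e., $X+mU\in\lev_{\le}(0)=\cl_{-U}(\B)$ by (ii), i.e., $X\in\cl_{-U}(\B)-mU$; this is exactly the asserted set.

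The main obstacle I anticipate is the careful handling of the passage from ``the infimum defining $\rho_{\A,\M,\pi}(X)$ equals/is less than $m$'' to ``$X$ lies in a directional closure/interior of $\B$,'' since the infimum need not be attained (Remark~\ref{bem:rho_finite_optz_empty}) and one must separate the strict ($<$) and non-strict ($\le$) cases cleanly. The key enabling facts that smooth this are $U\in\rec(\B)$ (Corollary~\ref{cor:U_rec_A}), which guarantees the set $\{m\mid X+mU\in\B\}$ is an up-set in $\R$ and hence its infimum behaves predictably, together with $\B+\R_+U=\B$ (Corollary~\ref{cor:monotonicity_of_A_ker_pi_for_mU}), which lets one replace $\B-\R_+(-U)$ by $\B$ wherever the cited directional-closure identities produce such a term.
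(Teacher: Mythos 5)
Your proposal is correct and takes essentially the same route as the paper: the Reduction Lemma combined with the directional-closure calculus of Lemmas \ref{lem:k-directionally_properties} and \ref{lem:k-directionally_properties_K_in_minusrec}, using $U\in\rec(\A+\ker\pi)$ and $\A+\ker\pi+\R_+U=\A+\ker\pi$; your reduction to $m=0$ by translation invariance and your up-set argument for part (ii) are only cosmetic variants of the paper's direct computation for general $m$ and its sequence argument via Lemma \ref{lem:k-directionally_cl_sequences}. One sign slip to fix: unwinding Definition \ref{d-direct_cl} with $K=-U$ yields the condition $X+tU\in\A+\ker\pi$ for arbitrarily small $t\in\R_+$, not $X-tU\in\A+\ker\pi$ as written in your step for (ii).
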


\begin{proof}  Let $m\in\R$ be arbitrary.
\begin{enumerate}
\item By Lemma \ref{lem:k-directionally_properties_K_in_minusrec} (ii) and $U\in \rec(\A)$ by Corollary \ref{cor:U_rec_A}, we have 
\begin{align*}
\Int_{-U}(\A+\ker \pi) = \A+\ker \pi + \R_> U,
\end{align*}
showing
\begin{align}\label{eq:lev_less_int}
\Int_{-U}(\A+\ker \pi)-mU = \A+\ker \pi + \R_> U-mU.
\end{align}
By the Reduction Lemma \ref{lem:reductionlemma}, we obtain
\begin{align*}
\lev_{\rho_{\A,\M,\pi},<}(m)&=\{X\in \X \mid \rho_{\A,\M,\pi}(X)<m\}\\ 
&=\{X\in \X \mid \exists t\in \R_>: X+(m-t)U\in \A+\ker \pi\}\\
&=\{X\in \X \mid \exists t\in \R_>: X\in \A+\ker \pi - mU + tU\} \\ 
&=\A+\ker \pi +\R_> U - mU\\
&\subseteq \A+\ker \pi-mU
\end{align*}
because $\A+\R_>U\subseteq \A$ by $U\in \rec(\A)$. By \eqref{eq:lev_less_int}, we have also
\begin{align*}
\lev_{\rho_{\A,\M,\pi},<}(m)=\Int_{-U}(\A+\ker \pi)-mU.
\end{align*}
\item First, we show the second equation: By Lemma \ref{lem:k-directionally_properties}(iii), we have
\begin{align*}
\cl_{-U}(\A+\ker \pi)+\R_+ U =  \cl_{-U}(\A+\ker \pi +\R_+ U).
\end{align*}
Since $\A+\ker \pi + \R_+U = \A+\ker \pi$ by Corollary \ref{cor:monotonicity_of_A_ker_pi_for_mU}, we obtain
\begin{align*}
\cl_{-U}(\A+\ker \pi)+\R_+ U = \cl_{-U}(\A+\ker \pi).
\end{align*}
Consequently, the second equation in (ii) holds, i.e., 
\begin{align}\label{eq:lev_leq_sec_equ}
\cl_{-U}(\A+\ker \pi)+\R_+ U-mU = \cl_{-U}(\A+\ker \pi)-mU.
\end{align}
It remains to show
\begin{align}\label{eq:lev_leq}
\lev_{\rho_{\A,\M,\pi},\leq}(m)=\cl_{-U}(\A+\ker \pi)-mU. 
\end{align}
We proof ($\subseteq$) in \eqref{eq:lev_leq}: Take $X\in \X$ with $\rho_{\A,\M,\pi}(X)=m$. By the Definition of $\rho_{\A,\M,\pi}$ as an infimum and Reduction Lemma \ref{lem:reductionlemma}, it exists $(m_n)\subseteq \R$ with $m_n\downarrow m$ for $n\rightarrow +\infty$ such that 
\begin{align*}
X\in \A+\ker \pi -m_n U=\A+\ker \pi - (m_n-m)U-mU, 
\end{align*}
i.e., 
\begin{align*}
X+mU-(m_n-m)(-U)\in \A+\ker \pi. 
\end{align*}
Since $(m_n-m)\downarrow 0$, we get $X+mU\in \cl_{-U}(\A+\ker \pi)$ by Lemma \ref{lem:k-directionally_cl_sequences}. Thus, we have shown
\begin{align}\label{eq:lev_equ_subseteq}
\lev_{\rho_{\A,\M,\pi},=}(m)\subseteq \cl_{-U}(\A+\ker \pi)-mU.
\end{align}
On the other hand, we obtain by (i)
\begin{align*}
\lev_{\rho_{\A,\M,\pi},<}(m) = \A+\ker \pi + \R_> U - mU 
\end{align*}
and, thus,
\begin{align}\label{eq:lev_less_subseteq}
\lev_{\rho_{\A,\M,\pi},<}(m) \subseteq \cl_{-U}(\A+\ker \pi)+\R_+ U - mU
\end{align}
because of Lemma \ref{lem:k-directionally_properties}(i). Consequently, \eqref{eq:lev_equ_subseteq} and \eqref{eq:lev_less_subseteq} imply together
\begin{align*}
\lev_{\rho_{\A,\M,\pi},\leq}(m)\subseteq \cl_{-U}(\A+\ker \pi)+\R_+ U - mU
\end{align*}
because $0\in \R_+U$ which leads by \eqref{eq:lev_leq_sec_equ} to
\begin{align*}
\lev_{\rho_{\A,\M,\pi},\leq}(m)\subseteq \cl_{-U}(\A+\ker \pi) - mU,
\end{align*}
showing ($\subseteq$) in \eqref{eq:lev_leq}.

Now, we proof ($\supseteq$) in \eqref{eq:lev_leq}: 
By Lemma \ref{lem:k-directionally_properties}(iv), we have
\begin{align*}
\cl_{-U}(\A+\ker \pi)+\R_>U&= \Int_{-U}(\A+\ker \pi +\R_+ U)
\end{align*}
and, thus, 
\begin{align*}
\cl_{-U}(\A+\ker \pi)+\R_>U&=\Int_{-U}(\A+\ker \pi)
\end{align*}
by \eqref{eq:A_plus_mU_in_A}. Therefore,
\begin{align*}
\cl_{-U}(\A+\ker \pi)+\R_>U-mU = \lev_{\rho_{\A,\M,\pi},<}(m) \subseteq  \lev_{\rho_{\A,\M,\pi},\leq}(m)
\end{align*}  
by (i). This shows ($\supseteq$) in \eqref{eq:lev_leq}.
\item The assertion follows by (i) and (ii) through
\begin{align*}
\lev_{\rho_{\A,\M,\pi},=}(m)&=\lev_{\rho_{\A,\M,\pi},\leq}(m)\backslash \lev_{\rho_{\A,\M,\pi},<}(m)\\ 
&= (\cl_{-U}(\A+\ker \pi)-mU)\backslash (\Int_{-U}(\A+\ker \pi)-mU)\\
&= (\cl_{-U}(\A+\ker \pi)\backslash \Int_{-U}(\A+\ker \pi))-mU\\
&=\bd_{-U}(\A+\ker \pi)-mU.
\end{align*}
\end{enumerate}
The description of $\epi \rho_{\A,\M,\pi}$ follows from Theorem \ref{thm:rho_levelsets}(ii).
\end{proof}
\newpage

\begin{corollary} \label{cor:rho_levelsets}
Consider \emph{(FM)}. Let Assumption \ref{ann:u_payoff} be fulfilled by the payoff $U\in \M\cap \X_+$. Consider the functional $\rho_{\A,\M,\pi}\colon \X \rightarrow \overline{\R}$ given by \eqref{def:rho} and let $m\in \R$ be arbitrary. Then,  the following holds:
\begin{enumerate}[font = \normalfont]
\item $\lev_{\rho_{\A,\M,\pi},\leq}(m) \supseteq \A+\ker \pi -mU$,
\item $\lev_{\rho_{\A,\M,\pi},\leq}(m)=\A+\ker \pi-mU$ holds if and only if $\A+\ker \pi$ is $(-U)$-directionally closed.
\end{enumerate}
\end{corollary}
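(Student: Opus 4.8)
The plan is to deduce both parts directly from Theorem~\ref{thm:rho_levelsets}(ii), which already identifies $\lev_{\rho_{\A,\M,\pi},\leq}(m)$ with $\cl_{-U}(\A+\ker\pi)-mU$, together with the inclusion properties of the $(-U)$-directional closure collected in Lemma~\ref{lem:k-directionally_properties}. For part (i), I would simply apply $\A+\ker\pi\subseteq\cl_{-U}(\A+\ker\pi)$ from Lemma~\ref{lem:k-directionally_properties}(i), and translate both sides by $-mU$; combined with Theorem~\ref{thm:rho_levelsets}(ii) this gives $\lev_{\rho_{\A,\M,\pi},\leq}(m)=\cl_{-U}(\A+\ker\pi)-mU\supseteq\A+\ker\pi-mU$. (Alternatively, part (i) is immediate from the inclusion $X\in\A \Rightarrow \rho_{\A,\M,\pi}(X)\le 0$ together with translation invariance, Lemma~\ref{lem:rho_riskmeasure}(ii), applied to a representative $X^0+Z^0-mU$ with $X^0\in\A$, $Z^0\in\ker\pi$.)

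For part (ii), the argument is an equivalence. Using Theorem~\ref{thm:rho_levelsets}(ii) once more, the equality $\lev_{\rho_{\A,\M,\pi},\leq}(m)=\A+\ker\pi-mU$ holds if and only if $\cl_{-U}(\A+\ker\pi)-mU=\A+\ker\pi-mU$. Since translation by the fixed vector $-mU$ is a bijection on $\X$, this is equivalent to $\cl_{-U}(\A+\ker\pi)=\A+\ker\pi$, which is precisely the definition of $\A+\ker\pi$ being $(-U)$-directionally closed (Definition~\ref{d-direct_cl} with $K=-U$). So the whole of part (ii) reduces to the observation that a set-translation is invertible, hence preserves and reflects equality of sets.

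I would write this up in a handful of lines: cite Theorem~\ref{thm:rho_levelsets}(ii) for the representation, Lemma~\ref{lem:k-directionally_properties}(i) for the inclusion in part (i), and Definition~\ref{d-direct_cl} for the terminology in part (ii), noting the cancellation of the common translate $-mU$. I do not anticipate a genuine obstacle here; the only point requiring a word of care is making explicit that $A\mapsto A-mU$ is a bijection of the power set of $\X$, so that $\cl_{-U}(\A+\ker\pi)-mU=\A+\ker\pi-mU$ is genuinely equivalent to (not merely implied by) $(-U)$-directional closedness of $\A+\ker\pi$. Everything else is a direct substitution into results already established in the excerpt.
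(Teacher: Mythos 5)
Your proposal is correct. Part (i) coincides with the paper's own argument verbatim: the inclusion $\A+\ker \pi \subseteq \cl_{-U}(\A+\ker \pi)$ from Lemma \ref{lem:k-directionally_properties}(i), translated by $-mU$ and combined with Theorem \ref{thm:rho_levelsets}(ii). For part (ii) the paper does not argue at all but merely cites an external proposition of Tammer and Weidner, whereas you supply the direct reasoning — the representation $\lev_{\rho_{\A,\M,\pi},\leq}(m)=\cl_{-U}(\A+\ker \pi)-mU$ together with the fact that translation by the fixed vector $-mU$ is a bijection of $\X$, so equality of the translates is equivalent to $\cl_{-U}(\A+\ker \pi)=\A+\ker \pi$, i.e., to $(-U)$-directional closedness by Definition \ref{d-direct_cl}; this is valid and has the small advantage of making the corollary self-contained.
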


\begin{proof} \mbox{}\vspace{\topskip}
 
\begin{enumerate}
\item Since 
\begin{align*}
\A+\ker \pi - mU \subseteq \cl_{-U}(\A+\ker \pi)-mU
\end{align*}
by Lemma \ref{lem:k-directionally_properties}(i), the assertion follows from Theorem \ref{thm:rho_levelsets}(ii).
\item follows by Tammer and Weidner \cite[Prop. 4.2.1(b)]{Tammer.2020}.
%
\end{enumerate}
\end{proof}

Under the assumption of $\rho_{\A,\M,\pi}$ being continuous and finite on $\X$, Baes et al. studied the sets $\lev_{\rho_{\A,\M,\pi},<}(m)$, $\lev_{\rho_{\A,\M,\pi},\leq}(m)$ and $\lev_{\rho_{\A,\M,\pi},=}(m)$ for the special case $m=0$ and $\A$ being a closed acceptance set in \cite[Lemma 2.12]{Baes.2020} (compare also Farkas et al. \cite{Farkas.2015}). They observed the following:

\begin{lemma}[see Baes et al. \cite{Baes.2020}, Lemma 2.12]\label{lem:level_sets_rho_baes}
Consider \emph{(FM)}. Let $\X$ be a locally convex Hausdorff topological vector space  over $\R$ fulfilling the first axiom of countability. Furthermore, let $\A$ be a closed acceptance set and Assumption \ref{ann:u_payoff} be fulfilled. Consider the functional $\rho_{\A,\M,\pi}\colon \X \rightarrow \overline{\R}$ given by \eqref{def:rho}. Suppose that $\rho_{\A,\M,\pi}$ is continuous and finite on $\X$. Then, the following conditions hold:
\begin{enumerate}[font = \normalfont]
\item $\lev_{\rho_{\A,\M,\pi},<}(0)=\Int(\A+\ker \pi)$,
\item $\lev_{\rho_{\A,\M,\pi},\leq}(0)=\cl(\A+\ker \pi)$,
\item $\lev_{\rho_{\A,\M,\pi},=}(0)=\bd(\A+\ker \pi)$.
\end{enumerate}
\end{lemma}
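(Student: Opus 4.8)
The plan is to derive this statement as a straightforward corollary of Theorem \ref{thm:rho_levelsets} together with Lemma \ref{lem:k-directionally_topol_properties}, which bridges the directional closure/interior/boundary with their topological counterparts. The key observation is that under the additional hypotheses of Lemma \ref{lem:level_sets_rho_baes} — $\X$ a locally convex Hausdorff t.v.s.\ satisfying the first axiom of countability, $\A$ closed, and $\rho_{\A,\M,\pi}$ continuous and finite — the directional objects appearing in Theorem \ref{thm:rho_levelsets} at level $m=0$ actually coincide with the topological ones. So the whole proof is: (1) invoke Theorem \ref{thm:rho_levelsets} at $m=0$ to get $\lev_{\rho_{\A,\M,\pi},<}(0)=\Int_{-U}(\A+\ker\pi)$, $\lev_{\rho_{\A,\M,\pi},\leq}(0)=\cl_{-U}(\A+\ker\pi)$, $\lev_{\rho_{\A,\M,\pi},=}(0)=\bd_{-U}(\A+\ker\pi)$; (2) argue that each of these equals its topological analogue.

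For step (2), I would proceed as follows. Since $\rho_{\A,\M,\pi}$ is continuous and finite on $\X$, the strict sublevel set $\lev_{\rho_{\A,\M,\pi},<}(0)=\rho_{\A,\M,\pi}^{-1}((-\infty,0))$ is open and the sublevel set $\lev_{\rho_{\A,\M,\pi},\leq}(0)=\rho_{\A,\M,\pi}^{-1}((-\infty,0])$ is closed; moreover the level line $\lev_{\rho_{\A,\M,\pi},=}(0)=\rho_{\A,\M,\pi}^{-1}(\{0\})$ is closed. Now combine with Lemma \ref{lem:k-directionally_topol_properties}: we always have $\cl_{-U}(\A+\ker\pi)\subseteq\cl(\A+\ker\pi)$ and $\Int(\A+\ker\pi)\subseteq\Int_{-U}(\A+\ker\pi)$. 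For the closure: $\A+\ker\pi\subseteq\cl_{-U}(\A+\ker\pi)=\lev_{\rho_{\A,\M,\pi},\leq}(0)$ is closed (by continuity), hence $\cl(\A+\ker\pi)\subseteq\lev_{\rho_{\A,\M,\pi},\leq}(0)=\cl_{-U}(\A+\ker\pi)\subseteq\cl(\A+\ker\pi)$, giving equality and proving (ii). For the interior: $\Int_{-U}(\A+\ker\pi)=\lev_{\rho_{\A,\M,\pi},<}(0)$ is open and contained in $\A+\ker\pi$ (by Theorem \ref{thm:rho_levelsets}(i), since the strict sublevel set at $0$ lies in $\A+\ker\pi$), hence $\Int_{-U}(\A+\ker\pi)\subseteq\Int(\A+\ker\pi)\subseteq\Int_{-U}(\A+\ker\pi)$, giving equality and proving (i). Finally (iii) follows by taking the set difference, exactly as in the proof of Theorem \ref{thm:rho_levelsets}(iii): $\bd(\A+\ker\pi)=\cl(\A+\ker\pi)\setminus\Int(\A+\ker\pi)=\cl_{-U}(\A+\ker\pi)\setminus\Int_{-U}(\A+\ker\pi)=\bd_{-U}(\A+\ker\pi)=\lev_{\rho_{\A,\M,\pi},=}(0)$.

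The main subtlety — not really an obstacle but the point that needs care — is justifying that $\lev_{\rho_{\A,\M,\pi},<}(0)$ is actually contained in $\A+\ker\pi$, so that its topological interior coincides with the directional one; this is precisely the inclusion $\lev_{\rho_{\A,\M,\pi},<}(m)\subseteq\A+\ker\pi-mU$ recorded in Theorem \ref{thm:rho_levelsets}(i), specialized to $m=0$. A second point worth a sentence is that finiteness and continuity of $\rho_{\A,\M,\pi}$ are exactly what make the level sets at $0$ open/closed, so that the topological and directional notions are forced to agree; without them one only has the directional identities of Theorem \ref{thm:rho_levelsets}. One should also note that the hypotheses that $\X$ is locally convex Hausdorff with the first axiom of countability and that $\A$ is closed are not strictly needed for this derivation — they come from \cite{Baes.2020} — but since the statement is quoted verbatim from that reference, I would simply carry them along and remark (as the surrounding text already does) that Theorem \ref{thm:rho_levelsets} is the more general result.
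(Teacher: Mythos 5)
Your proof is correct, and it follows the route the paper itself intends: the paper does not reprove Baes et al.'s lemma directly, but presents Theorem \ref{thm:rho_levelsets} (at $m=0$) together with Theorem \ref{thm:directional_bd_int_cl_coincides_with_non_directional} as the way to recover it, which is exactly your decomposition. The one genuine difference is in how you establish the coincidence of the directional and topological notions under continuity. The paper's Theorem \ref{thm:directional_bd_int_cl_coincides_with_non_directional}(a) is deliberately formulated for normed vector spaces ``in order to use sequences'': for the interior it runs a contradiction argument with balls $\B_{1/n}(X)$ and convergent sequences, and for the closure it approximates $X\in\cl(\A+\ker\pi)$ by a sequence in $\A+\ker\pi$ and passes to the limit in $\rho_{\A,\M,\pi}$. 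You instead observe that, since $\rho_{\A,\M,\pi}$ is continuous and finite, $\lev_{\rho_{\A,\M,\pi},<}(0)$ is open and $\lev_{\rho_{\A,\M,\pi},\leq}(0)$ is closed as preimages, and then squeeze using Lemma \ref{lem:k-directionally_topol_properties} together with the inclusions $\Int_{-U}(\A+\ker\pi)\subseteq\A+\ker\pi\subseteq\cl_{-U}(\A+\ker\pi)$ from Theorem \ref{thm:rho_levelsets}(i),(ii). This is both more elementary and more general: it needs neither a norm nor the first axiom of countability, so it delivers the directional-versus-topological identification in an arbitrary topological vector space, whereas the paper's bridging theorem is stated only for normed spaces. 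Your identification of the two points needing care (that $\lev_{\rho_{\A,\M,\pi},<}(0)\subseteq\A+\ker\pi$, and that finiteness plus continuity is what forces the openness/closedness of the level sets) is exactly right.
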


\bigskip
These properties look similar to our results for $m=0$, but we derived $\Int_{-U}(\A+\ker \pi)$, $\cl_{-U}(\A+\ker \pi)$ and $\bd_{-U}(\A+\ker \pi)$ in Theorem \ref{thm:rho_levelsets}. This can be united with our results by the following theorem that is formulated for normed vector spaces in order to use sequences:

\begin{theorem}\label{thm:directional_bd_int_cl_coincides_with_non_directional}
Consider \emph{(FM)}. Let $(\X,\norm{\cdot})$ be a normed vector space over $\R$ and Assumption \ref{ann:u_payoff} be fulfilled by the payoff $U\in \M\cap \X_+$. Consider the functional $\rho_{\A,\M,\pi}\colon \X \rightarrow \overline{\R}$ given by \eqref{def:rho}. Suppose that one of the following conditions is satisfied:
\begin{enumerate}[label = (\alph*) , font = \normalfont]
\item $\rho_{\A,\M,\pi}$ is continuous on $\X$,
\item $\A+\ker \pi$ fulfills
\begin{align}\label{eq:directional_prop_Int_A}
\A+\ker \pi+\R_>U\subseteq \Int(\A+\ker \pi)
\end{align}
and
\begin{align}\label{eq:directional_prop_cl_A}
\cl(\A+\ker \pi)+\R_>U \subseteq \A+\ker \pi.
\end{align}
\end{enumerate}
Then, the following conditions hold:
\begin{enumerate}[font = \normalfont]
\item $\Int_{-U}(\A+\ker \pi) = \Int (\A + \ker \pi),$ 
\item $\cl_{-U}(\A+\ker \pi) = \cl (\A + \ker \pi), $
\item $\bd_{-U}(\A+\ker \pi) = \bd (\A+\ker \pi)$.
\end{enumerate}
\end{theorem}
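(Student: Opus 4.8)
Throughout, abbreviate $\B:=\A+\ker\pi$. By Corollary~\ref{cor:U_rec_A} one has $U\in\rec(\B)$, that is, $-U \in -\rec(\B)$, so Lemma~\ref{lem:k-directionally_properties_K_in_minusrec} applies with $K=-U$; moreover Assumption~\ref{ann:u_payoff} is in force, so Theorem~\ref{thm:rho_levelsets} is available. I would begin by recording that, regardless of which of (a), (b) holds, the inclusions
\begin{align*}
\Int(\B)\subseteq\Int_{-U}(\B)\subseteq\B,\qquad \B\subseteq\cl_{-U}(\B)\subseteq\cl(\B),\qquad \bd_{-U}(\B)\subseteq\bd(\B)
\end{align*}
are already known, by Lemma~\ref{lem:k-directionally_topol_properties} together with Lemma~\ref{lem:k-directionally_properties}(i). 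Hence in each of (i)--(iii) only one inclusion remains, and once (i) and (ii) are proved, (iii) follows at once from $\bd_{-U}(\B)=\cl_{-U}(\B)\backslash\Int_{-U}(\B)$ and $\bd(\B)=\cl(\B)\backslash\Int(\B)$. So everything reduces to establishing $\Int_{-U}(\B)\subseteq\Int(\B)$ and $\cl(\B)\subseteq\cl_{-U}(\B)$, under (a) and under (b) respectively.

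Under hypothesis (b) the argument is purely set-theoretic. By Lemma~\ref{lem:k-directionally_properties_K_in_minusrec}(ii) with $K=-U$ one has $\Int_{-U}(\B)=\B+\R_>U$, so \eqref{eq:directional_prop_Int_A} is exactly the inclusion $\Int_{-U}(\B)\subseteq\Int(\B)$, giving (i). By Lemma~\ref{lem:k-directionally_properties_K_in_minusrec}(i) with $K=-U$ one has $\cl_{-U}(\B)=\{X\in\X\mid X+\R_>U\subseteq\B\}$; for any $X\in\cl(\B)$, condition \eqref{eq:directional_prop_cl_A} yields $X+\R_>U\subseteq\cl(\B)+\R_>U\subseteq\B$, hence $X\in\cl_{-U}(\B)$, so $\cl(\B)\subseteq\cl_{-U}(\B)$, giving (ii).

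Under hypothesis (a) I would pass through the level-set descriptions of Theorem~\ref{thm:rho_levelsets} at the level $m=0$, which give $\lev_{\rho_{\A,\M,\pi},<}(0)=\Int_{-U}(\B)$ and $\lev_{\rho_{\A,\M,\pi},\leq}(0)=\cl_{-U}(\B)$. Since $[-\infty,0)$ is open and $[-\infty,0]$ is closed in $\overline{\R}$, continuity of $\rho_{\A,\M,\pi}$ on $\X$ forces the strict sublevel set $\Int_{-U}(\B)$ to be open and the sublevel set $\cl_{-U}(\B)$ to be closed. But then $\Int_{-U}(\B)$ is an open subset of $\B$, hence is contained in the largest open subset $\Int(\B)$ of $\B$, which is (i); and $\cl_{-U}(\B)$ is a closed set containing $\B$, hence contains the smallest closed superset $\cl(\B)$ of $\B$, which is (ii); and (iii) follows as above.

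I expect the only delicate point to be in case (a): one must be sure that continuity of a functional valued in $\overline{\R}$ indeed turns its strict sublevel sets into open sets, and its sublevel sets into closed sets --- this is the order-topology remark used above --- and that Theorem~\ref{thm:rho_levelsets} genuinely identifies those level sets with $\Int_{-U}(\B)$ and $\cl_{-U}(\B)$. Everything else is bookkeeping of inclusions; no real estimate appears. In essence the theorem is just the dictionary translating the analytic hypothesis (a) and the geometric hypotheses \eqref{eq:directional_prop_Int_A}--\eqref{eq:directional_prop_cl_A} into one another through the level-set calculus already established, the normed-vector-space setting serving only to give ``continuous on $\X$'' its meaning (and, in the surrounding results, to allow sequential arguments).
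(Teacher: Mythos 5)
Your proposal is correct, and case (b) coincides with the paper's own argument (the same two applications of Lemma~\ref{lem:k-directionally_properties_K_in_minusrec}). In case (a), however, you take a genuinely cleaner route. The paper proves $\Int_{-U}(\A+\ker\pi)\subseteq\Int(\A+\ker\pi)$ by contradiction: it picks points $X_n\in\B_{1/n}(X)$ outside $\A+\ker\pi$, uses sequential continuity to force $\rho_{\A,\M,\pi}(X_n)<0$ for large $n$, and derives a contradiction via Theorem~\ref{thm:rho_levelsets}(i); similarly it proves $\cl(\A+\ker\pi)\subseteq\cl_{-U}(\A+\ker\pi)$ by passing a sequence from $\A+\ker\pi$ to the limit. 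You instead observe that Theorem~\ref{thm:rho_levelsets} at $m=0$ identifies $\Int_{-U}(\A+\ker\pi)$ and $\cl_{-U}(\A+\ker\pi)$ as the preimages $\rho_{\A,\M,\pi}^{-1}([-\infty,0))$ and $\rho_{\A,\M,\pi}^{-1}([-\infty,0])$, which continuity makes open and closed respectively, and then conclude by maximality of the interior and minimality of the closure, using the sandwich inclusions from Lemmas~\ref{lem:k-directionally_properties}(i) and \ref{lem:k-directionally_topol_properties}. This avoids the ball-and-sequence machinery entirely and, as you note, shows that the normed structure is inessential for the theorem itself: your argument goes through in any topological vector space, provided continuity of the extended-real-valued $\rho_{\A,\M,\pi}$ is read with respect to the order topology on $\overline{\R}$ (which is also what makes the paper's sequential argument work in the metrizable setting). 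The only cosmetic caveat is your abbreviation of $\A+\ker\pi$ by a symbol the paper already reserves for balls and for the space of bounded random variables; choose a fresh letter if you write this up.
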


\begin{proof}
Suppose that (a) is fulfilled. 
\begin{enumerate}
\item The relation ($\supseteq$) follows by Lemma \ref{lem:k-directionally_topol_properties}(ii). Thus, we need to show ($\subseteq$): Consider $X\in \Int_{-U}(\A+\ker \pi)$. Then, 
\begin{align*}
(X-\R_> U)\cap(\A+\ker \pi)\neq \emptyset
\end{align*}
and, thus, $\rho_{\A,\M,\pi}(X)<0$. Suppose, for every $n\in \N$ exists $X_n\in \X$ with 
\begin{align*}
X_n\in \B_\frac{1}{n}(X) \text{ and }X_n\notin \A+\ker \pi. 
\end{align*}
Then, $(X_n)_{n\in \N}\subseteq \X$ is a sequence with $X_n\rightarrow X$ for $n\rightarrow +\infty$ $\mathbb{P}$-a.s. and 
\begin{align*}
\rho_{\A,\M,\pi}(X_n)\rightarrow \rho_{\A,\M,\pi}(X)<0 \text{ for } n\rightarrow +\infty
\end{align*}
because of the continuity of $\rho_{\A,\M,\pi}$. By convergence, for every $\delta > 0$ there is some $N(\delta)\in \N$ with 
\begin{align*}
\abs{\rho_{\A,\M,\pi}(X_k)-\rho_{\A,\M,\pi}(X)}<\delta \text{ for all }k>N(\delta). 
\end{align*}
By choice of $\delta=\abs{\frac{\rho_{\A,\M,\pi}(X)}{2}}$, we get $\rho_{\A,\M,\pi}(X_k)<0$ and, thus, 
\begin{align*}
\qquad X_k\in \Int_{-U}(\A+\ker \pi)\subseteq \A+\ker \pi \text{ for }k>N\left(\delta\right)
\end{align*}
by Theorem \ref{thm:rho_levelsets}(i) and Lemma \ref{lem:k-directionally_topol_properties}(ii).  Consequently, such an sequence $(X_n)_{n\in\N}$ cannot exist and there is some $n_0\in \N$ with $\B_\frac{1}{n_0}(X)\subseteq \A+\ker \pi$. Thus, we obtain $X\in \Int(\A+\ker \pi)$, which shows $(\subseteq)$ and, therefore, (i).    
\item The relation ($\subseteq$) follows by Lemma \ref{lem:k-directionally_topol_properties}(i). We just need to show ($\supseteq$) in (ii): Consider now $X\in \cl(\A+\ker \pi)$. Then, 
\begin{align*}
\exists (X_n)_{n\in\N}\subseteq \A+\ker \pi: \quad X_n\rightarrow X \text{ for } n\rightarrow +\infty\ \mathbb{P}-a.s. 
\end{align*}
Thus, $\rho_{\A,\M,\pi}(X_n)\leq 0$ by the Reduction Lemma \ref{lem:reductionlemma}, which implies $\rho_{\A,\M,\pi}(X)\leq 0$ by continuity of $\rho_{\A,\M,\pi}(X)$. Consequently, we have $X\in \cl_{-U}(\A+\ker \pi)$ by Theorem \ref{thm:rho_levelsets}(ii). 
\item The assertion follows with (i) and (ii) by 
\begin{align*}
\bd_{-U}(\A+\ker \pi)&=\cl_{-U}(\A+\ker \pi)\backslash \Int_{-U}(\A+\ker \pi)\\ 
&=\cl(\A+\ker \pi)\backslash \Int(\A+\ker \pi)\\ 
&=\bd(\A+\ker \pi).
\end{align*}
\end{enumerate}
Suppose now that (b) holds. 
\begin{enumerate}
\item As in (a), we have to show ($\subseteq$) in (i): Let $X\in \Int_{-U}(\A+\ker \pi)$. Then, it exists $m\in \R_>$ with $X-mU\in \A+\ker \pi$. Thus, 
\begin{align*}
X\in \A+\ker \pi +\R_> U\subseteq \Int(\A+\ker \pi)
\end{align*}
by \eqref{eq:directional_prop_Int_A}, showing (i). 
\item Again, ($\subseteq$) follows by Lemma \ref{lem:k-directionally_topol_properties}(i). The relation ($\supseteq$) is a direct consequence of \eqref{eq:directional_prop_cl_A} by Lemma \ref{lem:k-directionally_properties_K_in_minusrec}(i).
\item The assertion follows like in (a) by (i) and (ii). 
\end{enumerate}
\end{proof}

\begin{remark}
In more detail, since $U\in \rec(\A+\ker \pi)$ for $U\in \M\cap \X_+$ (see Corollary \ref{cor:U_rec_A}), it can be shown that the following holds: 
\begin{align*}
&\eqref{eq:directional_prop_Int_A}\quad  \Longleftrightarrow\quad \Int_{-U}(\A+\ker \pi)=\Int(\A+\ker \pi),\\ 
&\eqref{eq:directional_prop_cl_A}\quad \Longleftrightarrow\quad \cl_{-U}(\A+\ker \pi)=\cl(\A+\ker \pi),
\end{align*}
see also \cite{Tammer.2020}, Prop. 2.3.54 and Prop. 2.3.55. The subspace $\ker \pi$ is closed even if  $\dim \X = +\infty$, since $\pi$ is linear and continuous, but, even if $\A$ is closed and we have a sum of two closed sets, the augmented set $\A+\ker \pi$ does not have to be closed or ($-U$)-directionally closed.
\end{remark}

%

Consider $\rho_{\A,\M,\pi}$ introduced in \eqref{def:rho}. Remember that the set 
\begin{align*}
\A_{\rho_{\A,\M,\pi}}:=\lev_{\rho_{\A,\M,\pi},\leq}(0)=\{X\in \X\mid \rho_{\A,\M,\pi}(X)\leq 0\}
\end{align*}
in Lemma \ref{lem:level_sets_rho_baes} (ii) is an acceptance set itself by Example \ref{expl:VaR_CVaR_acc_sets} because $\rho_{\A,\M,\pi}$ is a monetary risk measure with $\rho_{\A,\M,\pi}(0)\leq 0$. Indeed, it holds
\begin{align*}
\rho_{\A,\M,\pi} \text{ normalized } \quad \Longrightarrow \quad \rho_{\A,\M,\pi}(0)=0,
\end{align*}
which is given if $\A$ fulfills \eqref{eq:span_U_no_good_deals} and Assumption \ref{ann:absence_good_deals} holds (see Remark \ref{rem:rho_normalized}). Moreover, ${U\in \M\cap \X_+}$ from Assumption \ref{ann:u_payoff} fulfills $U\in \A_{\rho_{\A,\M,\pi}}$ and $U\in \rec (\A_{\rho_{\A,\M,\pi}})$, as well, see Remark \ref{rem:acceptance_sets}. 

\bigskip 
The relationship between the acceptance set $\A_{\rho_{\A,\M,\pi}}$ and the translation invariant functional $\rho_{\A,\M,\pi}$ introduced in \eqref{def:rho} can be described as followed:

\begin{theorem}
Consider \emph{(FM)}. Let Assumption \ref{ann:u_payoff} be fulfilled by $U\in \M\cap \X_+$. Consider the functional $\rho_{\A,\M,\pi}\colon \X \rightarrow \overline{\R}$ given by \eqref{def:rho}. Furthermore, let $\A_{\rho_{\A,\M,\pi}}:=\lev_{\rho_{\A,\M,\pi},\leq}(0)$.  Then, $\A_{\rho_{\A,\M,\pi}}$ fulfills
\begin{align}\label{eq:unique_level_set}
\lev_{\rho_{\A,\M,\pi},\leq}(m)=\A_{\rho_{\A,\M,\pi}} - mU \text{ for all }m\in \R.
\end{align}
Moreover, $\A_{\rho_{\A,\M,\pi}}$ has the following properties:
\begin{enumerate}[font = \normalfont]
\item $\A_{\rho_{\A,\M,\pi}}$ is ($-U$)-directionally closed,
\item $\rho_{\A,\M,\pi} \equiv \rho_{\A_{\rho_{\A,\M,\pi}},\M,\pi}$ on $\X$.
\end{enumerate}
\end{theorem}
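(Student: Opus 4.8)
The statement bundles together three claims about $\A_{\rho_{\A,\M,\pi}}:=\lev_{\rho_{\A,\M,\pi},\leq}(0)$: the translation formula \eqref{eq:unique_level_set}, the $(-U)$-directional closedness, and the fixed-point identity $\rho_{\A,\M,\pi}\equiv\rho_{\A_{\rho_{\A,\M,\pi}},\M,\pi}$. All three should follow from Theorem~\ref{thm:rho_levelsets} together with the translation-invariance property of $\rho_{\A,\M,\pi}$ (Lemma~\ref{lem:rho_riskmeasure}(ii)) and the basic properties of the directional closure in Lemma~\ref{lem:k-directionally_properties}. First I would establish \eqref{eq:unique_level_set}: the fastest route is cash invariance. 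Since $\rho_{\A,\M,\pi}(X+mU)=\rho_{\A,\M,\pi}(X)-m$, one has $\rho_{\A,\M,\pi}(X)\le m \iff \rho_{\A,\M,\pi}(X+mU)\le 0 \iff X+mU\in\A_{\rho_{\A,\M,\pi}} \iff X\in\A_{\rho_{\A,\M,\pi}}-mU$, which is exactly \eqref{eq:unique_level_set}. (Alternatively it drops straight out of Theorem~\ref{thm:rho_levelsets}(ii): $\lev_{\rho_{\A,\M,\pi},\le}(m)=\cl_{-U}(\A+\ker\pi)-mU=(\cl_{-U}(\A+\ker\pi)-0\cdot U)-mU=\A_{\rho_{\A,\M,\pi}}-mU$.)

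\textbf{Directional closedness (i).} By Theorem~\ref{thm:rho_levelsets}(ii) with $m=0$ we have $\A_{\rho_{\A,\M,\pi}}=\cl_{-U}(\A+\ker\pi)$. Now apply the idempotence of the directional closure, Lemma~\ref{lem:k-directionally_properties}(ii), which gives $\cl_{-U}\bigl(\cl_{-U}(\A+\ker\pi)\bigr)=\cl_{-U}(\A+\ker\pi)$; that is precisely $\cl_{-U}(\A_{\rho_{\A,\M,\pi}})=\A_{\rho_{\A,\M,\pi}}$, i.e.\ $\A_{\rho_{\A,\M,\pi}}$ is $(-U)$-directionally closed.

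\textbf{The fixed-point identity (ii).} Write $\B:=\A_{\rho_{\A,\M,\pi}}$. I want $\rho_{\B,\M,\pi}(X)=\rho_{\A,\M,\pi}(X)$ for all $X\in\X$. The cleanest argument uses the Reduction Lemma~\ref{lem:reductionlemma} applied to $\B$ together with \eqref{eq:unique_level_set}: since $\B$ is an acceptance set with $U\in\rec\B$ (noted just before the theorem), the Reduction Lemma gives $\rho_{\B,\M,\pi}(X)=\inf\{m\in\R\mid X+mU\in\B+\ker\pi\}$. One then needs $\B+\ker\pi=\B$; this holds because $\ker\pi\subseteq\rec(\cl_{-U}(\A+\ker\pi))$ — indeed $\A+\ker\pi+\ker\pi=\A+\ker\pi$, and passing to the $(-U)$-directional closure preserves this via Lemma~\ref{lem:k-directionally_properties}(iii) (or one argues directly that $\ker\pi+\cl_{-U}(\A+\ker\pi)\subseteq\cl_{-U}(\A+\ker\pi+\ker\pi)=\cl_{-U}(\A+\ker\pi)$). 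Hence $\rho_{\B,\M,\pi}(X)=\inf\{m\mid X+mU\in\B\}=\inf\{m\mid \rho_{\A,\M,\pi}(X+mU)\le 0\}=\inf\{m\mid \rho_{\A,\M,\pi}(X)-m\le 0\}=\rho_{\A,\M,\pi}(X)$, using cash invariance in the last step. Alternatively, one can avoid the Reduction Lemma entirely and compute $\rho_{\B,\M,\pi}(X)=\inf\{\pi(Z)\mid Z\in\M,\ X+Z\in\B\}$: using \eqref{eq:unique_level_set} in the form $X+Z\in\B=\lev_{\le}(0)\iff \rho_{\A,\M,\pi}(X+Z)\le 0\iff \pi(Z)\ge\rho_{\A,\M,\pi}(X)$ (the last by Lemma~\ref{lem:rho_riskmeasure}(ii)), the infimum of $\pi(Z)$ over such $Z$ is $\ge\rho_{\A,\M,\pi}(X)$, and it is $\le\rho_{\A,\M,\pi}(X)$ because one may take $Z$ with $\pi(Z)$ arbitrarily close to (or equal to, if attained) $\rho_{\A,\M,\pi}(X)$ and then $\rho_{\A,\M,\pi}(X+Z)=\rho_{\A,\M,\pi}(X)-\pi(Z)\le 0$ so $X+Z\in\B$.

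\textbf{Main obstacle.} The only non-formal point is the identity $\B+\ker\pi=\B$ needed in the first proof of (ii): it requires knowing that the $(-U)$-directional closure of $\A+\ker\pi$ still absorbs $\ker\pi$. This is where I expect to have to be slightly careful — but it is exactly the content of Lemma~\ref{lem:k-directionally_properties}(iii), since $\ker\pi$ is a subspace and $\A+\ker\pi+\ker\pi=\A+\ker\pi$. The second proof of (ii) sidesteps this issue altogether, at the cost of a short $\inf$-chasing argument, so if the $\ker\pi$-absorption turns out awkward to cite cleanly, I would present the argument via \eqref{eq:unique_level_set} and Lemma~\ref{lem:rho_riskmeasure}(ii) directly. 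Everything else is a mechanical application of results already in the excerpt.
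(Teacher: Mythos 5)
Your proposal is correct, and its backbone coincides with the paper's proof: \eqref{eq:unique_level_set} is read off from Theorem~\ref{thm:rho_levelsets}(ii) (your cash-invariance derivation via Lemma~\ref{lem:rho_riskmeasure}(ii) is an equivalent shortcut the paper does not use), (i) is the idempotence of $\cl_{-U}$ applied to $\A_{\rho_{\A,\M,\pi}}=\cl_{-U}(\A+\ker\pi)$, and the paper's proof of (ii) is exactly your first route: establish $\A_{\rho_{\A,\M,\pi}}+\ker\pi=\A_{\rho_{\A,\M,\pi}}$ and then chain $\inf\{m\mid X\in\A_{\rho_{\A,\M,\pi}}-mU\}$ to the Reduction Lemma. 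The one place where you diverge in substance is the justification of that absorption identity. The paper proves it by an explicit element chase: for $X\in\A_{\rho_{\A,\M,\pi}}+\ker\pi$ it exhibits $X+\tfrac1n U\in\A+\ker\pi$ for all $n$ and invokes Lemma~\ref{lem:k-directionally_cl_sequences}. You instead cite Lemma~\ref{lem:k-directionally_properties}(iii); note that this lemma only concerns $\cl_K(\A)-\R_+K$ and does not literally cover adding the subspace $\ker\pi$, so as a citation it does not carry the step --- but the direct inclusion you sketch in parentheses, $D+\cl_K(C)\subseteq\cl_K(C+D)$ applied with $D=\ker\pi$ and $C=\A+\ker\pi$, is a one-line verification and is essentially the paper's element chase in disguise, so nothing is missing. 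Your second, alternative proof of (ii) is genuinely different from the paper: by computing $\rho_{\A_{\rho_{\A,\M,\pi}},\M,\pi}$ straight from the definition \eqref{def:rho} and using $X+Z\in\lev_{\rho_{\A,\M,\pi},\leq}(0)\iff\pi(Z)\geq\rho_{\A,\M,\pi}(X)$ together with surjectivity of $\pi$, it bypasses both the Reduction Lemma and the absorption identity entirely; this is shorter and more self-contained, at the price of not recording the structurally useful fact $\A_{\rho_{\A,\M,\pi}}+\ker\pi=\A_{\rho_{\A,\M,\pi}}$, which the paper's route produces as a by-product.
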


\begin{proof}
From Theorem \ref{thm:rho_levelsets}(ii), we obtain for $m=0$ 
\begin{align*}
\A_{\rho_{\A,\M,\pi}} = \cl_{-U}(\A+\ker \pi)+\R_+ U. 
\end{align*}
This yields
\begin{align*}
\A_{\rho_{\A,\M,\pi}}-mU = \cl_{-U}(\A+\ker \pi)+\R_+ U-mU = \lev_{\rho_{\A,\M,\pi},\leq}(m)
\end{align*}
for all $m\in \R$ , showing \eqref{eq:unique_level_set}.
As a result, we obtain by Theorem \ref{thm:rho_levelsets}(ii) for $m=0$
\begin{align*}
\A_{\rho_{\A,\M,\pi}} = \lev_{\rho_{\A,\M,\pi},\leq}(0)=\cl_{-U}(\A + \ker \pi)
\end{align*} 
which is obviously a  $(-U)$-directionally closed set, i.e., (i) holds.

In order to proof (ii), we show
\begin{align}\label{eq:A_rho_plus_ker_pi_equals}
\A_{\rho_{\A,\M,\pi}} + \ker \pi &= \A_{\rho_{\A,\M,\pi}}.
\end{align}
Since $0\in \ker \pi$, ($\supseteq$) in \eqref{eq:A_rho_plus_ker_pi_equals} is clear. For ($\subseteq$), consider $X\in \A_{\rho_{\A,\M,\pi}} + \ker \pi$ arbitrary. Then, there is some $Z^0\in \ker \pi$ with
\begin{align*}
X^0:=X+Z^0\in \A_{\rho_{\A,\M,\pi}}= \cl_{-U}(\A+\ker \pi).
\end{align*}
Consequently,
\begin{align*}
\forall \lambda \in \R_>:\quad X^0+\lambda U \in \A+\ker \pi.
\end{align*}
Since $-Z^0\in \ker \pi$, we have $X^0+\lambda U - Z^0\in \A +\ker \pi $ for every $\lambda \in \R_>$. Hence, we obtain
\begin{align*}
\forall n\in \N:\quad X^0+\frac{1}{n}U-Z^0 = X+\frac{1}{n}U\in \A+\ker \pi
\end{align*}
and, thus,
$X\in \cl_{-U}(\A+\ker \pi)$ by Lemma \ref{lem:k-directionally_cl_sequences}, showing ($\subseteq$) in \eqref{eq:A_rho_plus_ker_pi_equals}. That completes the proof of \eqref{eq:A_rho_plus_ker_pi_equals}.

Formula \eqref{eq:unique_level_set} and \eqref{eq:A_rho_plus_ker_pi_equals} imply
\begin{align*}
\rho_{\A,\M,\pi}(X)&=\inf\{m\in \R \mid X \in \lev_{\rho_{\A,\M,\pi},\leq}(m)\}\\ 
&=\inf\{m\in \R \mid X \in \A_{\rho_{\A,\M,\pi}}-mU\}\\ 
&=\inf\{m\in \R \mid X+mU \in \A_{\rho_{\A,\M,\pi}}\}\\ 
&=\inf\{m\in \R \mid X+mU \in \A_{\rho_{\A,\M,\pi}} + \ker \pi\}\\ 
&=\rho_{\A_{\rho_{\A,\M,\pi}},\M,\pi}(X)
\end{align*}
for all $X\in \X$. Here, the last equation follows by the Reduction Lemma \ref{lem:reductionlemma}. Hence, (ii) holds.

\end{proof}

We can vary the acceptance set $\A$ in some range without changing the values of $\rho_{\A,\M,\pi}$, which is stated in the following lemma:

\begin{lemma}
Consider \emph{(FM)}. Let Assumption \ref{ann:u_payoff} be fulfilled by $U\in \M\cap \X_+$. Consider the functional $\rho_{\A,\M,\pi}\colon \X \rightarrow \overline{\R}$ given by \eqref{def:rho}. Then, $\rho_{\A,\M,\pi}=\rho_{\D,\M,\pi}$ for every set $\D\subseteq \X$ with 
\begin{align}\label{eq:D_as_variation_of_A}
\A+\ker \pi \subseteq \D+\ker \pi \subseteq \cl_{-U}(\A+\ker \pi). 
\end{align}
\end{lemma}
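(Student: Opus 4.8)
The plan is to express both $\rho_{\A,\M,\pi}$ and $\rho_{\D,\M,\pi}$ in the ``reduced'' form provided by the Reduction Lemma and then exploit the two inclusions in \eqref{eq:D_as_variation_of_A} one at a time. As noted in the discussion following Lemma~\ref{lem:reductionlemma}, the only ingredients of its proof are the representation $\rho_{\B,\M,\pi}(X)=\inf\{m\in\R\mid (X+\pi_m)\cap\B\neq\emptyset\}$ (immediate from the definition of $\rho_{\B,\M,\pi}$) together with the identity $\pi_m=mU+\ker\pi$ from Lemma~\ref{lem:pi_m_rewrite_U}; since neither uses that $\B$ is an acceptance set, the same computation applies verbatim with $\B=\D$. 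Using that $\ker\pi$ is a linear subspace (so $-\ker\pi=\ker\pi$), this gives, for every $X\in\X$,
\[
\rho_{\A,\M,\pi}(X)=\inf\{m\in\R\mid X+mU\in\A+\ker\pi\},\qquad
\rho_{\D,\M,\pi}(X)=\inf\{m\in\R\mid X+mU\in\D+\ker\pi\},
\]
with the infima taken in $\overline{\R}$.

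For the inequality $\rho_{\D,\M,\pi}(X)\le\rho_{\A,\M,\pi}(X)$ I would just use the left-hand inclusion $\A+\ker\pi\subseteq\D+\ker\pi$ of \eqref{eq:D_as_variation_of_A}: it implies $\{m\mid X+mU\in\A+\ker\pi\}\subseteq\{m\mid X+mU\in\D+\ker\pi\}$, and passing to infima reverses this inclusion. For the reverse inequality I would use the right-hand inclusion $\D+\ker\pi\subseteq\cl_{-U}(\A+\ker\pi)$. Fix $m\in\R$ with $X+mU\in\D+\ker\pi$; then $X+mU\in\cl_{-U}(\A+\ker\pi)$, so by Lemma~\ref{lem:k-directionally_cl_sequences} there is a sequence $(t_n)_{n\in\N}\subseteq\R_+$ with $t_n\downarrow 0$ and $X+mU+t_nU=X+(m+t_n)U\in\A+\ker\pi$ for all $n$. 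Hence $\rho_{\A,\M,\pi}(X)\le m+t_n$ for every $n$, and letting $n\to\infty$ gives $\rho_{\A,\M,\pi}(X)\le m$. Taking the infimum over all admissible $m$ yields $\rho_{\A,\M,\pi}(X)\le\rho_{\D,\M,\pi}(X)$ (the case where no such $m$ exists is trivial, the right-hand side being $+\infty$). Combining the two inequalities gives $\rho_{\A,\M,\pi}(X)=\rho_{\D,\M,\pi}(X)$ for all $X\in\X$, as claimed.

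The argument is short, so there is no genuine ``main obstacle''; the only point needing care is the legitimacy of the reduced representation for $\D$, which is not assumed to be an acceptance set, but this is exactly what the discussion after Lemma~\ref{lem:reductionlemma} supplies — alternatively one may compute $\rho_{\D,\M,\pi}(X)=\inf\{\pi(Z)\mid Z\in\M,\ X+Z\in\D\}=\inf\{m\in\R\mid (X+mU+\ker\pi)\cap\D\neq\emptyset\}=\inf\{m\in\R\mid X+mU\in\D+\ker\pi\}$ directly from Lemma~\ref{lem:pi_m_rewrite_U} and linearity of $\pi$. Note also that $U\in\rec(\A+\ker\pi)$ (Corollary~\ref{cor:U_rec_A}) is not needed above, though it would allow replacing the sequence argument by the cleaner equality $\cl_{-U}(\A+\ker\pi)=\{Y\in\X\mid Y+\R_>U\subseteq\A+\ker\pi\}$ from Lemma~\ref{lem:k-directionally_properties_K_in_minusrec}(i).
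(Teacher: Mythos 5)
Your proof is correct, but it takes a different route from the paper. The paper's argument stays at the level of sets: it applies the operator $\cl_{-U}$ to both inclusions in \eqref{eq:D_as_variation_of_A}, uses monotonicity of $\cl_{-U}$ and its idempotence (Lemma \ref{lem:k-directionally_properties}(ii)) to conclude $\cl_{-U}(\A+\ker\pi)=\cl_{-U}(\D+\ker\pi)$, and then invokes Theorem \ref{thm:rho_levelsets}(ii) to identify all sublevel sets of the two functionals, hence the functionals themselves. You instead work pointwise with the reduced infimum representation from the Reduction Lemma: the inequality $\rho_{\D,\M,\pi}\le\rho_{\A,\M,\pi}$ falls out of the left inclusion by monotonicity of the infimum, and the reverse inequality uses the sequence characterization of $\cl_{-U}$ (Lemma \ref{lem:k-directionally_cl_sequences}) to push the value down to $m$. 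The paper's version is shorter once Theorem \ref{thm:rho_levelsets} is available, but your version has a genuine advantage in rigor: Theorem \ref{thm:rho_levelsets} is stated under (FM), i.e.\ for acceptance sets, and $\D$ is not assumed to be one, so the paper's final step of applying Theorem \ref{thm:rho_levelsets}(ii) to $\rho_{\D,\M,\pi}$ is formally a gap that the reader must fill; you address exactly this point by checking that the reduced representation $\rho_{\D,\M,\pi}(X)=\inf\{m\in\R\mid X+mU\in\D+\ker\pi\}$ needs no acceptance-set properties of $\D$, which is all your argument uses. Both proofs are valid; yours is the more self-contained of the two.
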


\begin{proof}
 Consider $\D \subseteq \X$ fulfilling \eqref{eq:D_as_variation_of_A}. Then,
\begin{align*}
\cl_{-U}(\A+\ker \pi) \subseteq \cl_{-U}(\D+\ker \pi).
\end{align*}
Furthermore, we get 
\begin{align*}
\cl_{-U}(\D+\ker \pi) \subseteq \cl_{-U}(\cl_{-U}(\A+\ker \pi)) = \cl_{-U}(\A+\ker \pi)
\end{align*}
by \eqref{eq:D_as_variation_of_A} and Lemma \ref{lem:k-directionally_properties}(ii). Thus, 
\begin{align*}
\cl_{-U}(\A+\ker \pi) = \cl_{-U}(\D+\ker \pi).
\end{align*}
This yields
\begin{align*}
\cl_{-U}(\A+\ker \pi)-mU = \cl_{-U}(\D+\ker \pi)-mU
\end{align*}
for all $m\in \R$, i.e., $\lev_{\rho_{\A,\M,\pi},\leq}(m)=\lev_{\rho_{\D,\M,\pi},\leq}(m)$ by Theorem \ref{thm:rho_levelsets}(ii). As a result, we obtain $\rho_{\A,\M,\pi}=\rho_{\D,\M,\pi}$.
\end{proof}

As for every risk measure we are interested in finiteness of $\rho_{\A,\M,\pi}$. 

\begin{theorem}\label{thm:rho_set_of_finite_values}
Consider \emph{(FM)}. Let Assumption \ref{ann:u_payoff} be fulfilled by $U\in \M\cap \X_+$. Consider the functional $\rho_{\A,\M,\pi}\colon \X \rightarrow \overline{\R}$ given by \eqref{def:rho}. Then,
\begin{align}\label{eq:dom_rho}
\dom \rho_{\A,\M,\pi} = \A +\ker \pi + \R U = \A + \M.
\end{align}
Moreover, take $X\in \X$ arbitrary. Then,
\begin{align}\label{eq:rho_finite}
\rho_{\A,\M,\pi}(X)\in \R \qquad \Longleftrightarrow \qquad X\in\bd_{-U}(\A+\ker \pi)+\R U.
\end{align}
\end{theorem}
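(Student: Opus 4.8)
The plan is to establish the two equalities in \eqref{eq:dom_rho} and the equivalence \eqref{eq:rho_finite} by translating everything through the Reduction Lemma \ref{lem:reductionlemma} and the level-set description in Theorem \ref{thm:rho_levelsets}. For the domain, recall $\dom \rho_{\A,\M,\pi} = \{X \in \X \mid \rho_{\A,\M,\pi}(X) < +\infty\}$, and by the Reduction Lemma $\rho_{\A,\M,\pi}(X) < +\infty$ is equivalent to the existence of some $m \in \R$ with $X + mU \in \A + \ker\pi$, i.e. $X \in \A + \ker\pi - mU$ for some $m$. Taking the union over all $m \in \R$ gives $\dom \rho_{\A,\M,\pi} = \A + \ker\pi + \R U$ at once (note $\R U = -\R U$, so the sign is immaterial). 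For the second equality $\A + \ker\pi + \R U = \A + \M$, I would use the structure of $\M$: by Remark \ref{rem:ker_pi_neq_0} and Lemma \ref{lem:pi_m_rewrite_U}, $\M = \ker\pi \oplus \R U$ (since $\pi(U) = 1 \neq 0$, every $Z \in \M$ decomposes as $Z = (Z - \pi(Z)U) + \pi(Z)U$ with $Z - \pi(Z)U \in \ker\pi$), hence $\ker\pi + \R U = \M$ and therefore $\A + \ker\pi + \R U = \A + \M$.

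For the equivalence \eqref{eq:rho_finite}, the key point is that $\rho_{\A,\M,\pi}(X) \in \R$ means $\rho_{\A,\M,\pi}(X)$ is neither $+\infty$ nor $-\infty$. The case $\rho_{\A,\M,\pi}(X) < +\infty$ we have already characterized. It remains to rule out $-\infty$ and pin down the finite values. By translation invariance (Lemma \ref{lem:rho_riskmeasure}(ii)), $\rho_{\A,\M,\pi}(X) = m$ for a given $m \in \R$ iff $\rho_{\A,\M,\pi}(X + mU) = 0$, which by Theorem \ref{thm:rho_levelsets}(iii) is equivalent to $X + mU \in \bd_{-U}(\A + \ker\pi)$, i.e. $X \in \bd_{-U}(\A + \ker\pi) - mU$. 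Thus $\rho_{\A,\M,\pi}(X)$ takes a finite value iff $X \in \bigcup_{m \in \R}\bigl(\bd_{-U}(\A + \ker\pi) - mU\bigr) = \bd_{-U}(\A + \ker\pi) + \R U$, which is exactly the right-hand side of \eqref{eq:rho_finite}. One then has to check that this forces $\rho_{\A,\M,\pi}(X) \neq -\infty$: if $X = B - mU$ with $B \in \bd_{-U}(\A + \ker\pi)$, then $\rho_{\A,\M,\pi}(X) = m + \rho_{\A,\M,\pi}(B) = m + 0 = m \in \R$, using Theorem \ref{thm:rho_levelsets}(iii) with $m = 0$ to get $\rho_{\A,\M,\pi}(B) = 0$. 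Conversely, if $\rho_{\A,\M,\pi}(X) = m \in \R$, the argument above places $X$ in $\bd_{-U}(\A + \ker\pi) - mU$.

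The main obstacle I anticipate is making the ``$\Rightarrow$'' direction of \eqref{eq:rho_finite} fully rigorous — specifically, justifying that when $\rho_{\A,\M,\pi}(X)$ equals a real number $m$, the point $X + mU$ genuinely lands in the directional boundary rather than merely in the directional closure. This is where Theorem \ref{thm:rho_levelsets}(iii) does the real work: $\lev_{\rho_{\A,\M,\pi},=}(0) = \bd_{-U}(\A + \ker\pi)$ is precisely the statement that $\rho_{\A,\M,\pi}(Y) = 0 \iff Y \in \bd_{-U}(\A + \ker\pi)$, so the chain $\rho_{\A,\M,\pi}(X) = m \Rightarrow \rho_{\A,\M,\pi}(X + mU) = 0 \Rightarrow X + mU \in \bd_{-U}(\A + \ker\pi)$ is immediate once one invokes it. The subtlety is purely bookkeeping: one must keep the distinction between ``$\rho$ is finite'' (excludes $\pm\infty$) and ``$\rho < +\infty$'' (the domain) clearly in view, since the first equality \eqref{eq:dom_rho} is about the latter while \eqref{eq:rho_finite} is about the former, and the gap between them — points where $\rho_{\A,\M,\pi} = -\infty$ — corresponds exactly to $(\A + \ker\pi + \R U) \setminus (\bd_{-U}(\A + \ker\pi) + \R U)$.
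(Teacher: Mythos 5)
Your proposal is correct and follows essentially the same route as the paper: the domain identity via the Reduction Lemma together with the decomposition $Z=(Z-\pi(Z)U)+\pi(Z)U$ showing $\M=\ker\pi+\R U$, and the finiteness equivalence read off from Theorem \ref{thm:rho_levelsets}(iii) by taking the union of the level lines $\lev_{\rho_{\A,\M,\pi},=}(m)=\bd_{-U}(\A+\ker\pi)-mU$ over $m\in\R$. Your extra care in distinguishing $\rho<+\infty$ from $\rho\in\R$ is sound but not a departure; the paper simply states that \eqref{eq:rho_finite} ``follows directly'' from Theorem \ref{thm:rho_levelsets}(iii).
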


\begin{proof}
 The equivalence \eqref{eq:rho_finite} follows directly from Theorem \ref{thm:rho_levelsets}(iii). 
 
 So, we only have to show \eqref{eq:dom_rho}. We start with the first equation: Let $X\in\dom \rho_{\A,\M,\pi}$. By Reduction Lemma \ref{lem:reductionlemma}, there exists $m\in \R$ with $X+mU\in \A+\ker \pi$ or, equivalently, ${X\in \A+\ker \pi - mU}$. Thus, $X\in \A+\ker \pi + \R U$, showing
\begin{align*}
\dom \rho_{\A,\M,\pi} \subseteq \A +\ker \pi + \R U
\end{align*}
in \eqref{eq:dom_rho}. Conversely, let $X\in \A +\ker \pi + \R U$. Then, it exists $m\in \R$ with
\begin{align*}
X+ mU \in \A+ \ker \pi,
\end{align*}
which yields $\rho_{\A,\M,\pi} (X)\leq m < +\infty$ by Reduction Lemma \ref{lem:reductionlemma} and, thus, $X\in \dom \rho_{\A,\M,\pi}$. That completes the proof of the first equation in \eqref{eq:dom_rho}. 

In the second equation, 
\begin{align*}
\M \supseteq \ker \pi + \R U
\end{align*}
holds obviously, since $U\in \M$ and $\M$ being a subspace of $\X$. Now, let $Z\in \M$ arbitrary. Then, $\pi(Z)U\in \M$ and $Z-\pi(Z)U\in \M$. By linearity of $\pi$, 
\begin{align*}
\pi(Z-\pi(Z)U)=\pi(Z)-\pi(\pi(Z)U)=\pi(Z)-\pi(Z)\cdot 1=0,
\end{align*}
since $\pi(U)=1$ by Assumption \ref{ann:u_payoff}. Consequently, 
\begin{align*}
Z=(Z-\pi(Z)U)+\pi(Z)U \in \ker \pi + \R U,
\end{align*}
showing $\M\subseteq \ker \pi + \R U$, which complets the proof of \eqref{eq:dom_rho} by
\begin{align*}
\A+\ker \pi +\R U = \A + \M.
\end{align*}
\end{proof}

Farkas et al. \cite{Farkas.2015} observed the following lemma for topological vector spaces $\X$, which is proved in our paper \cite[Lemma 3.16]{Marohn.2020} without using any topological properties:
\begin{lemma}[see \cite{Farkas.2015}] \label{lem3-16}
Consider \emph{(FM)}. Let Assumption \ref{ann:u_payoff} be fulfilled by $U\in \M\cap \X_+$. Consider the functional $\rho_{\A,\M,\pi}\colon \X \rightarrow \overline{\R}$ given by \eqref{def:rho}. Furthermore, let $\A+\ker \pi = \X$. Then,  $\rho_{\A,\M,\pi}\equiv -\infty$.
\end{lemma}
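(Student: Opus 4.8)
The plan is to deduce the statement immediately from the Reduction Lemma \ref{lem:reductionlemma}. Since Assumption \ref{ann:u_payoff} is in force with $U\in\M\cap\X_+$, that lemma represents the risk measure as
\[
\rho_{\A,\M,\pi}(X)=\inf\{m\in\R\mid X+mU\in\A+\ker\pi\}\qquad\text{for every }X\in\X.
\]
First I would fix an arbitrary $X\in\X$ and observe that, because $\A+\ker\pi=\X$ by hypothesis, the containment $X+mU\in\A+\ker\pi$ holds for \emph{every} $m\in\R$. Hence the set over which the infimum is taken is all of $\R$, so $\rho_{\A,\M,\pi}(X)=\inf\R=-\infty$. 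Since $X$ was arbitrary, $\rho_{\A,\M,\pi}\equiv-\infty$ on $\X$.

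There is essentially no obstacle here; the only point worth noting is that assigning the value $-\infty$ is legitimate because $\rho_{\A,\M,\pi}$ is defined as a map into $\overline{\R}$, so in particular $\rho_{\A,\M,\pi}$ fails to be proper in this degenerate case.

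As an alternative route, one could instead invoke Theorem \ref{thm:rho_set_of_finite_values}: its first identity gives $\dom\rho_{\A,\M,\pi}=\A+\M\supseteq\A+\ker\pi=\X$, so $\rho_{\A,\M,\pi}$ never equals $+\infty$; and since $\bd_{-U}(\A+\ker\pi)=\bd_{-U}(\X)=\cl_{-U}(\X)\setminus\Int_{-U}(\X)=\X\setminus\X=\emptyset$, the equivalence \eqref{eq:rho_finite} rules out any finite value as well. Either way $\rho_{\A,\M,\pi}(X)=-\infty$ for all $X$, and I would present the Reduction-Lemma argument as the main proof because it is the shortest.
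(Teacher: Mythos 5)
Your argument is correct: since $\A+\ker\pi=\X$, the Reduction Lemma makes the infimum run over all of $\R$, giving $-\infty$ for every $X$, and the alternative route via Theorem \ref{thm:rho_set_of_finite_values} is also sound (indeed $\bd_{-U}(\X)=\emptyset$). The paper itself omits the proof, deferring to \cite{Farkas.2015} and \cite{Marohn.2020}, but your Reduction-Lemma argument is the immediate and standard one, so there is nothing to add.
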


We can also give a condition as in Lemma \ref{lem3-16} restricted to capital positions in the domain of $\rho_{\A,\M,\pi}$, which is related to the payoff $U$:

\begin{lemma}\label{lem:rho_infinite_by_U}
Consider \emph{(FM)}. Let Assumption \ref{ann:u_payoff} be fulfilled by $U\in \M\cap \X_+$. Consider the functional $\rho_{\A,\M,\pi}\colon \X \rightarrow \overline{\R}$ given by \eqref{def:rho}. Furthermore, let $-U\in \rec (\A)$ hold. Then,  $\rho_{\A,\M,\pi}(X) = -\infty$ for all $X\in \dom \rho_{\A,\M,\pi}$.
\end{lemma}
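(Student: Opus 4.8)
The plan is to combine the Reduction Lemma \ref{lem:reductionlemma} with the observation that, once $-U$ belongs to the recession cone of $\A$, it also belongs to the recession cone of the enlarged set $\A+\ker\pi$; then moving in the direction $-U$ never leaves $\A+\ker\pi$, which drives the infimum defining $\rho_{\A,\M,\pi}$ down to $-\infty$. Concretely, first I would invoke Corollary \ref{cor:U_rec_A}: the hypothesis $-U\in\rec(\A)$ gives $-U\in\rec(\A+\ker\pi)$, while $U\in\rec(\A+\ker\pi)$ holds automatically. Thus $\R U$ is a lineality direction of $\A+\ker\pi$, i.e. $\A+\ker\pi+\R U=\A+\ker\pi$.

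Next I would fix an arbitrary $X\in\dom\rho_{\A,\M,\pi}$. Since $\rho_{\A,\M,\pi}(X)<+\infty$, the Reduction Lemma \ref{lem:reductionlemma} (equivalently, the first equation of Theorem \ref{thm:rho_set_of_finite_values}) yields some $m_0\in\R$ with $X+m_0U\in\A+\ker\pi$. Because $-U\in\rec(\A+\ker\pi)$, for every $\lambda\in\R_+$ we get $(X+m_0U)+\lambda(-U)=X+(m_0-\lambda)U\in\A+\ker\pi$. Hence $X+mU\in\A+\ker\pi$ for every $m\le m_0$, and the Reduction Lemma then gives $\rho_{\A,\M,\pi}(X)\le m$ for every such $m$. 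Since this holds for arbitrarily negative $m$, we conclude $\rho_{\A,\M,\pi}(X)=-\infty$, which is the claim.

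An alternative route goes through Theorem \ref{thm:rho_set_of_finite_values}: applying Lemma \ref{lem:k-directionally_properties_K_in_minusrec}(iii) to $\A+\ker\pi$ with $K=-U$ (admissible because $U\in\rec(\A+\ker\pi)$ by Corollary \ref{cor:U_rec_A}) describes $\bd_{-U}(\A+\ker\pi)$ as the set of $X$ with $X+tU\in\A+\ker\pi$ and $X-tU\notin\A+\ker\pi$ for all $t\in\R_>$; but $-U\in\rec(\A+\ker\pi)$ makes these two conditions incompatible, so $\bd_{-U}(\A+\ker\pi)=\emptyset$, and \eqref{eq:rho_finite} then shows $\rho_{\A,\M,\pi}$ is nowhere finite, hence identically $-\infty$ on its domain.

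I do not expect a serious obstacle here; the only points requiring care are (a) correctly invoking Corollary \ref{cor:U_rec_A} to transfer the recession-cone property from $\A$ to $\A+\ker\pi$, and (b) using the definition of the domain to obtain the single starting point $X+m_0U\in\A+\ker\pi$, after which the recession direction $-U$ does all the work.
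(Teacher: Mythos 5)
Your main argument is correct and is essentially the paper's own proof: pick $X$ in the domain, use the Reduction Lemma to find $m_0$ with $X+m_0U\in\A+\ker\pi$, and then let $-U\in\rec(\A+\ker\pi)$ (via Lemma \ref{lem:rec_A}/Corollary \ref{cor:U_rec_A}) push the infimum to $-\infty$. The alternative route through $\bd_{-U}(\A+\ker\pi)=\emptyset$ and \eqref{eq:rho_finite} is a valid bonus observation, but the core reasoning coincides with the paper's.
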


\begin{proof}
Let $X\in \dom \rho_{\A,\M,\pi}$ arbitrary. By the Reduction Lemma \ref{lem:reductionlemma}, it exists $m\in \R$ with $X+mU\in \A+\ker \pi$ or, equivalently, $X\in \A+\ker \pi - mU$. Since $-U\in \rec (\A)$, we have 
\begin{align*}
X\in \A+\ker \pi - tU \text{ for every } t\leq m
\end{align*}
and, thus, $\rho_{\A,\M,\pi}(X)\leq t$ for all $t\leq m$, which shows $\rho_{\A,\M,\pi}(X)=-\infty$. 
\end{proof}

\begin{remark}
Under our assumptions, $U\in \rec (\A)$ holds, see Remark \ref{rem:acceptance_sets}. So, we suppose in Lemma \ref{lem:rho_infinite_by_U} $U\in \rec (\A)$  and $-U\in \rec (\A)$.
\end{remark}

\begin{remark}
Note that $-U\in \rec(\A)$ is not necessary for $\A+\ker \pi=\X$, although it holds $U\in \rec(\A)$ for $U\in\M\cap\X_+$ arbitrary. For example, let $\X=\M=\R^2$, $\pi(Z_1,Z_2)=\frac{Z_1+Z_2}{2}$ and
\begin{align*}
\A_1&=\left\{(X_1,X_2)^T\in \X \mid X_1\leq 0, X_2\geq \sqrt{-X_1}\right\} \cup \left\{(X_1,X_2)^T\in \X \mid X_1>0, X_2=0\right\}.
\end{align*}
Then, $-U\notin \rec(\A_1)$ for every $U\in \M\cap \X_+$, but $\A_1+\ker \pi = \X$.

In general, $-U\in \rec(\A)$ is not sufficient for $\A+\ker \pi=\X$, as well. Consider $\X=\R^3$ and $\M=\{0\}\times\R\times\R$. Let $U=(0,0,1)^T$ and 
\begin{align*}
\A_2=\R U + \R^3_+.
\end{align*}
Then, $-U\in \rec(\A_2)$ and $U\in \rec(\A_2)$. Consider $\pi\colon \M\rightarrow \R$ with $\pi(Z)=\pi(Z_1,Z_2,Z_3):=Z_3$. Then,
\begin{align*}
\A_2+\ker \pi = \A_2 + \R (0,1,0)^T = \{(X_1,X_2,X_3)^T\in \R^3\mid X_1\geq 0\},
\end{align*}
i.e., $\A_2+\ker \pi \neq \R^3=\X$. However, $-U\in \rec(\A)$ is sufficient for $\A+\ker \pi = \X$ if we require  $n:=\dim(\X)<+\infty$ and $\dim(\ker \pi)=n-1$. The reason is that the subspaces $\ker \pi$ and $\R U$ of $\M\subseteq \X$ fulfill $\ker \pi \cap \R U = \{0\}$ and, thus, for their direct sum $\dim(\ker \pi + \R U) = n$ holds.

In general, Lemma \ref{lem:rho_infinite_by_U} secures finiteness only for $X\in \dom \rho_{\A,\M,\pi}$ instead of the whole space $\X$. 
\end{remark}

\begin{remark}
Under $\A + \ker \pi \neq \X$, it is impossible to make every capital position acceptable by zero costs, which is also called \textit{absense of acceptability arbitrage}, see Artzner et al. in \cite{Artzner.2009}. In topological vector spaces $\X$ Baes et al. \cite[Prop. 2.10]{Baes.2020} and Farkas et al. \cite{Farkas.2015} observe different sufficient conditions for $\rho_{\A,\M,\pi}$ being finite and continuous if $\A+\ker \pi \neq \X$ is fulfilled, e.g. $\Int \X_+\cap \M \neq \emptyset$. Also, we refer to \cite[Section 3]{Farkas.2015} for specific conditions for finiteness of $\rho_{\A,\M,\pi}$ under certain properties of the acceptance set like being convex or coherent.
\end{remark}

The observations for $\rho_{\A,\M,\pi}\equiv -\infty$ lead to the following equivalence, which gives more details:

\begin{theorem}
Consider \emph{(FM)}. Let Assumption \ref{ann:u_payoff} be fulfilled by $U\in \M\cap \X_+$. Consider the functional $\rho_{\A,\M,\pi}\colon \X \rightarrow \overline{\R}$ given by \eqref{def:rho}. Then, $\rho_{\A,\M,\pi}$ is proper if and only if $\A+\ker \pi$ does not contain lines parallel to $U$, i.e.,
\begin{align}\label{eq:parallel_lines_to_U}
X+\R U \not\subseteq \A+\ker \pi \qquad \text{ for all } X\in \A+\ker\pi.
\end{align}
\end{theorem}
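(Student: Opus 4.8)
The plan is to reduce properness of $\rho_{\A,\M,\pi}$ to the question of when $\rho_{\A,\M,\pi}$ attains the value $-\infty$, and then to characterize that event via lines parallel to $U$.

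\textbf{Step 1: properness reduces to finiteness from below.} By definition (Section 2), $\rho_{\A,\M,\pi}$ is proper iff $\dom \rho_{\A,\M,\pi} \neq \emptyset$ and $\rho_{\A,\M,\pi}(X) > -\infty$ for all $X \in \X$. The first condition always holds: since $0 \in \A$ by Definition \ref{def:acceptanceset}, we have $\rho_{\A,\M,\pi}(0) \leq 0$ (as noted at the end of Section 4, because $0 \in \M$ and $\pi(0)=0$), so $0 \in \dom \rho_{\A,\M,\pi}$; alternatively one may invoke $\dom \rho_{\A,\M,\pi} = \A + \M \ni 0$ from Theorem \ref{thm:rho_set_of_finite_values}. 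Hence properness of $\rho_{\A,\M,\pi}$ is equivalent to: $\rho_{\A,\M,\pi}(X) > -\infty$ for every $X \in \X$.

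\textbf{Step 2: the key lemma.} I would show that for every $X \in \X$,
\[
\rho_{\A,\M,\pi}(X) = -\infty \quad \Longleftrightarrow \quad X + \R U \subseteq \A + \ker \pi .
\]
The implication ``$\Leftarrow$'' is immediate from the Reduction Lemma \ref{lem:reductionlemma}: if $X + mU \in \A + \ker \pi$ for every $m \in \R$, then $\rho_{\A,\M,\pi}(X) = \inf\{m \in \R \mid X + mU \in \A + \ker \pi\} = \inf \R = -\infty$. For ``$\Rightarrow$'', if $\rho_{\A,\M,\pi}(X) = -\infty$ then the Reduction Lemma yields a sequence $(m_n)_{n\in\N}$ with $m_n \downarrow -\infty$ and $X + m_n U \in \A + \ker \pi$ for all $n$. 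Given an arbitrary $m \in \R$, pick $n$ with $m_n \leq m$ and write $X + mU = (X + m_n U) + (m - m_n)U$; since $m - m_n \geq 0$ and $U \in \rec(\A + \ker \pi)$ by Corollary \ref{cor:U_rec_A} (valid because $U \in \M \cap \X_+$), this element lies in $\A + \ker \pi$. As $m$ was arbitrary, $X + \R U \subseteq \A + \ker \pi$.

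\textbf{Step 3: assembling the equivalence.} Combining Steps 1 and 2, $\rho_{\A,\M,\pi}$ is proper iff no $X \in \X$ satisfies $X + \R U \subseteq \A + \ker \pi$. It remains to see that this is exactly \eqref{eq:parallel_lines_to_U}, i.e.\ that quantifying over $X \in \X$ is the same as quantifying over $X \in \A + \ker \pi$: trivially the latter is weaker, and conversely if some $X \in \X$ had $X + \R U \subseteq \A + \ker \pi$ then in particular $X = X + 0\cdot U \in \A + \ker \pi$, so such an $X$ would already violate \eqref{eq:parallel_lines_to_U}. This finishes the proof.

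The argument has no serious obstacle; the only point requiring care is the ``$\Rightarrow$'' direction of Step 2, where one must upgrade membership of $X + m_nU$ in $\A + \ker\pi$ along a sequence $m_n \to -\infty$ to membership of $X + mU$ for \emph{every} real $m$. This is precisely where the recession property $U \in \rec(\A + \ker \pi)$ enters: it forces the set $\{m \in \R \mid X + mU \in \A + \ker\pi\}$ to be an upward-closed subset of $\R$, so an infimum of $-\infty$ can only occur when that set is all of $\R$.
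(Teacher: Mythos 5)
Your proof is correct and follows essentially the same route as the paper: both arguments reduce everything to the one-dimensional set $\{m\in\R\mid X+mU\in\A+\ker\pi\}$ via the Reduction Lemma and exploit that $U\in\rec(\A+\ker\pi)$ makes this set upward closed, so that $\rho_{\A,\M,\pi}(X)=-\infty$ exactly when the whole line $X+\R U$ lies in $\A+\ker\pi$. Your version merely packages this as an explicit key lemma and handles the quantifier range ($X\in\X$ versus $X\in\A+\ker\pi$) a bit more carefully than the paper does.
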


\begin{proof}
First, we note that \eqref{eq:parallel_lines_to_U} is equivalent to
\begin{align*}
\forall X\in \A+\ker \pi \ \exists m\in \R:\ X+mU \notin \A+\ker \pi. 
\end{align*}
Let $\rho_{\A,\M,\pi}$ be proper. Then, $\rho_{\A,\M,\pi}(X)>-\infty$ for all $X\in \X$, i.e., it exists $m\in \R$ with 
\begin{align*}
X+tU\notin \A+\ker \pi \text{ for all } t<m
\end{align*}
by the Reduction Lemma \ref{lem:reductionlemma}. Hence,  \eqref{eq:parallel_lines_to_U} holds.

Conversely, let \eqref{eq:parallel_lines_to_U} be fulfilled. Thus, if $X+mU\in \A+\ker \pi$,  there is some $t\in \R_>$ with 
\begin{align*}
X+(m-t)U\notin \A + \ker \pi, 
\end{align*}
i.e., $\rho_{\A,\M,\pi}(X)>-\infty$. Furthermore, since $0\in \A$ by Definition \ref{def:acceptanceset}(i) and, thus, $0\in\A+\ker \pi$ holds, it is $\rho_{\A,\M,\pi}(0)<+\infty$. Hence, $\dom \rho_{\A,\M,\pi}\neq \emptyset$, i.e., $\rho_{\A,\M,\pi}$ is proper.  
\end{proof}

\begin{remark}
In \eqref{eq:parallel_lines_to_U}, $X\in \A+\ker \pi$ can be replaced by $X\in \X$. Note that $-U\notin \rec(\A)$ is necessary for \eqref{eq:parallel_lines_to_U} because if $-U\in \rec(\A)$ holds for $U\in \M\cap \X_+$ fulfilling Assumption \ref{ann:u_payoff}, we have \begin{align*}
X-mU\in \A\subseteq \A+\ker \pi
\end{align*}
for every $X\in \A$ and $m\in \R_+$, which contradicts \eqref{eq:parallel_lines_to_U}, since $X+mU\in \A+\ker \pi$ for every $m\in \R$, too, see Corollary \ref{cor:U_rec_A}. On the other hand, Example \ref{expl:minus_U_in_rec_A_plus_ker_pi_not_A} shows that $-U\notin \rec(\A)$ is not sufficient for \eqref{eq:parallel_lines_to_U}. 
\end{remark}

\bigskip
The following lemma from \cite[Lemma 2.8]{Farkas.2015} summarizes some more properties of $\rho_{\A,\M,\pi}$ that imply conditions under which $\rho_{\A,\M,\pi}$ is a convex or coherent risk measure (see Remark \ref{rem:rho_normalized}). The proof can be found in \cite[Lemma 3.20]{Marohn.2020}, which does not require any topological properties (compare \cite[Theorem 2.3.1]{Gopfert.2003}).

\begin{lemma}[see \cite{Farkas.2015}, Lemma 2.8]\label{lem:further_properties_of_rho}
Consider \emph{(FM)}. Let Assumption \ref{ann:u_payoff} be fulfilled. Consider the functional $\rho_{\A,\M,\pi}\colon \X \rightarrow \overline{\R}$ given by \eqref{def:rho}. Then,
$\rho_{\A,\M,\pi}$  satisfies the following properties:
\begin{enumerate}[font = \normalfont]
\item $\rho_{\A,\M,\pi}$ is convex if $\A$ is convex,
\item $\rho_{\A,\M,\pi}$ is subadditive if $\A$ is closed under addition, i.e., $X+Y\in \A$ for all $X,Y\in \A$,
\item $\rho_{\A,\M,\pi}$ is positively homogeneous if $\A$ is a cone.
\end{enumerate}
\end{lemma}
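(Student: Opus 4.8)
The plan is to derive all three properties directly from the infimum representation \eqref{def:rho}, using only that $\M$ is a linear subspace of $\X$ (so that linear combinations of admissible movements are again admissible movements), that $\pi$ is linear, and the respective structural hypothesis on $\A$. Throughout write $\rho$ for $\rho_{\A,\M,\pi}$. For each item I would fix the relevant points of $\X$, pick movements $Z\in\M$ witnessing the infima up to $\varepsilon$, combine them, and pass to the infimum; the cases where an involved value equals $+\infty$ make the asserted inequality vacuous, and where an infimum equals $-\infty$ one simply replaces the $\varepsilon$-optimal movements by movements of arbitrarily large negative price, so below I may assume all relevant values are real.

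\emph{(i) Convexity.} Fix $X,Y\in\X$, $\lambda\in[0,1]$, and $\varepsilon>0$; choose $Z_X,Z_Y\in\M$ with $X+Z_X\in\A$, $Y+Z_Y\in\A$, $\pi(Z_X)\le\rho(X)+\varepsilon$, $\pi(Z_Y)\le\rho(Y)+\varepsilon$. Then $\lambda Z_X+(1-\lambda)Z_Y\in\M$ and
\[
\bigl(\lambda X+(1-\lambda)Y\bigr)+\bigl(\lambda Z_X+(1-\lambda)Z_Y\bigr)=\lambda(X+Z_X)+(1-\lambda)(Y+Z_Y)\in\A
\]
by convexity of $\A$, so linearity of $\pi$ yields $\rho\bigl(\lambda X+(1-\lambda)Y\bigr)\le\lambda\pi(Z_X)+(1-\lambda)\pi(Z_Y)\le\lambda\rho(X)+(1-\lambda)\rho(Y)+\varepsilon$; now let $\varepsilon\downarrow0$.

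\emph{(ii)--(iii).} For (ii), with $Z_X,Z_Y$ as above, $Z_X+Z_Y\in\M$ and $(X+Y)+(Z_X+Z_Y)=(X+Z_X)+(Y+Z_Y)\in\A+\A\subseteq\A$ because $\A$ is closed under addition, whence $\rho(X+Y)\le\pi(Z_X)+\pi(Z_Y)\le\rho(X)+\rho(Y)+2\varepsilon$; let $\varepsilon\downarrow0$. For (iii) and $\lambda>0$ I substitute $Z=\lambda W$ in \eqref{def:rho}: $W\mapsto\lambda W$ is a bijection of the subspace $\M$, and since $\A$ is a cone one has $\lambda X+\lambda W=\lambda(X+W)\in\A\Longleftrightarrow X+W\in\A$; hence $\rho(\lambda X)=\inf\{\pi(\lambda W)\mid W\in\M,\ X+W\in\A\}=\lambda\inf\{\pi(W)\mid W\in\M,\ X+W\in\A\}=\lambda\rho(X)$. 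For $\lambda=0$ one has $\rho(0)\le\pi(0)=0$ since $0\in\M\cap\A$, and $\rho(0)=0\cdot\rho(X)$ holds under the convention $0\cdot(\pm\infty)=0$ (indeed $\rho(0)=0$ whenever $\A$ admits no good deals, cf.\ Remark~\ref{rem:rho_normalized}).

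These arguments are routine, so there is no serious obstacle; the only points requiring care are the bookkeeping for the extended-real values $\pm\infty$, which has to be folded into the choice of witnessing movements, and, in (iii), the degenerate case $\lambda=0$, which needs the convention $0\cdot(\pm\infty)=0$ or a normalization hypothesis on $\A$. An alternative route uses the Reduction Lemma~\ref{lem:reductionlemma} to write $\rho=\varphi_{\A+\ker\pi,-U}$ and then reads (i)--(iii) off the classical properties of such infimum/translation-invariant functionals (cf.\ \cite[Theorem~2.3.1]{Gopfert.2003}); this trades the manipulations above for checking that convexity, closedness under addition, and the cone property pass from $\A$ to $\A+\ker\pi$, which is immediate since $\ker\pi$ is a linear subspace.
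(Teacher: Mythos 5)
Your argument is correct and complete where it matters. The paper itself does not prove this lemma inline: it cites Farkas et al.\ and its companion paper \cite{Marohn.2020} (Lemma 3.20), pointing to \cite[Theorem 2.3.1]{Gopfert.2003} for the underlying mechanism, i.e.\ essentially the second route you sketch at the end (pass to $\varphi_{\A+\ker\pi,-U}$ via the Reduction Lemma and read the properties off the Gerstewitz functional, after checking that convexity, additive closedness and the cone property survive adding the subspace $\ker\pi$). Your primary, self-contained route --- $\varepsilon$-optimal movements $Z_X,Z_Y\in\M$, combined using that $\M$ is a subspace and $\pi$ is linear, with the structural hypothesis on $\A$ absorbing the combined position --- is the standard direct verification and is sound, including the bookkeeping for $\pm\infty$ (your $\varepsilon$-argument does yield convexity of $\epi\rho$ in the paper's sense, since for $(X,t),(Y,s)\in\epi\rho$ one can always pick witnesses with $\pi(Z_X)\le t+\varepsilon$ and $\pi(Z_Y)\le s+\varepsilon$). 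The one genuine subtlety is the $\lambda=0$ case of positive homogeneity, and you are right to flag it: with the paper's definition \eqref{eq:rho_pos_hom} quantifying over $\lambda\in\R_+$ (which includes $0$), one needs $\rho_{\A,\M,\pi}(0)=0$, whereas for a conic $\A$ one only gets $\rho_{\A,\M,\pi}(0)\in\{0,-\infty\}$ from the definition (the value $-\infty$ occurring precisely when some $Z\in\A\cap\M$ has $\pi(Z)<0$, i.e.\ when Assumption \ref{ann:absence_good_deals} fails); so either that normalization hypothesis or the restriction to $\lambda>0$ is needed, exactly as you say. This caveat is consistent with Remark \ref{rem:def_riskmeasure} of the paper and does not affect the substance of the lemma.
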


\section{Conclusion}

In our paper, we studied properties of a risk measure $\rho_{\A,\M,\pi}$ associated with a not necessary closed acceptance set $\A\subseteq \X$, a space of eligible payoffs $\M\subseteq \X$ and a pricing functional $\pi\colon \M \rightarrow \R$. The study of $\rho_{\A,\M,\pi}$ was motivated by Baes et al. in \cite{Baes.2020} where solutions of an optimization problem referring to $\rho_{\A,\M,\pi}$ were subject to the investigation. $\rho_{\A,\M,\pi}$ is a monetary risk measure and, therefore, translation invariant. As seen in the paper, it is suitable for scalarization in the frame of multiobjective optimization because it is directly connected with the functional given in \eqref{funcak00}, which plays an important role in optimization. We have shown important properties of the translation invariant functional $\rho_{\A,\M,\pi}$. Especially, we studied the properties of the sublevel sets, strict sublevel sets and level lines of $\rho_{\A,\M,\pi}$. Furthermore, we discussed the finiteness of the functional and relaxed closedness assumptions concerning $\A+\ker \pi$.

For further research, it would be interesting to consider general acceptance sets and use the properties of $\rho_{\A,\M,\pi}$ derived in this paper for studying the optimization problem of making the initial capital position acceptable with minimal costs for general acceptance sets using a scalarization approach by means of our functional $\rho_{\A,\M,\pi}$. 

\bigskip

\printbibliography 


%
%
%
%
%
%
%
%
%
%
%
%
%
%
%


\end{document}